\newcommand{\cB}{\mathcal{B}}
\newcommand{\cT}{\mathcal{T}}
\newcommand{\cA}{\mathcal{Y}}
\newcommand{\cR}{\mathcal{R}}
\newcommand{\cG}{\mathcal{G}}
\newcommand{\ELL}{\mathscr{L}}
\newcommand{\cK}{\mathsf{Gap}}
\newcommand{\DD}{\mathcal{D}}
\newcommand{\R}{\mathbb{R}}
\newcommand{\Z}{\mathbb{Z}}
\newcommand{\E}{\mathcal{E}}
\newcommand{\EE}{\widetilde{\mathcal{E}}_J}
\newcommand{\BCE}{\mathsf{BCE}}
\newcommand{\Bad}{\mathsf{Bad}}
\newcommand{\Ch}{\mathsf{Ch}}
\newcommand{\Tree}{\mathsf{Tree}}
\newcommand{\Top}{\mathsf{Top}}
\newcommand{\one}{\mathds{1}}
\newcommand\restr[2]{{
		\left.\kern-\nulldelimiterspace 
		#1 
		\right|_{#2} 
}}
\newcommand{\HH}{\mathcal{H}^1}
\newcommand{\TT}{\mathbb{T}}
\DeclareMathOperator{\lip}{Lip}
\DeclareMathOperator{\spn}{span}
\DeclareMathOperator{\diam}{diam}
\DeclareMathOperator{\dist}{dist}
\DeclareMathOperator{\Fav}{Fav}
\newtheorem{theorem}{Theorem}
\newtheorem{theoremalph}{Theorem}
\newtheorem*{theorem*}{Theorem}
\newtheorem{lemma}[theorem]{Lemma}
\newtheorem*{lemma*}{Key Geometric Lemma}
\newtheorem{cor}[theorem]{Corollary}
\newtheorem{prop}[theorem]{Proposition}
\newtheorem{conjecture}[theorem]{Conjecture}
\newtheorem{question}[theorem]{Question}
\theoremstyle{remark}
\newtheorem{definition}[theorem]{Definition}
\newtheorem*{definition*}{Definition}
\newtheorem{remark}[theorem]{Remark}
\newtheorem{observation}[theorem]{Observation}
\numberwithin{equation}{section}
\numberwithin{theorem}{section}
\title[Quantitative Besicovitch projection theorem...]{Quantitative Besicovitch projection theorem for irregular sets of directions}
\subjclass[2010]{28A75 (primary) 28A78 (secondary)}
\keywords{Favard length, Besicovitch projection theorem, quantitative rectifiability, Lipschitz graph}
\author[D. D\k{a}browski]{Damian D\k{a}browski}
\address{Department of Mathematics and Statistics\\ University of Jyv\"askyl\"a,
	P.O. Box 35 (MaD)\\
	FI-40014 University of Jyv\"askyl\"a\\
	Finland}
\email{damian.m.dabrowski@jyu.fi}
\begin{document}
	\maketitle
	\begin{abstract}
		The classical Besicovitch projection theorem states that if a planar set $E$ with finite length is purely unrectifiable, then almost all orthogonal projections of $E$ have zero length. We prove a quantitative version of this result: if $E\subset\R^2$ is AD-regular and there exists a set of direction $G\subset \mathbb{S}^1$ with $\mathcal{H}^1(G)\gtrsim 1$ such that for every $\theta\in G$ we have $\|\pi_\theta\mathcal{H}^1|_E\|_{L^{\infty}}\lesssim 1$, then a big piece of $E$ can be covered by a Lipschitz graph $\Gamma$ with $\lip(\Gamma)\lesssim 1$. The main novelty of our result is that the set of good directions $G$ is assumed to be merely measurable and large in measure, while previous results of this kind required $G$ to be an arc.
		
		As a corollary, we obtain a result on AD-regular sets which avoid a large set of directions, in the sense that the set of directions they span has a large complement. It generalizes the following easy observation: a set $E$ is contained in some Lipschitz graph if and only if the complement of the set of directions spanned by $E$ contains an arc.
	\end{abstract}
	\tableofcontents

	\section{Introduction}
	\subsection{Besicovitch projection theorem}
	A Borel set $E\subset\R^2$ is said to be \emph{purely unrectifiable} if for any (1-dimensional) Lipschitz graph $\Gamma\subset \R^2$ we have
	\begin{equation*}
		\HH(E\cap\Gamma)=0.
	\end{equation*}

	One of the fundamental results of geometric measure theory is the Besicovitch projection theorem, which states that if $E\subset\R^2$ is purely unrectifiable and $\HH(E)<\infty$, then almost all orthogonal projections of $E$ have zero length. We reformulate this result below in a way that is more suitable for the purpose of this article. 
	
	Let $\TT\coloneqq\R/\Z$, and for $\theta\in\TT$ we set $e_\theta\coloneqq (\cos(2\pi\theta),\,\sin(2\pi\theta)),$ and $\pi_\theta(x) \coloneqq e_\theta \cdot x$, so that $\pi_\theta:\R^2\to\R$ is the orthogonal projection map to the line $\ell_\theta\coloneqq\spn(e_\theta)$.
	
	\begin{definition}
		Given a Borel set $E\subset\R^2$, we define its \emph{Favard length} (also known as its \emph{Buffon's needle probability}) as
		\begin{equation*}
		\Fav(E) = \int_0^1 \HH(\pi_\theta(E))\, d\theta.
		\end{equation*}
	\end{definition}
	\begin{theoremalph}[\cite{besicovitch1939on}]\label{thm:BesFed}
		Let $E\subset\R^2$ be an $\HH$-measurable set with $0<\HH(E)<\infty$. Suppose that $\Fav(E)>0$.
		Then, there exists a Lipschitz graph $\Gamma$ such that
		\begin{equation*}
			\HH(\Gamma\cap E)>0.
		\end{equation*}
	\end{theoremalph}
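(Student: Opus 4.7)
The plan is to reduce the statement to the classical dichotomy between rectifiable and purely unrectifiable sets, and then to invoke the ``hard'' direction of Besicovitch's projection theorem. First, since $\mathcal{H}^1(E)<\infty$, I would apply Besicovitch's decomposition theorem to write $E = E_r \cup E_u$ (disjoint union), where $E_r$ is $1$-rectifiable in the strong sense that $\mathcal{H}^1(E_r \setminus \bigcup_i \Gamma_i) = 0$ for some countable family of Lipschitz graphs $\{\Gamma_i\}_i$, and $E_u$ is purely unrectifiable in the sense recalled at the start of Section 1.1. This reduction is purely structural and standard.

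The heart of the proof is to show that $\Fav(E_u) = 0$. I would follow Besicovitch's original density-based strategy. At $\mathcal{H}^1$-a.e.\ $x \in E_u$, the lower density of $\mathcal{H}^1|_{E_u}$ is strictly smaller than its upper density (in fact bounded above by $3/4$), by Besicovitch's density theorem characterizing irregular sets. This density deficit forces $E_u$ near such a typical $x$ not to concentrate along any single direction: for a.e.\ direction $\theta$, a definite proportion of $E_u$ locally near $x$ is mapped on top of itself by $\pi_\theta$, creating multiplicity in the projection. Quantifying this at a suitably chosen small scale using a Vitali-type covering of $E_u$, and then applying Fubini to swap the $\theta$- and $x$-integrations, one concludes that $\int \mathcal{H}^1(\pi_\theta(E_u))\, d\theta = 0$.

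Once $\Fav(E_u) = 0$ is established, the conclusion is immediate. From the inclusion $\pi_\theta(E) \subset \pi_\theta(E_r) \cup \pi_\theta(E_u)$ and subadditivity of $\mathcal{H}^1$ on the line, integration in $\theta$ yields $\Fav(E) \leq \Fav(E_r) + \Fav(E_u) = \Fav(E_r)$, so $\Fav(E_r) > 0$ and in particular $\mathcal{H}^1(E_r) > 0$. A countable pigeonhole over the covering $\{\Gamma_i\}$ then produces a single Lipschitz graph $\Gamma_i$ with $\mathcal{H}^1(\Gamma_i \cap E) \geq \mathcal{H}^1(\Gamma_i \cap E_r) > 0$. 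The main obstacle is clearly the middle step: the other two are organizational, while the density/tangent analysis required to show $\Fav(E_u) = 0$ is the technically delicate part of Besicovitch's argument, with no obvious shortcut avoiding either a blow-up/tangent-measure analysis or a direct covering estimate quantifying how $E_u$ spreads transversally at most scales.
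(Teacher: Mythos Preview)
The paper does not actually prove Theorem~A; it is stated as a classical result attributed to Besicovitch, with a pointer to \cite{besicovitch1939on} and to \cite[Theorem~18.1]{mattila1999geometry} for a modern treatment. Your outline is a faithful sketch of the classical argument as presented in those references: the decomposition $E = E_r \cup E_u$ into rectifiable and purely unrectifiable parts, the hard step $\Fav(E_u)=0$ via Besicovitch's density theorems, and the final pigeonhole over the Lipschitz graphs covering $E_r$. Since there is no in-paper proof to compare against, there is nothing further to add beyond noting that your middle paragraph is, as you acknowledge, only a gesture at the substantial work in Besicovitch's argument rather than a proof.
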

	The planar result stated above is due to Besicovitch \cite{besicovitch1939on}, see \cite[Theorem 18.1]{mattila1999geometry} for a modern reference. A higher dimensional counterpart of \thmref{thm:BesFed}, dealing with $n$-dimensional subsets of $\R^d$, was shown by Federer \cite{federer1947varphi}, see also an alternative proof due to White \cite{white1998newproof}. In this paper we will only be concerned with 1-dimensional subsets of $\R^2$.
	
	Note that \thmref{thm:BesFed} is a purely qualitative result: it gives no estimate on the size of $\HH(\Gamma\cap E)$, nor on the Lipchitz constant of $\Gamma$. In the last thirty years many classical definitions and results of geometric measure theory have been quantified (see e.g. \cite{jones1990rectifiable,david1991singular,david1993analysis,azzam2015characterization,tolsa2014rectifiability,tolsa2017rectifiable}), finding applications in PDEs and harmonic analysis (see e.g. \cite{david1998unrectifiable,tolsa2003painleve,tolsa2005bilipschitz,nazarov2014on,azzam2016rectifiability,azzam2020harmonic}). However, obtaining a quantitative counterpart to \thmref{thm:BesFed} proved to be a notoriously difficult problem. Beyond its intrinsic appeal, this question is closely related to \emph{Vitushkin's conjecture}, which we briefly discuss in Subsection \ref{sec:Vitushkin}.
	
	The problem of quantifying \thmref{thm:BesFed} has seen a number of breakthroughs in the last few years \cite{martikainen2018characterising, chang2017analytic, orponen2020plenty}, which we will discuss shortly. In this article we make further progress on this question.
	
	\subsection{Quantifying Besicovitch projection theorem} In order to state our result, we need to quantify the finite length assumption of \thmref{thm:BesFed}.
	\begin{definition}
		We say that a set $E\subset\R^2$ is Ahlfors-David-regular, or AD-regular, if $E$ is closed and there exists a constant $C\ge 1$ such that for all $x\in E$ and $0<r<\diam(E)$
		\begin{equation*}
			C^{-1} r\le\HH(E\cap B(x,r))\le C r.
		\end{equation*}
		We will say that $E$ is AD-regular with constant $C_0$ if the inequality above holds with $C=C_0$.
	\end{definition}
	
	The following conjecture, if true, would be a very satisfactory quantitative version of the Besicovitch projection theorem.
	\begin{conjecture}\label{conj:1}
		Let $s \in (0,1),\ C_0\in (1,\infty)$, and let $E\subset \R^2$ be a bounded AD-regular set with constant $C_0$. Suppose that
		\begin{equation}\label{eq:big Fav}
		\Fav(E)\ge s\diam(E).
		\end{equation}
		Then, there exists a Lipschitz graph $\Gamma\subset\R^2$ with $\lip(\Gamma)\lesssim_{s,C_0} 1$ and
		\begin{equation*}
			\HH(\Gamma\cap E)\gtrsim_{s,C_0} \HH(E).
		\end{equation*}
	\end{conjecture}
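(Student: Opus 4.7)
My plan is to try to reduce Conjecture \ref{conj:1} to the main theorem announced in the abstract, which produces a Lipschitz graph covering a big piece of $E$ provided that a set $G \subset \mathbb{S}^1$ of directions with $\mathcal{H}^1(G) \gtrsim 1$ satisfies $\|\pi_\theta \mathcal{H}^1|_E\|_{L^\infty} \lesssim 1$ for every $\theta \in G$. Thus it suffices to upgrade the hypothesis $\Fav(E) \ge s \diam(E)$ to such an $L^\infty$ bound, possibly after replacing $E$ by a subset $E' \subset E$ with $\mathcal{H}^1(E') \gtrsim_{s,C_0} \mathcal{H}^1(E)$.

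\textbf{Steps.} First, by Chebyshev together with the AD-regular upper bound $\mathcal{H}^1(\pi_\theta E) \lesssim_{C_0} \diam(E)$, I would extract $G_1 \subset \TT$ with $\mathcal{H}^1(G_1) \gtrsim_{s,C_0} 1$ on which $\mathcal{H}^1(\pi_\theta E) \gtrsim s \diam(E)$; since the total mass of $\pi_\theta \mathcal{H}^1|_E$ is $\sim \diam(E)$ by AD-regularity, on $G_1$ the mean density of the projection measure is $O_s(1)$. Second, I would disintegrate $\mathcal{H}^1|_E$ along the fibres of $\pi_\theta$, fix a threshold $M = M(s, C_0) \gg 1$, and define $\Bad_\theta \subset E$ to be the set of points lying in fibres of mass $> M$. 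The hope is that $\int_{G_1} \mathcal{H}^1(\Bad_\theta)\, d\theta \le \tfrac{1}{2}\, \mathcal{H}^1(E)\, \mathcal{H}^1(G_1)$ for $M$ large enough, so that a Fubini/pigeonhole argument produces a single $E' \subset E$ with $\mathcal{H}^1(E') \gtrsim \mathcal{H}^1(E)$ and a set $G_2 \subset G_1$ of positive measure on which $\|\pi_\theta \mathcal{H}^1|_{E'}\|_{L^\infty} \le M$. The main theorem of the paper then yields the graph.

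\textbf{Main obstacle.} The step above is precisely where Conjecture \ref{conj:1} becomes essentially open: going from the $L^1$-type information $\Fav(E) \gtrsim \diam(E)$ to the $L^\infty$-type conclusion $\|\pi_\theta \mathcal{H}^1|_{E'}\|_{L^\infty} \lesssim 1$ on a large set of directions is the long-standing bottleneck in quantifying the Besicovitch projection theorem. The worst scenario is one in which, for each $\theta \in G_1$, the projection is nearly injective outside a tiny set of fibres carrying enormous mass: these ``spikes'' contribute negligibly to $\mathcal{H}^1(\pi_\theta E)$ but dominate the $L^\infty$ norm, and they may be arranged so that removing them from $E$ costs a definite fraction of $\mathcal{H}^1(E)$ at each $\theta$, preventing the Fubini pigeonhole. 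Ruling this out would presumably require a corona/stopping-time argument in the spirit of Orponen's ``plenty of big projections'' \cite{orponen2020plenty}, but upgraded from an integral to a supremum estimate; I do not see how to carry this out with current techniques, which is why the present paper imposes the $L^\infty$ hypothesis directly rather than deducing it from $\Fav(E) \gtrsim \diam(E)$.
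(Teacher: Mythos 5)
The statement you were asked to prove is Conjecture \ref{conj:1}, and the paper does not prove it: it is stated explicitly as an open conjecture, and the main result actually established (\thmref{thm:main thm}) replaces the Favard-length hypothesis \eqref{eq:big Fav} by the strictly stronger hypothesis \eqref{eq:projbdd} that $\|\pi_\theta\mu\|_{L^\infty}\le M$ on a large measurable set of directions. So there is no proof in the paper to compare yours against, and your proposal is correctly not a proof either; the honest question is whether your analysis of the gap is accurate. It is. Your first step is exactly the paper's \remref{rem:godddirs}: Chebyshev plus the AD-regular upper bound $\HH(\pi_\theta(E))\lesssim_{C_0}\diam(E)$ extracts a measurable $G_1$ with $\HH(G_1)\gtrsim s$ on which $\HH(\pi_\theta(E))\gtrsim s\diam(E)$. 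And your identified obstacle --- that this is only $L^1$/measure-of-image information about $\pi_\theta\mu$, while \thmref{thm:main thm} needs a uniform $L^\infty$ density bound, and that a Fubini--pigeonhole removal of heavy fibres can fail because the heavy fibres may carry a definite fraction of $\mu$ for every $\theta\in G_1$ while the light fibres still project onto a set of length $\gtrsim s\diam(E)$ --- is precisely the point at which the conjecture remains open. The paper itself flags this asymmetry when it notes that \eqref{eq:projbdd} implies \eqref{eq:big projections} ``but in general \eqref{eq:projbdd} is much stronger than \eqref{eq:big projections}.''

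One small refinement worth recording: even if your Fubini step succeeded and produced $E'\subset E$ with $\HH(E')\gtrsim\HH(E)$ and $\|\pi_\theta\HH|_{E'}\|_{L^\infty}\le M$ for $\theta$ in a positive-measure $G_2$, you still could not invoke \thmref{thm:main thm} directly, because that theorem is stated for AD-regular sets and $E'$ need not be AD-regular; the paper points out that, unlike \thmref{thm:martikainenorpo}, its main theorem ``has only been proven for AD-regular sets'' and not for large subsets of them. So your reduction has two independent gaps, not one: the $L^1$-to-$L^\infty$ upgrade, and the passage from $E$ to a non-AD-regular subset $E'$. Your write-up is a correct diagnosis of why Conjecture \ref{conj:1} is open rather than a proof of it, and it should be presented as such.
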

	
	\begin{remark}
		A weaker version of Conjecture \ref{conj:1} was stated by David and Semmes in 1993 \cite{david1993quantitative}, and very recently proved by Orponen \cite{orponen2020plenty}. This is \thmref{thm:orpoPBP} discussed below.
	\end{remark}
	\begin{remark}
		The AD-regularity assumption in Conjecture \ref{conj:1} cannot be dropped nor replaced by the weaker assumption $\HH(E)\sim \diam(E)$, see \cite[Proposition 6.1]{chang2022structure}.
	\end{remark}
	\begin{remark}\label{rem:godddirs}
		Observe that the assumption \eqref{eq:big Fav} implies that there exists an $\HH$-measurable set $G\subset\TT$ with $\HH(G)\gtrsim s$ such that
		\begin{equation}\label{eq:big projections}
		\HH(\pi_\theta(E))\gtrsim s\diam(E)\quad\text{for all $\theta\in G$.}
		\end{equation}
		That is, $\Fav(E)\ge s\diam(E)$ implies that there exists a big set $G$ of ``good directions'' where $E$ has big projections. 
		
		On the other hand, the existence of a set $G$ as above implies that $\Fav(E)\gtrsim s^2\diam(E)$. Hence, the two conditions are equivalent, up to a constant. We stress that, a priori, the set of good directions $G$ arising from \eqref{eq:big Fav} is only measurable and large in measure. In particular, we have no lower bound on the size of the smallest interval contained in $G$. Even worse, it may be ``irregular'' in the sense that it is scattered inside $\TT$ and contains no interval.
	\end{remark}

	Significant progress towards proving Conjecture \ref{conj:1} has been recently achieved by Martikainen and Orponen \cite{martikainen2018characterising} and in the aforementioned work of Orponen \cite{orponen2020plenty}. We make further progress by proving the following result.
	\begin{theorem}\label{thm:main thm}
		Let $s \in (0,1), C_0,M\in (1,\infty)$, and let $E\subset\R^2$ be a bounded AD-regular set with constant $C_0$. Set $\mu=\HH|_{E}$.		
		Assume that there exists an $\HH$-measurable set $G\subset \TT$ with $\HH(G)\ge s$ and such that
			\begin{equation}\label{eq:projbdd}
			\|\pi_{\theta}\mu\|_{L^\infty(\R)}\le M\quad\text{for all $\theta\in G$,}
			\end{equation}
		where $\pi_\theta\mu$ is the push-forward of $\mu$ by $\pi_\theta$. 
		
		Then, there exists a Lipschitz graph $\Gamma\subset\R^2$ with $\lip(\Gamma)\lesssim_{C_0,M} 1$ and
		\begin{equation*}
		\HH(\Gamma\cap E)\gtrsim_{s,C_0,M} \HH(E).
		\end{equation*}
	\end{theorem}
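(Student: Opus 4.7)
The plan is to reduce the irregular good-directions setting of Theorem~\ref{thm:main thm} to the arc setting already handled by Orponen~\cite{orponen2020plenty}, via a multi-scale density regularization of $G$. Call an arc $A \subset \TT$ \emph{$\varepsilon$-full} if $\HH(G \cap A) \ge (1-\varepsilon)\HH(A)$. By the Lebesgue density theorem, most arcs of small enough length centered at points of $G$ are $\varepsilon$-full. The key geometric observation is that if $A$ is $\varepsilon$-full and one is looking at a ball of radius $r$, then the pointwise bound \eqref{eq:projbdd} transfers with a controlled loss in the constant $M$ from $G$ to \emph{every} direction in $A$: indeed, $\pi_\theta$ and $\pi_{\theta'}$ agree up to an error of order $|\theta-\theta'|\, r$ on such a ball, and for any $\theta \in A$ there is a $\theta' \in G \cap A$ with $|\theta - \theta'| \lesssim \varepsilon\, \HH(A)$.

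Given this reduction, I would run a Corona-type stopping-time argument on a David--Christ dyadic filtration $\DD$ of $E$. For each cube $Q \in \DD$, one associates a natural direction $\theta_Q$ (for instance from a $\beta$-number-optimal line approximating $E \cap Q$) and an arc $A_Q$ of length $\sim \diam(Q)/\diam(E)$ around $\theta_Q$. Declare $Q$ to be a stopping cube if either (i) the arc $A_Q$ fails to be $\varepsilon$-full, or (ii) a standard quantitative rectifiability coefficient of $E$ on $Q$ exceeds a threshold. In the complementary ``good'' tree, the projection bound \eqref{eq:projbdd} holds approximately on the whole arc $A_Q$, and Orponen's theorem applied at the local scale yields a Lipschitz graph approximation of $E \cap Q$; standard techniques then glue these local pieces into a global Lipschitz graph covering a definite fraction of $\mu$.

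Type~(i) stopping cubes should satisfy a Carleson packing estimate coming from a weak-type bound for the Hardy--Littlewood maximal operator applied to $\mathbf{1}_{\TT \setminus G}$: the set of arcs on which $G$ fails to be $\varepsilon$-dense has total mass $\lesssim \varepsilon^{-1}\HH(\TT \setminus G)$, and an averaging argument transfers this to a packing condition on $\DD$. Type~(ii) cubes are controlled by the existing quantitative rectifiability machinery, since Remark~\ref{rem:godddirs} already guarantees a large Favard length and hence sufficient flatness at most scales.

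The main obstacle, in my view, is making the regularization quantitatively compatible with the parameters of Orponen's theorem: the arc length $\HH(A_Q)$ must remain bounded below by a constant depending only on $s, C_0, M$, and the loss $M \mapsto M/\varepsilon$ in the transferred projection bound must not overwhelm what that theorem can absorb. Balancing these against the Carleson estimate, whose quality is itself $\varepsilon$-dependent, likely requires a careful, possibly scale-dependent choice of parameters and is where the technical heart of the argument should lie.
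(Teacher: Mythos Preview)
Your central reduction step is incorrect: the $L^\infty$ bound \eqref{eq:projbdd} does \emph{not} transfer from $\theta'\in G$ to a nearby $\theta$ with only a multiplicative loss. What the perturbation $|\pi_\theta(p)-\pi_{\theta'}(p)|\lesssim |\theta-\theta'|\,r$ on $B(x,r)$ actually yields is $\pi_\theta(\mu|_{B(x,r)})(I)\le M\bigl(|I|+C|\theta-\theta'|\,r\bigr)$, and the additive term dominates as $|I|\to 0$; you get no control on $\|\pi_\theta(\mu|_{B(x,r)})\|_\infty$. (Concretely: if $E\cap B(x,r)$ is close to a line in direction $\theta$, then $\pi_\theta\mu|_B$ has arbitrarily large density, however close $\theta$ is to some $\theta'\in G$.) The weaker big-projections hypothesis of Theorem~\ref{thm:orpoPBP} also fails to transfer, since $\HH^1(\pi_\theta(E\cap B))$ has no useful lower-semicontinuity in $\theta$. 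Several structural choices compound the problem: centering $A_Q$ at the $\beta$-number direction $\theta_Q$ gives no link between $A_Q$ and the fixed set $G$; the Carleson packing you sketch bounds a set of \emph{directions}, not a family of cubes in $E$, and you have not explained how to bridge the two; and invoking Remark~\ref{rem:godddirs} to obtain ``sufficient flatness'' is circular, since deducing flatness from large Favard length is essentially Conjecture~\ref{conj:1}.

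The paper avoids all of this by never attempting a pointwise transfer of the projection bound. Instead it converts \eqref{eq:projbdd} into a bound on the conical energy over $G^\perp$ (Corollary~\ref{cor:changtolsa}) and proves a propagation estimate (Proposition~\ref{prop:main prop}): if $G$ is $(1-\varepsilon)$-dense in an interval $J$, the conical energy over $3J$ is controlled by that over $G$ plus $\HH^1(J)\mu(E)$. Here the $L^\infty$ hypothesis enters only to bound the $\mu$-measure of thin rectangles (Lemma~\ref{lem:ADRrectangles}) and of the missing cones $X(x,J\setminus G,r)$ (Lemma~\ref{lem:littlemeas}); combined with AD-regularity this suffices. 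Iterating the proposition grows the good set until it fills a macroscopic arc, after which the Martikainen--Orponen conical-energy criterion (Proposition~\ref{prop:conical energy}), not Theorem~\ref{thm:orpoPBP}, produces the Lipschitz graph.
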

	Note that the $L^\infty$-condition \eqref{eq:projbdd} implies the big projections condition \eqref{eq:big projections}:
	\begin{equation*}
	\HH(\pi_\theta(E))\ge M^{-1}\mu(E)\gtrsim M^{-1}C_0^{-1}\diam(E),
	\end{equation*}
	but in general \eqref{eq:projbdd} is much stronger than \eqref{eq:big projections}.
	
	\begin{remark}\label{rem:novelty}
		The main novelty of \thmref{thm:main thm} is that it allows us to work with a set of directions $G\subset\TT$ which is merely $\HH$-measurable and large in measure, just like the set of good directions arising from Conjecture \ref{conj:1} (see \remref{rem:godddirs}). Previous results of this type, which we discuss below, needed to assume something about projections in a large \emph{interval} of directions. Just how big of a difference this makes is discussed further in \remref{rem:novelty2}.
	\end{remark}
	
	\subsection{Comparison with results of Martikainen and Orponen}
	Let us compare \thmref{thm:main thm} with the results from \cite{martikainen2018characterising} and \cite{orponen2020plenty}. We only state their planar versions for simplicity, but both have higher-dimensional counterparts.
	\begin{theoremalph}[{\cite{martikainen2018characterising}}]\label{thm:martikainenorpo}
		Let $s \in (0,1), C_0,M\in (1,\infty)$, and let $E\subset\R^2$ be an AD-regular set with constant $C_0$. Let $E_1\subset E\cap B(0,1)$ be an $\HH$-measurable subset with $\HH(E_1)\ge s$. Set $\mu=\HH|_{E_1}$.
		
		Assume there exists $\theta_0\in \TT$ such that for $G=(\theta_0,\, \theta_0+s)$ we have
		\begin{equation}\label{eq:projbddL2}
			\int_{G}\|\pi_{\theta}\mu\|_{L^2(\R)}^2\, d\theta\le M.
		\end{equation}
		Then, there exists a Lipschitz graph $\Gamma\subset\R^2$ with $\lip(\Gamma)\lesssim_{s,C_0,M} 1$ and
		\begin{equation*}
			\HH(\Gamma\cap E_1)\gtrsim_{s,C_0,M} \HH(E_1).
		\end{equation*}
	\end{theoremalph}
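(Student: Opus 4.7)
My approach is Fourier-analytic and follows the template of quantitative rectifiability à la David--Semmes. I translate the $L^2$-projection hypothesis into a conical Fourier energy bound, localize it into a Carleson packing for an $\alpha$-type coefficient on a Christ lattice, and then feed the packing into a corona decomposition producing a single Lipschitz graph.

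\textbf{Step 1: From projections to cone energy.} After rotating I may assume $\theta_0 = -s/2$, so $G = (-s/2, s/2)$ is symmetric about horizontal. By Plancherel on the line together with a polar change of variables $\xi = t e_\theta$, the hypothesis \eqref{eq:projbddL2} rewrites as
\begin{equation*}
\int_{C_G} |\widehat{\mu}(\xi)|^2 \, \frac{d\xi}{|\xi|} \lesssim M,
\end{equation*}
where $C_G = \{r e_\phi : r > 0,\ \phi \in G \cup (G+\tfrac12)\}$ is a bi-infinite horizontal sector of aperture $s$.

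\textbf{Step 2: Carleson packing of a conical $\alpha$-number.} Build a Christ-style dyadic lattice $\DD(E)$ on $E$. For each $Q \in \DD(E_1)$, localize $\mu$ by multiplying with a smooth bump adapted to the parent of $Q$ and convolving with a Littlewood--Paley piece of the cone multiplier for $C_G$ at frequency scale $\ell(Q)^{-1}$. Applying the conical $L^2$ bound of Step 1 piece by piece produces a coefficient $\alpha_G(Q) \ge 0$ satisfying the Carleson packing
\begin{equation*}
\sum_{Q \subseteq Q_0} \alpha_G(Q)^2 \, \mu(Q) \lesssim_{s, C_0, M} \mu(Q_0), \qquad Q_0 \in \DD(E_1).
\end{equation*}
The geometric content of $\alpha_G$ is that small $\alpha_G(Q)$ excludes $\mu$-mass concentrating on lines perpendicular to directions in $G$, i.e.\ on lines whose direction lies in the arc $G + \tfrac14$ around the vertical; combined with AD-regularity, this forces $E_1 \cap Q$ to lie in an $\varepsilon \ell(Q)$-neighborhood of a line $L_Q$ whose slope over the horizontal is $\lesssim_s 1$.

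\textbf{Step 3: Corona and Lipschitz graph.} Stop each $Q \in \DD(E_1)$ as soon as $\alpha_G(Q) > \varepsilon$, for a threshold $\varepsilon = \varepsilon(s, C_0, M)$ small. By Step 2 and Chebyshev the stopping cubes carry total $\mu$-mass $\le \tfrac{1}{2}\mu(E_1)$, so a set $F \subset E_1$ of measure $\gtrsim \mu(E_1)$ sits entirely inside the surviving tree $\cT$, on which $E_1$ is approximated at every scale by lines of slope $\lesssim_s 1$. The David--Semmes ``Big Pieces of Lipschitz Graphs'' criterion, or equivalently Jones' traveling salesman construction biased against the vertical, then covers $F$ by a single Lipschitz graph $\Gamma$ over the horizontal with $\lip(\Gamma) \lesssim_{s, C_0, M} 1$ and $\HH(\Gamma \cap E_1) \gtrsim_{s, C_0, M} \HH(E_1)$. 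The main obstacle is Step 2: manufacturing a local, scale-by-scale Carleson condition from a global Fourier energy with losses only polynomial in $s$, which forces a delicate choice of smooth multiplier adapted to a narrow cone of aperture $s$, a verification that $\alpha_G$ faithfully detects geometric concentration on near-vertical lines (and not merely frequency content), and careful bookkeeping of cross-scale interactions so that the packing survives after summation over Littlewood--Paley pieces.
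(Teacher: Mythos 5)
This is a result the paper cites from \cite{martikainen2018characterising} rather than proves, but the machinery it imports (Proposition \ref{prop:changtolsa} and Proposition \ref{prop:conical energy}) encodes the intended route: Plancherel in polar coordinates converts \eqref{eq:projbddL2} into the conical energy bound $\int\int_0^\infty \mu(X(x,G^\perp,r))\,r^{-2}\,dr\,d\mu(x)\lesssim M$, Chebyshev extracts a subset $F$ with $\mu(F)\gtrsim\mu(E_1)$ on which the energy is pointwise bounded, and the corona construction of Martikainen--Orponen (Proposition \ref{prop:conical energy} here) turns that into a big piece of a Lipschitz graph. Your Step 1 is exactly the first move of this route and is fine.

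The genuine gap is in Step 2, and it propagates into Step 3. You claim that smallness of $\alpha_G(Q)$ ``forces $E_1\cap Q$ to lie in an $\varepsilon\ell(Q)$-neighborhood of a line $L_Q$.'' This is false: the quantity your $\alpha_G$ measures (Fourier energy in the cone $C_G$, equivalently $\mu$-mass in the spatial cones $X(x,G^\perp,r)$) vanishes identically for \emph{any} Lipschitz graph over the horizontal with slope below $\tan(\pi s)$, and such graphs have $\beta$-numbers of order $1$ at arbitrarily many scales and locations. So small $\alpha_G(Q)$ yields only an approximate empty-cone property, never flatness, and your corona in Step 3 --- which stops on large $\alpha_G$ and then invokes ``approximated at every scale by lines'' plus a David--Semmes BPLG criterion or a vertically-biased traveling salesman --- has no valid input. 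Converting the empty-cone property on the surviving tree into an actual Lipschitz graph is precisely the hard part of \cite{martikainen2018characterising} (their Proposition 1.12), and it is made harder here by the fact that $E_1$ is only a positive-measure subset of an AD-regular set, so the standard David--Semmes criteria (which require AD-regularity of the set being parametrized) do not apply off the shelf. A secondary, smaller issue: the hypothesis is single-scale, so the global Fourier bound gives only $\sum_{Q}\alpha_G(Q)^2\mu(Q)\lesssim_{s,C_0,M}\mu(E_1)$ for the top cube, not the uniform Carleson packing over all $Q_0$ that you assert; this weaker bound suffices for a Chebyshev selection but not for the stronger corona bookkeeping you describe.
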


	The result below was conjectured in \cite{david1993quantitative}, and it was proved very recently by Orponen.
	
	\begin{theoremalph}[\cite{orponen2020plenty}]\label{thm:orpoPBP}
		Let $s \in (0,1), C_0\in (1,\infty)$, and let $E\subset\R^2$ be an AD-regular set with constant $C_0$. Suppose that for every $x\in E$ and $0<r<\diam(E)$ there exists $\theta_{x,r}\in\TT$ such that for all $\theta\in G_{x,r}=(\theta_{x,r},\, \theta_{x,r}+s)$ we have
		\begin{equation}\label{eq:bigproj}
			\HH(\pi_\theta(E\cap B(x,r)))\ge s r.
		\end{equation}
		Then, for every $x\in E$ and $0<r<\diam(E)$ there exists a Lipschitz graph $\Gamma_{x,r}\subset\R^2$ with $\lip(\Gamma_{x,r})\lesssim_{s,C_0} 1$ and
		\begin{equation*}
			\HH(\Gamma_{x,r}\cap E\cap B(x,r))\gtrsim_{s,C_0} \HH(E\cap B(x,r)).
		\end{equation*}		
	\end{theoremalph}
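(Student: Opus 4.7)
Since both the hypothesis and the conclusion of the statement are scale-invariant and localized, it suffices to cover a definite proportion of $E\cap B_0$ by a single Lipschitz graph with constant depending only on $s$, $C_0$, for an arbitrary top ball $B_0=B(x_0,r_0)$ centered on $E$ with $r_0<\diam(E)$. Normalize so that $B_0=B(0,1)$ and set $\mu:=\HH|_{E\cap B_0}$. Build a David--Mattila dyadic system $\Dm$ on $\mu$ and attach to each cube $Q\in\Dm$ the length-$s$ arc $G_Q\subset\TT$ of good directions furnished by the hypothesis at the concentric ball $B_Q$, together with its midpoint $\theta_Q$.

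\textbf{Corona by angular drift.} Fix a small tolerance $\eta=\eta(s,C_0)>0$ to be determined. Starting from the root $R_0$, group cubes into coronas $\Tree(R)$ whose members $Q$ satisfy $\dist_{\TT}(\theta_Q,\theta_R)\le \eta$; the maximal cubes violating this condition form $\Stop(R)$. Inside each corona, every $G_Q$ contains a common subarc $I_R\subset G_R$ of length at least $s-2\eta$, so the big-projection inequality \eqref{eq:bigproj} holds uniformly in $Q\in\Tree(R)$ for every $\theta\in I_R$.

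\textbf{Promotion to an $L^\infty$-hypothesis and application of \thmref{thm:main thm}.} The technical heart is a promotion lemma: for each corona there exist a measurable $G'_R\subset I_R$ with $\HH(G'_R)\ge s/4$ and a subset $F_R\subset E\cap R$ with $\mu(F_R)\ge \mu(R)/2$ such that $\|\pi_\theta(\HH|_{F_R})\|_{L^\infty(\R)}\le M$ for every $\theta\in G'_R$, with $M=M(s,C_0,\eta)$. I would prove the lemma by Chebyshev-type thresholding applied at every scale in $\Tree(R)$: the preimage under $\pi_\theta$ of the high-density part of $\pi_\theta\mu|_Q$ is thin per direction, and a Fubini step across the connected arc $I_R$ consolidates these per-direction exceptional sets into a single thin set. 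With the promotion lemma in hand, \thmref{thm:main thm} applied to $\HH|_{F_R}$ produces a Lipschitz graph $\Gamma_R$ with $\lip(\Gamma_R)\lesssim 1$ and $\HH(\Gamma_R\cap F_R)\gtrsim \mu(R)$. This yields a Carleson packing bound $\sum_{Q\in\Stop(R)}\mu(Q)\le (1-\delta)\mu(R)$ for some $\delta=\delta(s,C_0)>0$; iterating on stopping cubes (where the hypothesis reapplies by scale-invariance) and stitching the resulting uniformly-Lipschitz graphs into a single graph by a standard multi-scale amalgamation (\emph{\`a la} Azzam--Schul) completes the proof.

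\textbf{Main obstacle.} The promotion lemma is the essential difficulty: the big-projections hypothesis is an $L^1$-type statement about projected measures, while the $L^\infty$-type statement required for \thmref{thm:main thm} is genuinely stronger and cannot be extracted from big projections at a single scale by a naive pigeonhole. The arc structure of $G_{x,r}$ is indispensable here, providing the connectedness used to consolidate per-direction exceptional sets via Fubini, and the multi-scale iteration within each corona appears necessary to absorb the exceptional mass progressively. A secondary difficulty is the positive packing defect $\delta$ of the corona step, which requires a quantitative rotational rigidity statement ruling out substantial mass of $E$ carried along strongly drifting directions---a phenomenon ultimately controlled by invoking the big-projection hypothesis at an intermediate scale where the drifted arcs would fail to produce projections of full length.
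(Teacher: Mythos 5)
This statement is Theorem~C, quoted from \cite{orponen2020plenty}; the present paper contains no proof of it (and explicitly remarks that none of Theorems \ref{thm:main thm}, \ref{thm:martikainenorpo}, \ref{thm:orpoPBP} implies another ``in an obvious way''), so I can only assess your sketch on its own merits. It has a genuine gap at exactly the point you yourself flag as the ``technical heart.''

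The promotion lemma is not merely difficult -- as stated, there is no mechanism that could make it work, and the proposed mechanism (per-scale Chebyshev thresholding plus a Fubini consolidation over the arc $I_R$) fails. The hypothesis \eqref{eq:bigproj} is a statement about $\HH(\pi_\theta(E\cap B))$, i.e.\ about the measure of the \emph{support} of $\pi_\theta\mu$; it gives no control whatsoever on the distribution function of $\pi_\theta\mu$. Concretely, take $E$ to be an AD-regular set containing, inside $B_0$, a copy of the four-corner Cantor set of measure $\gtrsim\mu(B_0)$ glued to a piece with big projections in all directions (a circle, say). The big-projections hypothesis survives at the top scale, yet for a.e.\ $\theta$ the measure $\pi_\theta(\HH|_{K})$ of the Cantor piece $K$ is singular, so \emph{every} subset $F_R$ retaining a fixed proportion of $K$ has $\|\pi_\theta(\HH|_{F_R})\|_{L^\infty}=\infty$ for a.e.\ $\theta$. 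The Fubini step cannot consolidate the exceptional sets because the quantity it would need to average, $\int_{I_R}\|\pi_\theta\mu\|_2^2\,d\theta$, is infinite; finiteness of that integral is precisely the \emph{hypothesis} of Theorem~\ref{thm:martikainenorpo}, not a consequence of big projections. Detecting and discarding the Cantor piece is the entire content of Theorem~\ref{thm:orpoPBP}, and it genuinely requires the multi-scale hypothesis in a way your single-corona thresholding does not exploit; Orponen's actual proof proceeds by a completely different and much longer route.

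Two further problems. First, even granting the promotion lemma, you apply Theorem~\ref{thm:main thm} to $\HH|_{F_R}$ where $F_R$ is a large subset of $E\cap R$; but Theorem~\ref{thm:main thm} is stated (and proved) only for AD-regular sets $E$ with $\mu=\HH|_E$, not for restrictions to large subsets -- the paper points out this limitation explicitly when comparing with Theorem~\ref{thm:martikainenorpo}. Second, the Carleson packing bound $\sum_{Q\in\Stop(R)}\mu(Q)\le(1-\delta)\mu(R)$ for the angular-drift stopping time is asserted rather than derived; ruling out that most of the mass drifts away in angle is itself a substantial quantitative rigidity statement and cannot be waved through by ``invoking the big-projection hypothesis at an intermediate scale.''
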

	Observe that none of the three results above (\thmref{thm:main thm}, \thmref{thm:martikainenorpo}, \thmref{thm:orpoPBP}) implies any other, at least not in an obvious way. We summarize the main differences between them below. 
	
		Firstly, as already mentioned in \remref{rem:novelty}, in all three results we assume that $\HH(G)\ge s$, but in \thmref{thm:main thm} we only assume that $G$ is $\HH$-measurable, whereas in the other two results we assume that $G$ is an interval. We achieved this improvement at the cost of assuming better regularity of $\pi_{\theta}\mu$ for each $\theta\in G$ than in either \thmref{thm:martikainenorpo} or \thmref{thm:orpoPBP}, compare \eqref{eq:projbdd} with \eqref{eq:projbddL2} and \eqref{eq:bigproj}.
		
		Secondly, observe that \thmref{thm:main thm} and \thmref{thm:martikainenorpo} are ``single-scale results'', whereas \thmref{thm:orpoPBP} is a ``multi-scale result'', in the sense that in \thmref{thm:orpoPBP} one needs to assume that $E$ has big projections at \emph{all scales and locations} in order to get Lipschitz graphs covering $E$. Obtaining a single-scale version of \thmref{thm:orpoPBP} is an open problem stated in \cite[Question 1]{orponen2020plenty}.
		
		Finally, \thmref{thm:martikainenorpo} holds for {large subsets} of AD-regular sets, whereas \thmref{thm:main thm} and \thmref{thm:orpoPBP} have only been proven for AD-regular sets.
		
		
	
	\subsection{Related results} In \cite{david1993quantitative} David and Semmes proved that if $E\subset\R^2$ is AD-regular, it satisfies \emph{the weak geometric lemma} (a multi-scale flatness property), and $\HH(\pi_\theta(E))\gtrsim 1$ for some $\theta\in \TT$ (a single direction is enough!), then $E$ contains a big piece of a Lipschitz graph.
	
	In \cite{jones1997checkerboards} the authors proved a quantitative Besicovitch projection theorem for sets $E$ which are boundaries of open sets. The structure of sets with nearly maximal Favard length was studied in \cite{chang2022structure}. A version of Besicovitch projection theorem for Radon measures was recently shown in \cite{tasso2022rectifiability}. A version of the Besicovitch projection theorem for metric spaces was proved in \cite{bate2020purely}.
	
	See \cite{chang2017analytic,dabrowski2020cones} for the study of \emph{conical energies}, which we also use in the proof of \thmref{thm:main thm}. Closely related concepts of \emph{conical defect} and \emph{measures carried by Lipschitz graphs} were studied in \cite{badger2020radon}. 
	
	An alternative approach to quantifying Besicovitch projection theorem is to estimate the {rate of decay of Favard length} of $\delta$-neighbourhoods of certain purely unrectifiable sets. See \cite{mattila1990orthogonal,peres2002howlikely,tao2009quantitative,laba2010favard,bateman2010estimate,bond2010buffon,nazarov2011power,bond2014buffon,laba2014recent,wilson2017sets,bongers2019geometric,laba2022vanishing}.
	
	The Besicovitch projection theorem, and some of the results mentioned above, have been also proven for \emph{generalized projections} in place of orthogonal projection. See \cite{hovila2012besicovitch,bond2011circular,cladek2020upper,bongers2021transversal,davey2022quantification}.
	
	\subsection{Vitushkin's conjecture}\label{sec:Vitushkin}
	One of the main motivations for the study of Conjecture \ref{conj:1} is to complete the solution to Vitushkin's conjecture, which asks for the relation between Favard length and analytic capacity. Different parts of the conjecture have been verified or disproved in \cite{calderon1977cauchy,david1998unrectifiable,mattila1986smooth,jones1988positive}, but one question remains: given a $1$-dimensional compact set $E\subset\R^2$ with non-$\sigma$-finite length and $\Fav(E)>0$, is the analytic capacity of $E$ positive? It is beyond the scope of this introduction to discuss this in detail, but let us mention that recent progress on this problem made in \cite{chang2017analytic} and \cite{dabrowski2022analytic} used the ideas and results obtained in \cite{martikainen2018characterising} and \cite{orponen2020plenty}, respectively. Solving Conjecture \ref{conj:1} (or even it's weaker, multi-scale version) would immediately mark substantial progress on this question, see \cite[Remark 1.9]{dabrowski2022analytic}. We refer the interested reader to \cite{dabrowski2022analytic} for details.
	
	\subsection{Directions spanned by sets} We give an application of \thmref{thm:main thm} to directions spanned by sets.
	\begin{definition}
		Given a Borel set $E\subset\R^2$ we define \emph{the set of directions spanned by $E$} as
		\begin{equation*}
		D(E) \coloneqq \bigg\{\frac{x-y}{|x-y|}\ :\ x,y\in E,\ x\neq y\bigg\}\subset \mathbb{S}^1,
		\end{equation*}
		or, using our preferred parametrization of the circle,
		\begin{equation*}
		D_{\TT}(E) \coloneqq \frac{1}{2\pi}\arg(D(E))\subset\TT.
		\end{equation*}
		We will denote the complement of $D_{\TT}(E)$ by $G_\TT(E)$, and we will say that the directions in $G_\TT(E)$ are \emph{avoided} by $E$.
	\end{definition}
	
	Sets of directions spanned by subsets of $\R^d$ have been studied in \cite{orponen2011radial,iosevich2012onsets}. They are closely related to \emph{radial projections} due to the fact that
	\begin{equation*}
		D(E) = \bigcup_{x\in E} \pi_x(E\setminus\{x\}),
	\end{equation*}
	where $\pi_x(y) = \frac{x-y}{|x-y|}$ is the radial projection map from $x$. The behaviour of purely unrectifiable sets under radial projections was studied in \cite{marstrand1954some,simon2006visibility,bond2016quantitative}. See also \cite{mattila1981integralgeometric,csornyei2000onthevisibility,csornyei2001howto,vardakis2022geometry,dote2022exceptional,orponen2022kaufman}.
	\begin{remark}\label{rem:empty cone}
		Given $G\subset\TT$ and $x\in\R^2$, consider the cone $X(x,G) \coloneqq \bigcup_{\theta\in G}\ell_{x,\theta},$ where $\ell_{x,\theta}=x+\spn(e_\theta)$. Note that if $E\subset\R^2$ satisfies $G_\TT(E)\neq\varnothing$, then
		\begin{equation*}
			E\cap X(x,G_{\TT}(E))=\{x\}\quad\text{for all $x\in E$},
		\end{equation*}
		and $G_\TT(E)$ is the largest subset of $\TT$ with this property.
	\end{remark}
	
	The following is an easy observation used in many geometric measure theory proofs (for example, in the proof of \thmref{thm:BesFed}).
	\begin{observation}\label{obs:trivial}
		A set $E\subset\R^2$ is contained in some Lipschitz graph $\Gamma\subset\R^2$ if and only if there exists a (non-degenerate) interval $I\subset\TT$ such that
		\begin{equation*}
		I\subset G_\TT(E).
		\end{equation*}
		Furthermore, we have $\lip(\Gamma)\lesssim \HH(I)^{-1}$. Usually this result is stated in terms of the ``empty cone condition''
		\begin{equation*}
			E\cap X(x,I)=\{x\}\quad\text{for all $x\in E$},
		\end{equation*}
		but this is equivalent by \remref{rem:empty cone}. See \cite[Lemma 15.13]{mattila1999geometry} or \cite[Remark 1.11]{martikainen2018characterising} for an easy proof.
	\end{observation}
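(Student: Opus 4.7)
The plan is to handle the two implications of the biconditional directly via planar geometry; the quantitative bound $\lip(\Gamma)\lesssim\HH(I)^{-1}$ will fall out of the backward implication, which is the slightly meatier direction. No machinery beyond trigonometry and the McShane Lipschitz extension theorem should be required.

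For the forward direction, suppose $E\subset\Gamma$ where, after rotating coordinates, $\Gamma=\{(t,f(t)):t\in\R\}$ is the graph of an $L$-Lipschitz function $f$. Any two distinct points $x=(a,f(a))$ and $y=(b,f(b))$ of $E$ satisfy $a\ne b$ and $|f(a)-f(b)|\le L|a-b|$, so the unit vector $(x-y)/|x-y|=e_\phi$ has $|\tan(2\pi\phi)|\le L$. The complement of this double cone of slopes in $\TT$ consists of two open antipodal arcs around $\pm 1/4$, each of length comparable to $1/L$ for large $L$, and each of them sits inside $G_\TT(E)$.

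For the backward direction, let $I\subset G_\TT(E)$ be an open arc of length $\HH(I)=2\epsilon>0$ and midpoint $\theta_0$. Since $D(E)$ is invariant under $v\mapsto -v$, the set $D_\TT(E)$, hence also $G_\TT(E)$, is antipodally symmetric, so $I+\tfrac12\subset G_\TT(E)$ as well; this forces $\HH(I)\le \tfrac12$ whenever $E$ contains at least two points. Rotate so that $\theta_0=0$. Then for any distinct $x,y\in E$ the direction $e_\phi=(x-y)/|x-y|$ satisfies $\phi\notin(-\epsilon,\epsilon)\cup(\tfrac12-\epsilon,\tfrac12+\epsilon)$, whence, by the concavity of $\sin$ on $[0,\pi/2]$,
\[
|\sin(2\pi\phi)|\ge \sin(2\pi\epsilon)\ge 4\epsilon,\qquad |\cos(2\pi\phi)|\le 1.
\]
Writing $p_1,p_2$ for the coordinates in the rotated frame, this yields
\[
|p_1(x)-p_1(y)|\;\le\;\frac{|\cos(2\pi\phi)|}{|\sin(2\pi\phi)|}\,|p_2(x)-p_2(y)|\;\le\;\frac{1}{\sin(2\pi\epsilon)}\,|p_2(x)-p_2(y)|.
\]
Thus $p_2|_E$ is injective, and the function $p_2(x)\mapsto p_1(x)$ defined on $p_2(E)\subset\R$ is Lipschitz with constant at most $1/\sin(2\pi\epsilon)\lesssim\HH(I)^{-1}$. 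Extending it to $\R$ by McShane's theorem and taking its graph in the rotated frame produces the desired Lipschitz graph $\Gamma\supset E$ with $\lip(\Gamma)\lesssim\HH(I)^{-1}$.

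There is no real obstacle in the argument: the only point requiring care is the antipodal symmetry of $D_\TT(E)$, which both confines $\HH(I)$ to $(0,\tfrac12]$ and forces $e_\phi$ to avoid both arcs $I$ and $I+\tfrac12$; once this is tracked, the proof reduces to the trigonometric inequality above and a standard Lipschitz extension.
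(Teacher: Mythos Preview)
Your proof is correct. The paper does not actually prove this observation; it merely cites \cite[Lemma 15.13]{mattila1999geometry} and \cite[Remark 1.11]{martikainen2018characterising} for the argument, and what you have written is precisely the standard elementary proof found in those references.
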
 	
	It is natural to ask if the following generalization of the observation above is true:
	\begin{question}\label{q:nontriv}
		Let $s\in (0,1),\ C_0\ge 1$. Suppose that $E\subset\R^2$ is a bounded AD-regular set with constant $C_0$, and that
		\begin{equation*}
		\HH(G_\TT(E))\ge s.
		\end{equation*}
		Is it possible to find a Lipschitz graph $\Gamma\subset\R^2$ with $\lip(\Gamma)\lesssim_{s,C_0} 1$ and 
		\begin{equation*}
		\HH(\Gamma\cap E)\gtrsim_{s,C_0} \HH(E)?
		\end{equation*}
	\end{question}
	\begin{remark}\label{rem:novelty2}
	Note that in Question \ref{q:nontriv} we added many assumptions compared to Observation \ref{obs:trivial}, we weakened the conclusion, and the only assumption that is weaker in Question \ref{q:nontriv} is that we assume no additional structure on $G_\TT(E)$ beyond large $\HH$-measure. This makes all the difference: the case of a big interval, as in Observation \ref{obs:trivial}, is very easy, whereas Question \ref{q:nontriv} appears to be non-trivial.	
	Similarly, the fact that \thmref{thm:main thm} does not assume much regularity about the set of good directions $G$ leads to genuinely new difficulties compared to \thmref{thm:martikainenorpo} and \thmref{thm:orpoPBP}, and it is not merely a cosmetic difference. 
	\end{remark}
	
	Using \thmref{thm:main thm} we are able to answer affirmatively the following special case of Question \ref{q:nontriv}.
	\begin{cor}
		Let $s\in (0,1),\ C_0\ge 1$. Suppose that $E\subset \R^2$ is a bounded AD-regular set with constant $C_0$, and that
		\begin{equation*}
		\HH(G_\TT(E))\ge s.
		\end{equation*}
		Suppose further that $E$ is a union of parallel line segments.
		Then, there exists a Lipschitz graph $\Gamma\subset\R^2$ with $\lip(\Gamma)\lesssim_{s,C_0} 1$ and 
		\begin{equation*}
		\HH(\Gamma\cap E)\gtrsim_{s,C_0} \HH(E).
		\end{equation*}
	\end{cor}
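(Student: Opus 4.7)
My plan is to reduce the corollary to \thmref{thm:main thm}. Write $E$ as a union of (nondegenerate) line segments, all parallel to some fixed direction $e_{\theta_0}$, and set $\mu = \HH|_E$. The first observation is that $\theta_0 \notin G_\TT(E)$: any two distinct points on the same segment span the direction $e_{\theta_0}$, so $\theta_0 \in D_\TT(E)$. The strategy is to upgrade the hypothesis $\HH(G_\TT(E)) \ge s$ into the $L^\infty$-projection bound \eqref{eq:projbdd} on a large subset of $G_\TT(E)$, with constants depending only on $s$; then \thmref{thm:main thm} applies directly.

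The key computation is that for every $\theta \in G_\TT(E)$, the projection $\pi_\theta$ is injective on $E$ by the very definition of $G_\TT(E)$, and $E$ is $1$-rectifiable with approximate tangent $e_{\theta_0}$ at $\mu$-a.e.\ point (since it is a union of segments in that direction). Hence the tangential Jacobian of $\pi_\theta|_E$ is the constant $c_\theta := |e_\theta \cdot e_{\theta_0}| = |\cos(2\pi(\theta - \theta_0))|$. Combining the area formula with injectivity produces the identity
\[
\pi_\theta \mu = c_\theta^{-1}\, \mathcal{L}^1|_{\pi_\theta(E)},
\]
so $\|\pi_\theta \mu\|_{L^\infty(\R)} = c_\theta^{-1}$.

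The last step is to restrict to directions where $c_\theta$ is bounded below. Since $c_\theta$ vanishes only at $\theta \equiv \theta_0 \pm 1/4 \pmod{1}$ and behaves linearly near those points, for a small enough absolute constant $c > 0$ the set
\[
G' := G_\TT(E) \setminus \bigl(B(\theta_0 + \tfrac{1}{4},\, cs) \cup B(\theta_0 - \tfrac{1}{4},\, cs)\bigr)
\]
satisfies $\HH(G') \ge s/2$ and $c_\theta \gtrsim s$ uniformly on $G'$. Consequently $\|\pi_\theta \mu\|_{L^\infty(\R)} \lesssim s^{-1}$ for every $\theta \in G'$. Applying \thmref{thm:main thm} with $G'$, $s/2$, and $M \sim s^{-1}$ in place of $G$, $s$, $M$ produces a Lipschitz graph $\Gamma$ with $\lip(\Gamma) \lesssim_{s, C_0} 1$ and $\HH(\Gamma \cap E) \gtrsim_{s, C_0} \HH(E)$, as desired.

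I do not anticipate a serious obstacle here: the corollary is essentially a reinterpretation of \thmref{thm:main thm} in the special situation where the parallel-segment structure automatically supplies the uniform $L^\infty$ control on projections that the theorem requires. The only substantive point is the area-formula identification of $\pi_\theta \mu$; the rest is measure-theoretic bookkeeping around the two ``bad'' perpendicular directions.
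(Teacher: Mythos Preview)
Your overall strategy matches the paper's, but the central step contains a $\perp$-confusion that invalidates the argument as written. The claim ``for every $\theta \in G_\TT(E)$, the projection $\pi_\theta$ is injective on $E$ by the very definition of $G_\TT(E)$'' is false. Membership $\theta\in G_\TT(E)$ says that no two points of $E$ differ by a vector parallel to $e_\theta$, but the kernel of $\pi_\theta(x)=e_\theta\cdot x$ is $\ell_\theta^\perp$, so $\pi_\theta$ is injective on $E$ precisely when $E$ avoids the direction $\theta^\perp$, i.e.\ when $\theta^\perp\in G_\TT(E)$. For a concrete failure, take $E=\bigcup_{k=0}^{n-1}[0,1/n]\times\{k/n\}$, which is AD-regular with constant $\sim 1$ and has $\HH(G_\TT(E))\ge 1/2$ for every $n$. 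Every $\theta\in(0,1/8)$ lies in $G_\TT(E)$ and is far from $\theta_0\pm\tfrac14=\pm\tfrac14$, hence lies in your set $G'$; yet for $\theta\in(0,c/n)$ a fibre of $\pi_\theta$ is a nearly vertical line meeting all $n$ segments, so $\|\pi_\theta\mu\|_\infty\sim n$ rather than the claimed $c_\theta^{-1}\sim 1$.

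The repair is exactly the paper's argument. For $\theta\in G_\TT(E)$ it is $\pi_\theta^\perp$ that is injective on $E$; the tangential Jacobian of $\pi_\theta^\perp$ along $e_{\theta_0}$ is $|e_\theta^\perp\cdot e_{\theta_0}|=|\sin(2\pi(\theta-\theta_0))|$, which vanishes at $\theta_0$, so one excises a $cs$-neighbourhood of $\theta_0$ (not of $\theta_0\pm\tfrac14$) from $G_\TT(E)$ to obtain $G$ with $\HH(G)\ge s/2$ and $\|\pi_\theta^\perp\mu\|_\infty\lesssim s^{-1}$ on $G$. Then Theorem~\ref{thm:main thm} is applied with $G^\perp$ as the set of good directions. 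Equivalently, your computation goes through verbatim if $G_\TT(E)$ is replaced by $G_\TT(E)^\perp$ throughout.
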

	\begin{proof}
		Let $\theta_0\in \TT$ be such that the line segments comprising $E$ are parallel to $\ell_{\theta_0}$. Set
		\begin{equation*}
		G\coloneqq G_\TT(E)\setminus (\theta_0 - 0.1s,\theta_0+0.1s).
		\end{equation*}
		Let $\theta\in G$ and $y\in\pi_\theta(E)$. Since $E$ avoids the direction $\theta$, we get that $E$ is a graph over $\ell_\theta^\perp$, and it consists of segments forming angle $\measuredangle(\ell_{\theta_0}, \ell_\theta)\sim |\theta-\theta_0|$ with $\ell_{\theta}=(\ell_\theta^\perp)^\perp$. It follows that
		\begin{equation*}
		\pi_\theta^\perp \HH|_E(y) = \lim_{h\to 0} \frac{\HH(E\cap (\pi_{\theta}^\perp)^{-1}((y-h, y+h))}{h} \lesssim \lim_{h\to 0} \frac{|\theta-\theta_0|^{-1}h}{h} \lesssim s^{-1}.
		\end{equation*}
		Hence, $\|\pi_{\theta}^\perp \HH|_E\|_{\infty}\lesssim s^{-1}$. Since
		\begin{equation*}
		\HH(G)\ge \HH(G_\TT(E)) - 0.2 s\ge \frac{s}{2},
		\end{equation*}
		we may apply \thmref{thm:main thm} (with $G^\perp$ instead of $G$) to find the desired Lipschitz graph $\Gamma$ with $\lip(\Gamma)\lesssim_{s,C_0} 1$ and $\HH(\Gamma\cap E)\gtrsim_{s,C_0} \HH(E)$.
	\end{proof}

	We mention another interesting question in the same vein, which is essentially a qualitative version of Question \ref{q:nontriv}. 
	
	It follows from the definition of purely unrectifiable sets and Observation \ref{obs:trivial} that if $E$ is purely unrectifiable and $\HH(E)>0$, then $D_\TT(E)$ is dense in $\TT$. What can be said about $\HH(D_\TT(E))$?
	\begin{question}
		Suppose that $E\subset\R^2$ is purely unrectifiable, and $0<\HH(E)<\infty$. Do we have
		\begin{equation*}
		\HH(D_{\TT}(E))=\HH(\TT)?
		\end{equation*}
	\end{question}
	The answer is yes for \emph{homogeneous sets} (examples of which include self-similar sets satisfying the strong separation condition for which the linear parts of the similarities contain no rotations) by \cite[Proposition 3.1]{rossi2019holder}; in fact, for such sets Rossi and Shmerkin proved that $D_\TT(E)=\TT$. To the best of our knowledge, the question is open for general purely unrectifiable sets. Up until recently it wasn't even clear if $\dim_H(D_{\TT}(E))=1$, but this follows from a recent paper of Orponen, Shmerkin, and Wang \cite{orponen2022kaufman}.
	
	\subsection{Plan of the article} In Section \ref{sec:sketch} we sketch the proof of \thmref{thm:main thm}. In Section \ref{sec:prel} we introduce some notation, list all the parameters appearing in the proof, and remind some useful results from \cite{chang2017analytic} and \cite{dabrowski2020cones}. In Section \ref{sec:mainprop} we state our main proposition, \propref{prop:main prop}, and we show how it can be used to prove \thmref{thm:main thm}. We prove the main proposition in Sections \ref{sec:rectangles}--\ref{sec:keygeometric}.
	In Section \ref{sec:rectangles} we introduce a ``dyadic grid of rectangles'' adapted to \propref{prop:main prop}, and we prove some basic measure estimates on these rectangles. Section \ref{sec:conical} contains a stopping time argument and a corona decomposition involving conical energies. In Sections \ref{sec:interior energy}--\ref{sec:keygeometric} we estimate these energies. Finally, in Appendix \ref{sec:appendix} we prove one of the results from Section \ref{sec:prel}.
	
	\subsection*{Acknowledgments} 
	I am grateful to Alan Chang, Tuomas Orponen, Xavier Tolsa, and Michele Villa for inspiring discussions.
	
	I was supported by the Academy of Finland via the projects \emph{Incidences on Fractals}, grant No. 321896, and \emph{Quantitative rectifiability and harmonic measure beyond the Ahlfors-David-regular setting}, grant No. 347123.
	
	\section{Sketch of the proof}\label{sec:sketch}
	Suppose that $E\subset\R^2$ is bounded and AD-regular, $\mu=\HH|_E$, $G\subset\TT$ satisfies $\HH(G)\gtrsim 1$, and for all $\theta\in G$ we have $\|\pi_\theta\mu\|_\infty\lesssim 1$. Using \propref{prop:changtolsa}, which is a result from \cite{chang2017analytic}, it is easy to show that this implies
	\begin{equation}\label{eq:13}
	\int_{\R^2}\int_0^{\diam(E)}\frac{\mu(X(x,G^\perp,r))}{r}\, \frac{dr}{r}d\mu(x)\lesssim \mu(E),
	\end{equation}
	where $X(x,G^\perp,r)= X(x,G^\perp)\cap B(x,r)$, and $X(x,G^\perp)$ is the union of lines passing through $x$ with directions perpendicular to those from $G$. See \secref{sec:notation} for the precise definition.
	
	Estimate \eqref{eq:13} is reminiscent of \propref{prop:conical energy}, which was observed in \cite{dabrowski2020cones} but is essentially due to \cite{martikainen2018characterising}. This result says that if the estimate \eqref{eq:13} holds with $G$ which is a large interval, then one can find a big piece of a Lipschitz graph inside $E$. The problem is, the set $G$ given by \thmref{thm:main thm} may be a very complicated set, possibly consisting of many tiny intervals, or not containing any intervals at all.
	
	This issue is addressed by our main proposition, \propref{prop:main prop}. Roughly speaking, it says that if we start with a set of ``good directions'' $G_J$ which almost fills an interval $J$, then the goodness of $G_J$ propagates to all of $J$, and even to the enlarged interval $3J$. More precisely, given an interval $J\subset\TT$, possibly very short, and a set $G_J\subset J$ with $\HH(J\setminus G_J)\le \varepsilon\HH(J)$, where $\varepsilon>0$ is very small, and under some additional technical assumptions involving $\|\pi_\theta\mu\|_{\infty}$, one has
	\begin{multline}\label{eq:14}
	\int_{E}\int_0^{\diam(E)} \frac{\mu(X(x,3J,r))}{r}\, \frac{dr}{r}d\mu(x)\\
	\le C_{\mathsf{Prop}}\bigg(\int_{E}\int_0^{\diam(E)} \frac{\mu(X(x,G_J,r))}{r}\, \frac{dr}{r}d\mu(x) + \HH(J)\mu(E)\bigg).
	\end{multline}
	 Crucially, the constants $\varepsilon$ and $C_{\mathsf{Prop}}$ do not depend on $\HH(J)$.
	
	Using the idea of the good set $G$ propagating and becoming larger, we are able to apply \propref{prop:main prop} iteratively, so that after a bounded number of iterations we end up with an estimate \eqref{eq:13} with the set $G$ replaced by some interval $J_0$ with $\HH(J_0)\sim 1$. This allows us to use \propref{prop:conical energy} to obtain a big piece of Lipschitz graph inside $E$. All of this is done in Section \ref{sec:mainprop}, assuming that \propref{prop:main prop} is true. The remainder of the paper is dedicated to the proof of \propref{prop:main prop}.
	
	In Section \ref{sec:rectangles} we consider a ``dyadic lattice of rectangles'' $\DD=\bigcup_k\DD_k$, where each $\DD_k$ is a partition of $E$. The rectangles we work with have a very large, but fixed, aspect ratio equal to $\HH(J)^{-1}$, and they all point in the same direction, corresponding to the mid-point of $J$. A priori, the fact that $\mu$ is AD-regular only tells us that a rectangle $Q\in\DD$ satisfies
	\begin{equation*}
	\ell(Q)\lesssim \mu(Q)\lesssim \HH(J)^{-1}\ell(Q),
	\end{equation*}
	where $\ell(Q)$ denotes the length of the shorter side of $Q$. This is no good: it is crucial that our estimates do not explode as $\HH(J)\to 0$. Luckily, due to one of the assumptions on $\|\pi_\theta\mu\|_{\infty}$, we show in \lemref{lem:ADRrectangles} that $\mu(Q)\sim \ell(Q)$. So in a sense, we need the $L^\infty$-norm in \eqref{eq:projbdd}, and not just the $L^2$-norm as in \thmref{thm:martikainenorpo}, to ensure that our rectangles are ``AD-regular''.
	
	In Section \ref{sec:conical} we introduce conical energies $\E_G(Q)$ and $\E_J(Q)$, associated to $G_J$ and $3J$, respectively. They are essentially local versions of double intergals from \eqref{eq:14}, so that
	\begin{equation*}
	\int_{\R^2}\int_0^{\diam(E)}\frac{\mu(X(x,G_J,r))}{r}\, \frac{dr}{r}d\mu(x)\sim \sum_{Q\in\DD}\E_G(Q)\mu(Q),
	\end{equation*}
	and an analogous estimate holds for $3J$ and $\E_J(Q)$.
	Inspired by \cite{chang2017analytic}, we conduct a stopping time argument and a corona decomposition of $\DD$ into a family of trees $\Tree(R),\ R\in\Top$. What we gain is that for any $R\in\Top$ and most $x\in R$ the cone $X(x,G_J)$ does not intersect $E$ at the scales associated to $\Tree(R)$.
	
	In Sections \ref{sec:interior energy} and \ref{sec:exterior energy} we prove that for any $R\in\Top$
	\begin{equation*}
	\sum_{Q\in\Tree(R)}\E_J(Q)\mu(Q)\lesssim \sum_{Q\in\Tree(R)}\E_G(Q)\mu(Q) + \HH(J)\mu(R),
	\end{equation*}
	which is enough to obtain \eqref{eq:14}. To prove the estimate above, we divide $\E_J(Q)$ into an ``interior'' conical energy $\E^{int}_J(Q)$ associated to $0.5J$, and an ``exterior'' conical energy $\E^{ext}_J(Q)$ associated to $3J\setminus 0.5J$. In Section \ref{sec:interior energy} we deal with the interior part. This is another important point where we use the technical assumptions related to $\|\pi_\theta\mu\|_\infty$: together with AD-regularity of $E$ they allow us to get a strong, pointwise estimate $\E^{int}_J(Q)\lesssim \E_G(Q)$. As a corollary, we get that for $R\in\Top$ and all $x\in R$ the cone $X(x,0.5J)$ does not intersect $E$ at the scales associated to $\Tree(R)$.
	
	Finally, in Section \ref{sec:exterior energy} we estimate the exterior energy $\E^{ext}_J(Q)$. The argument uses the key geometric lemma of this article, \lemref{lem:key geometric lemma}, which we prove in Section \ref{sec:keygeometric}. The proof is purely geometric, and we believe it is the true heart of this article. 
	
	A simplified version of \lemref{lem:key geometric lemma} says the following:
	\begin{lemma*}[simplified]
		Let $A\subset B(0,1)\subset\R^2$ be an AD-regular sets consisting of horizontal segments. Let $J\subset\TT$  be an interval such that $\HH(J)\le c$ for a small absolute constant $c>0$, and such that $X(0,J)$ contains the vertical axis. Assume that
		\begin{equation*}
		A\cap X(x,J)=\{x\}\quad\text{for every $x\in A$.}
		\end{equation*}
		Suppose that there is a point $y\in A$ and a scale $r\in (0,1)$ such that
		\begin{equation*}
		A\cap X(y, 3J,2r)\setminus B(y,r) \neq\varnothing.
		\end{equation*}
		Then, there exists an interval $K\subset\R$, which is a connected component of $\R\setminus\pi_0(A)$ (where $\pi_0$ is the projection to the horizontal axis), such that $\HH(K)\sim \HH(J)r$ and $\pi_0(y)\in CK$ for some absolute $C\ge 1$. 
	\end{lemma*} 	
	It is not too difficult to show using this lemma that a set $A$ as above satisfies
	\begin{equation*}
	\int_A\int_0^{\diam(A)}\frac{\HH(A\cap X(x,3J,r))}{r}\, \frac{dr}{r}d\HH(x) \lesssim \HH(J)\HH(A).
	\end{equation*}
	This is essentially where the last term in \eqref{eq:14} comes from.
	\section{Preliminaries}\label{sec:prel}
	\subsection{Notation}\label{sec:notation}
	Given $x\in\R^2$ and $\theta\in\TT$ we set
	\begin{align*}
	e_\theta &\coloneqq (\cos(2\pi\theta),\sin(2\pi\theta))\in\mathbb{S}^1,\\
	\pi_\theta (x)&\coloneqq e_\theta \cdot x,\\
	\ell_{x,\theta}&\coloneqq x+\spn(e_\theta),\\
	\ell_\theta&\coloneqq \ell_{0,\theta}.
	\end{align*}
	
	For $x\in\R^2$ and a measurable set $I\subset \TT$ we define the cone centered at $x$ with directions in $I$ as
	\begin{equation*}
	X(x,I) = \bigcup_{\theta\in I}\ell_{x,\theta}.
	\end{equation*}
	Note that we do not require $I$ to be an interval. We also set $I^\perp = I+1/4$.
	
	For $0<r<R$ we define truncated cones as
	\begin{gather*}
	X(x,I,r) = X(x,I)\cap B(x,r),\\
	X(x,I,r,R) = X(x,I,R)\setminus B(x,r).
	\end{gather*}
	
	In case $I = [\theta - a, \theta + a]$, we have an algebraic characterization of $X(x,I)$: $y\in X(x,I)$ if and only if
	\begin{equation}\label{eq:cone algebraic}
	|\pi^\perp_{\theta}(y)-\pi^\perp_{\theta}(x)|\le \sin(2\pi a)|x-y|.
	\end{equation}

	We will denote by $\Delta$ the usual family of half-open dyadic intervals on $[0,1)\simeq \TT$. If $J\in \Delta$, then $\Delta(J)$ denotes the collection of dyadic intervals contained in $J$. For $I\in\Delta\setminus\{[0,1)\}$, the notation $I^1$ will be used for the dyadic parent of $I$.
	
	Given an interval $I\subset\TT$ and $C>0$, we will write $CI$ to denote the interval with the same midpoint as $I$ and length $C\HH(I)$.
	
	The closure of a set $A$ will be denoted by $\overline{A}$, and its interior by $\mathrm{int}(A)$.
	\subsection{Constants and parameters} Whenever we write $f\lesssim g$, this should be understood as ``there exists an absolute constant $C>0$ such that $f\le Cg$.'' We will write $f\lesssim_A g$ if we allow the constant $C$ to depend on some parameter $A$. We also write $f\sim g$ to denote $g\lesssim f\lesssim g$, and similarly $f\sim_A g$ stands for $g\lesssim_A f\lesssim_A g$.
	
	Throughout the proof we use many constants and parameters. We list the most important ones here for reader's convenience. The notation $C_1=C_1(C_2)$ means ``$C_1$ is a parameter whose value depends on the value of parameter $C_2$''.
	\begin{itemize}
		\item $C_0\ge 1$ is the AD-regularity constant of the set $E$.
		\item $M\ge 1$ is the constant bounding the $L^\infty$-norm of projections in the assumptions of \thmref{thm:main thm} and \propref{prop:main prop}.
		\item $s\in (0,1)$ is the constant from the assumption $\HH(G)\ge s$ in \thmref{thm:main thm}.
		\item $\varepsilon=\varepsilon(C_0,M)\in(0,1)$ is a constant appearing in \propref{prop:main prop}, see \eqref{eq:Gmeas}. It is chosen in \lemref{lem:filling gaps}. One could take $\varepsilon=c C_0^{-1}M^{-1}$ for some small absolute $c\in (0,1)$.
		\item $C_{\mathsf{Prop}}=C_{\mathsf{Prop}}(C_0,M)>1$ is a big constant appearing in the conclusion of \propref{prop:main prop}.
		\item $c_1\in (0,1)$ is a small absolute constant appearing in the assumption $\HH(J)\le c_1 C_0^{-1} M^{-1}$ of \propref{prop:main prop}. It is fixed above \eqref{eq:Pitall}.
		\item $\rho=1/1000$ is the constant from \thmref{thm:dyadic cubes}, so that for $Q\in\DD_k$ we have $\ell(Q)=4\rho^k$.
		\item $A=A(C_0,M)\ge 1000$ is a large constant appearing in the definition of $\E_G(Q)$ \eqref{eq:energydef}. It is fixed in \lemref{lem:AinGX}, one could take $A=C C_0M$ for some absolute $C\ge 1000$. 
		\item $\delta=\delta(A,M,C_0)\in (0,1)$ is the $\BCE$-parameter, appearing in \eqref{eq:BCE def}. It is fixed in \lemref{lem:empty cones}.
		\item $N\sim C_0 M$ is a parameter appearing in the definition of rectangles $\cG_i$, below \eqref{eq:Ndef}. It's exact value is chosen in \lemref{lem:good-rectangle}.
	\end{itemize}
	
	\subsection{Useful results on cones and projections}
	We recall some results that will be useful in our proof. The proposition below is a simplified version of Corollary 3.3 from {\cite{chang2017analytic}}.
	\begin{prop}\label{prop:changtolsa}
		Let $\mu$ be a finite, compactly supported Borel measure on $\R^2$, and $I\subset \TT$ an open set. Then,
		\begin{equation*}
		\int_{\R^2} \int_0^\infty \frac{\mu(X(x,I,r))}{r}\, \frac{dr}{r}d\mu(x) \lesssim \int_{I}\|\pi_\theta^\perp \mu\|_2^2\, d\theta.
		\end{equation*}
	\end{prop}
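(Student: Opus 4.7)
The plan is to use Plancherel's identity to reduce both sides of the desired inequality to a common Fourier-side integral, namely $\int_{X(0,I^\perp)} |\widehat{\mu}(\xi)|^2 |\xi|^{-1} \, d\xi$, after which the bound is immediate. In fact the two sides end up equal up to an absolute constant, so the conclusion is substantially stronger than the stated $\lesssim$.

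The first step is Fubini--Tonelli on the left-hand side. Writing $\mu(X(x,I,r)) = \int \one_{y-x \in X(0,I)} \one_{|y-x|<r} \, d\mu(y)$ and computing the $r$-integral as $\int_{|y-x|}^{\infty} r^{-2} \, dr = |y-x|^{-1}$, one obtains
\begin{equation*}
\text{LHS} = \iint \frac{\one_{y-x \in X(0,I)}}{|y-x|} \, d\mu(x) \, d\mu(y) = \int h(z) \, d\nu(z),
\end{equation*}
where $h(z) := \one_{z \in X(0,I)}/|z|$ and $\nu := \mu \ast \check\mu$ with $\check\mu(A) := \mu(-A)$. A crucial structural observation is that $X(0,I)$ is centrally symmetric in $\R^2$, since $\ell_{0,\theta} = \ell_{0,\theta+1/2}$; equivalently, $X(0,I) = X(0, I \cup (I+1/2))$. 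Hence $h$ is real and even, $\widehat{h}$ is real-valued, and Plancherel yields $\text{LHS} = \int \widehat{h}(\xi) \, |\widehat{\mu}(\xi)|^2 \, d\xi$.

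The second step is to compute $\widehat{h}$ by switching to polar coordinates $z = r e_\varphi$, $\xi = |\xi| e_\psi$ and applying the distributional identity $\int_0^\infty e^{-iAr} \, dr = \pi \delta(A) - i \, \mathrm{P.V.}(1/A)$ to the resulting radial integral. The principal-value contribution is odd under $\varphi \mapsto \varphi + 1/2$ and so integrates to zero over the $\frac{1}{2}$-symmetric angular domain $I \cup (I+1/2)$, while the Dirac part, evaluated at the roots $\varphi = \psi \pm 1/4$ of $\cos(2\pi(\varphi-\psi))$, yields $\widehat{h}(\xi) = |\xi|^{-1} \one_{\xi \in X(0,I^\perp)}$ up to an absolute constant. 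Thus $\text{LHS} \lesssim \int_{X(0,I^\perp)} |\widehat{\mu}(\xi)|^2 |\xi|^{-1} \, d\xi$. For the right-hand side, one-dimensional Plancherel gives $\|\pi_\theta^\perp \mu\|_2^2 = \int_\R |\widehat{\mu}(\tau e_\theta^\perp)|^2 \, d\tau$, and the polar change of variables $\xi = \tau e_\theta^\perp$ (whose Jacobian is $d\theta \, d\tau = d\xi/(2\pi|\xi|)$ and whose image, as $(\theta,\tau)$ ranges over $I \times \R$, is precisely $X(0,I^\perp)$) converts $\int_I \|\pi_\theta^\perp \mu\|_2^2 \, d\theta$ into $\frac{1}{2\pi}\int_{X(0,I^\perp)} |\widehat{\mu}(\xi)|^2 |\xi|^{-1} \, d\xi$, which matches the LHS up to an absolute multiplicative constant.

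The main technical obstacle is the rigorous justification of the distributional Fourier calculation for $\widehat{h}$ together with the application of Plancherel against the convolved measure $\nu$, which may be singular since $\mu$ is only a finite Radon measure. The standard remedy is a mollification: replace $h$ by a regularization $h_\varepsilon$ smoothly cutting off the cone indicator and excising a neighborhood of $|z|=0$, for which $\widehat{h_\varepsilon}$ is a bona fide function and Plancherel applies directly. Perform the polar computation and the symmetry cancellation of the principal value at the level of $h_\varepsilon$, and then pass to the limit $\varepsilon \to 0$ using monotone convergence on the spatial side and the nonnegativity of $|\widehat{\mu}|^2$ on the Fourier side.
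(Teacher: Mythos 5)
The paper does not prove Proposition \ref{prop:changtolsa}: it is imported from \cite[Corollary 3.3]{chang2017analytic}, and your Fourier-analytic argument is essentially the proof given there --- both rewrite the conical energy as $\iint \one_{X(0,I)}(x-y)\,|x-y|^{-1}\, d\mu(x)\, d\mu(y)$, identify the Fourier transform of the homogeneous kernel $\one_{X(0,I)}(z)/|z|$ with $c\,\one_{X(0,I^{\perp})}(\xi)/|\xi|$ using the central symmetry of the cone, and compare with the polar decomposition of $\int_I \|\pi_\theta^{\perp}\mu\|_2^2\, d\theta$ coming from the projection--slice theorem. The outline is correct; in a complete write-up you should additionally note (i) that when $I$ and $I+1/2$ overlap, the change of variables $(\theta,\tau)\mapsto \tau e_{\theta}^{\perp}$ covers part of $X(0,I^{\perp})$ twice, which only strengthens the inequality in the direction you need, and (ii) that the value of $\widehat{h}$ in directions belonging to $\partial I\pm 1/4$ is ambiguous (a genuine issue if $\partial I$ has positive measure); the cleanest fix is to exhaust the open set $I$ from inside by finite unions of intervals compactly contained in $I$ and pass to the limit by monotone convergence on the spatial side, rather than only mollifying the kernel in $|z|$.
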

	We remark that the estimate above is equality if $\mu$ is given by a Schwartz function, see Proposition 3.2 in \cite{chang2017analytic}. For general measures, a partial converse inequality can be found in Appendix A of \cite{chang2017analytic}. In this article we will only need the following corollary of Proposition \ref{prop:changtolsa}.
	\begin{cor}\label{cor:changtolsa}
		Let $E\subset \R^2$ and $G\subset\TT$ be as in \thmref{thm:main thm}, and let $\mu = \HH|_E$. Then,
		\begin{equation*}
			\int_{\R^2} \int_0^\infty \frac{\mu(X(x,G^\perp,r))}{r}\, \frac{dr}{r}d\mu(x) \lesssim M\HH(G)\mu(E),
		\end{equation*}
		where $G^\perp = G + 1/4$.
	\end{cor}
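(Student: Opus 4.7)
The plan is to apply \propref{prop:changtolsa} with $I = G^\perp$ and then estimate the resulting integral via the trivial interpolation $\|f\|_2^2 \le \|f\|_\infty \|f\|_1$, which is exactly where the hypothesis $\|\pi_\theta \mu\|_\infty \le M$ for $\theta \in G$ enters.

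Concretely, I would first apply \propref{prop:changtolsa} to obtain
\begin{equation*}
\int_{\R^2} \int_0^\infty \frac{\mu(X(x,G^\perp,r))}{r}\, \frac{dr}{r}\,d\mu(x) \lesssim \int_{G^\perp}\|\pi_\theta^\perp \mu\|_2^2\, d\theta.
\end{equation*}
The set $G^\perp$ is only $\HH$-measurable, whereas \propref{prop:changtolsa} is stated for open sets. This is a mild nuisance that can be bypassed either by invoking the original result of Chang--Tolsa in \cite{chang2017analytic}, which is formulated for measurable sets, or by a quick outer-regularity argument: approximate $G^\perp$ by a decreasing sequence of open neighbourhoods $U_n$, apply the proposition at each finite stage, and pass to the monotone limit on the left, using the uniform pointwise bound $\|\pi_\theta^\perp \mu\|_2^2 \le M\mu(E)$ on $G^\perp$ to control the right-hand side.

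Next, I would perform the change of variables $\theta = \eta + 1/4$. Using $e_{\eta+1/2} = -e_\eta$ to observe $\pi_\theta^\perp = \pi_{\theta+1/4} = -\pi_\eta$, we have $\|\pi_\theta^\perp \mu\|_p = \|\pi_\eta\mu\|_p$ for every $p$, so
\begin{equation*}
\int_{G^\perp}\|\pi_\theta^\perp \mu\|_2^2\, d\theta = \int_{G}\|\pi_\eta \mu\|_2^2\, d\eta.
\end{equation*}
Finally, for each $\eta \in G$ the interpolation $\|\pi_\eta \mu\|_2^2 \le \|\pi_\eta \mu\|_\infty \|\pi_\eta \mu\|_1 \le M \mu(E)$---using assumption \eqref{eq:projbdd} together with the trivial identity $\|\pi_\eta \mu\|_1 = \mu(\R^2) = \mu(E)$---gives, after integrating over $G$, the desired bound $M\HH(G)\mu(E)$.

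The only non-routine point in the whole argument is the openness/measurability discrepancy in applying \propref{prop:changtolsa}, and even that is standard; everything else is a direct computation. I expect no genuine obstacle.
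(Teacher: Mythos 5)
Your handling of the open case is exactly the paper's: apply \propref{prop:changtolsa}, rotate by $1/4$, and use $\|\pi_\theta\mu\|_2^2\le\|\pi_\theta\mu\|_\infty\|\pi_\theta\mu\|_1\le M\mu(E)$. That part is fine. The gap is in what you dismiss as "a mild nuisance": the reduction from a merely measurable $G$ to an open set is the entire content of this corollary, and neither of your two proposed fixes works. Your outer-regularity argument applies \propref{prop:changtolsa} to an open neighbourhood $U_n\supset G^\perp$ and produces the bound $\int_{U_n}\|\pi_\theta^\perp\mu\|_2^2\,d\theta$. The hypothesis \eqref{eq:projbdd} controls the integrand only on $G^\perp$; on $U_n\setminus G^\perp$ you have no information whatsoever, and $\|\pi_\theta^\perp\mu\|_2$ can be $+\infty$ there even for very nice $E$ (for $E=\mathbb{S}^1$ the projected density behaves like $(1-y^2)^{-1/2}$ near the endpoints, which is not in $L^2$; for $E$ containing a segment perpendicular to $\ell_\theta$ the projection has an atom). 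The fact that $\HH(U_n\setminus G^\perp)\to 0$ is useless against an integrand that may be identically $+\infty$ on that set, so the right-hand side of your approximation need not converge to anything finite. Your alternative fix --- citing Chang--Tolsa directly for measurable sets --- is not available either: the source result is for cones over intervals (whence the paper states \propref{prop:changtolsa} for open sets, which decompose into intervals), and the author explicitly flags the measurable case as requiring a separate argument.

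What the paper actually does to close this gap is substantially more than measure-theoretic bookkeeping on the set of directions: it first invokes the \emph{qualitative} Besicovitch projection theorem (\thmref{thm:BesFed}) to conclude from \eqref{eq:projbdd} and $\HH(G)>0$ that $E$ is rectifiable, writes $E=\bigcup_i\Gamma_i\cup Z$ with $\Gamma_i$ pieces of $C^1$ graphs, and truncates to $E_N=\bigcup_{i=1}^N\Gamma_i$. The point of the truncation is that the $L^\infty$ bound on $G$ forces the tangents of each $\Gamma_i$ to be quantitatively transverse to $\ell_\theta^\perp$ for $\theta\in G$, which yields $\|\pi_{\theta'}\mu_N\|_\infty\lesssim NM$ for \emph{all} $\theta'$ in a $cM^{-1}$-neighbourhood of $G$ --- a finite, $N$-dependent bound on the exceptional set $G_k\setminus G$ that your argument lacks. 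Then the error term $MN\HH(G_k\setminus G)\mu(E)$ genuinely tends to $0$ as $k\to\infty$ for fixed $N$, and a final Fatou/Fubini argument removes the truncation in $N$. Without some device of this kind (or an independent proof of the conical energy bound directly for measurable direction sets), your proof does not go through.
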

	If $G$ is open, then this follows almost immediately from \propref{prop:changtolsa}. The case of a general measurable set $G$ is a long and uninspiring exercise in measure theory, so we postpone it to the appendix.

	The following result is a simplified version of Proposition 10.1 from \cite{dabrowski2020cones}, which in turn is a consequence of Proposition 1.12 from \cite{martikainen2018characterising}.
	\begin{prop}\label{prop:conical energy}
		Let $E\subset\R^2$ be a bounded AD-regular set with constant $C_0$. Let $F\subset E$ be such that $\HH(F)\ge \kappa \HH(E)$.
		Assume there exists an interval $J\subset \TT$ with $\HH(J)=s$ such that for $\HH$-a.e. $x\in F$
		\begin{equation*}
			\int_0^1 \frac{\HH(X(x,J,r)\cap F)}{r}\, \frac{dr}{r}\le M.
		\end{equation*}
		Then, there exists a Lipschitz graph $\Gamma\subset\R^2$ with $\lip(\Gamma)\lesssim_{s} 1$ and
		\begin{equation*}
			\HH(F\cap \Gamma)\gtrsim_{C_0,s,M,\kappa} \HH(F).
		\end{equation*}
	\end{prop}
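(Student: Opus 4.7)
The plan is to reduce the hypothesis---an integral bound on conical energies---to a pointwise empty cone condition on a large subset of $F$, and then to invoke Observation~\ref{obs:trivial}. First I would apply Chebyshev's inequality to the hypothesis integrated against $d\HH|_F$ to pass to a subset $F_1 \subset F$ with $\HH(F_1) \ge (1-\tfrac{1}{2}\kappa)\HH(F)$ on which the pointwise conical energy is bounded by some $C(M,\kappa)$, and further restrict to Lebesgue density-one points of $F_1$ with respect to $\HH|_E$, which costs nothing in measure.

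Next, I would fix a closed subinterval $J'\subset\mathrm{int}(J)$ with $\HH(J')=s/2$ separated from $\TT\setminus J$ by a gap $\gtrsim s$. The key geometric fact is that if $x\ne y$ are both in $F_1$ with $y\in X(x,J',r)$ and $|x-y|\sim r$, then $B(y,csr)\subset X(x,J,2r)$ for a small absolute $c>0$; combined with AD-regularity of $E$ and the density assumption on $y$, this gives $\HH(F\cap B(y,csr))\gtrsim_{C_0} sr$, contributing $\gtrsim s/C_0$ to the integrand $\HH(X(x,J,2r)\cap F)/(2r)$ at scale $\sim r$. The pointwise energy bound then forces the set of scales $r\in (0,1)$ on which $F_1\cap X(x,J',r)\setminus B(x,r/2)\ne\varnothing$ to have logarithmic measure at most $C(C_0,M,\kappa)/s$, uniformly in $x\in F_1$.

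A pigeonhole and Vitali covering argument on this logarithmic scale-bound would then produce $F_2\subset F_1$ with $\HH(F_2)\gtrsim \HH(F)$ and a uniform scale $r_0=r_0(C_0,s,M,\kappa)>0$ such that $F_2\cap X(x,J')\cap B(x,r_0)=\{x\}$ for every $x\in F_2$. Covering $F_2$ by balls of radius $r_0$ centered at points of $F_2$ and applying Observation~\ref{obs:trivial} on each such ball $B$ expresses $F_2\cap B$ as a subset of a Lipschitz graph of slope $\lesssim s^{-1}$; a Whitney-type patching (in the spirit of \cite[Lemma~15.13]{mattila1999geometry}) glues these local graphs into a global Lipschitz graph $\Gamma$ with $\lip(\Gamma)\lesssim s^{-1}$ and $\HH(\Gamma\cap F)\gtrsim_{C_0,s,M,\kappa}\HH(F)$.

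The main obstacle will be upgrading the \emph{integral} energy bound to the \emph{pointwise} empty cone condition at a uniform scale $r_0$: this is the stopping-time / Vitali step, and making the constants explicit and scale-invariant requires care, especially since $F$ itself need not be AD-regular, so one must lean on AD-regularity of $E$ together with the density reduction to produce the lower bound on $\HH(F\cap B(y,csr))$. The measure-theoretic reduction to density points and the final patching into one global graph are technically manageable but require bookkeeping.
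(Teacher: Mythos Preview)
The paper does not prove this proposition; it is quoted as a simplified form of \cite[Proposition~10.1]{dabrowski2020cones}, which is in turn extracted from \cite[Proposition~1.12]{martikainen2018characterising}. Your outline attempts a direct argument, and you correctly identify the crux as upgrading the integral energy bound to an empty-cone condition at a uniform scale. But two of your steps do not go through.

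First, the density reduction does not deliver the lower bound you claim. Restricting to Lebesgue density-one points of $F_1$ in $E$ is an \emph{asymptotic} statement: it gives $\HH(F_1\cap B(y,\rho))/\HH(E\cap B(y,\rho))\to 1$ as $\rho\to 0$, with no control on the rate. The claim $\HH(F\cap B(y,csr))\gtrsim_{C_0} sr$ at the \emph{macroscopic} scale $csr$ does not follow; $F$ may have arbitrarily small relative density in $E$ at that particular radius near $y$. Without this, a single point $y\in F_1$ in the annular cone contributes nothing to the energy integral (it has measure zero), and the bound on the logarithmic measure of bad scales collapses.

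Second, even granting a uniform bound $N$ on the number of bad dyadic scales per point, the pigeonhole cannot produce $r_0$ depending only on $(C_0,s,M,\kappa)$. One gets $\sum_k\HH(B_k)\le N\HH(F_1)$ for the bad sets $B_k$ at scale $2^{-k}$, but the tail $\sum_{k>K}\HH(B_k)$ need not be small for any $K$ depending only on $N$: the bad scales for different points can sit arbitrarily deep. You can extract \emph{some} $r_0>0$ by continuity of measure, but it will depend on $F$. Covering $F_2$ by balls of radius $r_0$ and selecting one local graph by pigeonhole then loses a factor comparable to $r_0^{-1}$, destroying the uniform conclusion $\HH(\Gamma\cap F)\gtrsim_{C_0,s,M,\kappa}\HH(F)$. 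Note also that \cite[Lemma~15.13]{mattila1999geometry} is precisely Observation~\ref{obs:trivial} and requires a \emph{global} empty-cone condition; it does not glue local graphs into a single one.

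The arguments in the cited references sidestep both issues by running a stopping time on dyadic cubes of $E$, stopping a cube when either the density of $F$ in it drops below a fixed threshold or the conical energy accumulated along the branch exceeds a threshold. On the non-stopped region one has \emph{simultaneously} uniform lower density of $F$ and an empty-cone condition at every scale down to the leaves, so that region lies on a single Lipschitz graph; the stopped cubes have total measure controlled respectively by $\HH(E\setminus F)$ and by the energy hypothesis, with constants independent of $F$. This coupling of the density control and the cone control inside one stopping time is the mechanism your outline is missing.
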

		
	\section{Main proposition and proof of \thmref{thm:main thm}}\label{sec:mainprop}
	The following is our main proposition.
	\begin{prop}\label{prop:main prop}
		Let $1\le C_0,M<\infty$. There exist constants $0<\varepsilon<1<C_{\mathsf{Prop}}<\infty$, which depend on $M, C_0$, such that the following holds.		
		Assume that:
		\begin{enumerate}[label=({\alph*})]
			\item $E\subset \R^2$ is a bounded AD-regular set with constant $C_0$, and set $\mu = \HH|_{E}$,
			\item $J\subset \TT$ is an interval with $\HH(J)\le c_1 C_0^{-1} M^{-1}$, where $c_1>0$ is a small absolute constant,
			\item there exists $\theta_0\in 3J$ such that $\|\pi_{\theta_0}^\perp\mu\|_\infty \le M$,
			\item $G\subset J$ is a closed set which satisfies
			\begin{equation}\label{eq:Gmeas}
				\HH(G)\ge (1-\varepsilon)\HH(J),
			\end{equation}
			\item for every interval $I$ which is a connected component of $J\setminus G$ there exists $\theta_I\in 3I$ such that $\|\pi_{\theta_I}^\perp\mu\|_\infty \le M$,
		\end{enumerate}		
		Then,
		\begin{multline*}
		\int_{E}\int_0^{\diam(E)} \frac{\mu(X(x,3J,r))}{r}\, \frac{dr}{r}d\mu(x)\\
		\le C_{\mathsf{Prop}}\bigg(\int_{E}\int_0^{\diam(E)} \frac{\mu(X(x,G,r))}{r}\, \frac{dr}{r}d\mu(x) + \HH(J)\mu(E)\bigg).
		\end{multline*}
	\end{prop}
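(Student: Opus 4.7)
The plan is to follow the roadmap sketched in Section~\ref{sec:sketch}. First I would introduce a dyadic lattice $\DD$ of rectangles adapted to $J$: each rectangle has long axis parallel to $e_{\theta_0}^\perp$ and aspect ratio $\HH(J)^{-1}$. AD-regularity of $E$ alone only yields $\ell(Q)\lesssim \mu(Q)\lesssim \HH(J)^{-1}\ell(Q)$, which would make the subsequent estimates degenerate as $\HH(J)\to 0$. To promote this to $\mu(Q)\sim \ell(Q)$, I would exploit hypothesis (c): such a rectangle is contained in a strip of width $\ell(Q)$ transverse to $e_{\theta_0}^\perp$, so $\mu(Q)\le \|\pi_{\theta_0}^\perp\mu\|_\infty\cdot \ell(Q)\le M\ell(Q)$. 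This is the crucial \emph{$L^\infty$ versus $L^2$} point flagged in the sketch.

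I would then discretize both sides of the target inequality by defining local conical energies $\E_G(Q)$ and $\E_J(Q)$ summing contributions from dyadic annuli around a base point of $Q$, so that $\sum_Q \E_G(Q)\mu(Q)$ is comparable to $\int_E\int_0^{\diam(E)}\mu(X(x,G,r))/r\cdot dr/r\, d\mu(x)$ and similarly for $3J$. Next, a stopping-time corona decomposition partitions $\DD$ into trees $\Tree(R)$, indexed by $R\in\Top$, stopping whenever $\E_G$ saturates. A standard Carleson packing argument then gives $\sum_{R\in\Top}\mu(R)\lesssim \sum_Q\E_G(Q)\mu(Q)+\mu(E)$, so it would suffice to establish, for each tree,
\begin{equation*}
\sum_{Q\in\Tree(R)}\E_J(Q)\mu(Q)\lesssim \sum_{Q\in\Tree(R)}\E_G(Q)\mu(Q)+\HH(J)\mu(R).
\end{equation*}
By construction, for $x\in R$ the cone $X(x,G)$ is essentially $\mu$-empty at tree scales.

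I would then split $\E_J(Q)=\E_J^{int}(Q)+\E_J^{ext}(Q)$ along $3J=0.5J\sqcup (3J\setminus 0.5J)$. For the interior piece, the only directions in $0.5J\setminus G$ come from gaps $I\subset J\setminus G$; for each such gap, the cone $X(x,I,r)$ projects under $\pi_{\theta_I}^\perp$ onto an interval of length $\lesssim r\HH(I)$, so hypothesis (e) gives $\mu(X(x,I,r))\lesssim Mr\HH(I)$. Summing over gaps and using $\sum_I\HH(I)\le \varepsilon\HH(J)$ produces a pointwise domination $\E_J^{int}(Q)\lesssim \E_G(Q)$, provided $\varepsilon$ is chosen small in terms of $C_0,M$.

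The exterior piece is where the key geometric lemma enters, and this is the step I expect to be the main obstacle. For $x\in R\in\Top$, the stopping construction ensures that $X(x,G,r)$ carries no mass at tree scales; yet $X(x,3J,r)$ may still hit $E$ outside $B(x,r/2)$. The lemma would assert that such an event forces a genuine gap $K$ in $\pi_{\theta_0}(E)$ of length $\sim \HH(J)r$ lying within a bounded multiple of $\pi_{\theta_0}(x)$. Combining this with $\|\pi_{\theta_0}^\perp\mu\|_\infty\le M$ and a disjointness argument on these forced gaps should yield at each dyadic scale a bound
\begin{equation*}
\int_R \mu(X(x,3J,r)\setminus X(x,G,r))\,d\mu(x)\lesssim \HH(J)r\mu(R),
\end{equation*}
which integrates against $dr/r^2$ over the tree scales to give exactly the $\HH(J)\mu(R)$ budget. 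The delicate point is the scale-by-scale disjointness: one must avoid double-counting forced gaps across scales, and since the target constant must be independent of $\HH(J)$, there is no room for a logarithmic loss in the projection geometry.
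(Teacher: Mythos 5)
Your architecture coincides with the paper's (rectangles of aspect ratio $\HH(J)^{-1}$ made AD-regular via hypothesis (c), conical energies, a corona decomposition stopped on $\E_G$, an interior/exterior splitting of $\E_J$, and a key geometric lemma producing gaps in $\pi_0(E)$), so the comparison reduces to whether your two energy estimates actually close. Both, as written, have the same quantitative defect: a per-scale additive error of order $\HH(J)$ that cannot be summed over the unboundedly many generations of a tree. For the interior part, summing $\mu(X(x,I,r))\lesssim M\HH(I)r$ over the gaps $I$ of $J\setminus G$ yields only $\mu(X(x,0.5J\setminus G,r))\lesssim M\varepsilon\HH(J)\,r$, hence $\E^{int}_J(Q)\lesssim \E_G(Q)+CM\varepsilon\HH(J)$ --- not the pointwise domination $\E^{int}_J(Q)\lesssim\E_G(Q)$ you assert. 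The term $M\varepsilon\HH(J)\mu(Q)$, summed over a tree, grows with the tree depth, and no choice of $\varepsilon$ fixes this. The missing step is a genuine multiplicative absorption: whenever $X(x,0.9J,r)$ meets $E$ at some $y$, AD-regularity places mass $\gtrsim C_0^{-1}\HH(J)|x-y|$ in a ball $B(y,c\HH(J)|x-y|)\subset X(x,J,\cdot)$, of which at most $CM\varepsilon\HH(J)|x-y|$ can sit over the bad directions; taking $\varepsilon\ll (C_0M)^{-1}$ and running a $5r$-covering of $E\cap X(x,0.9J,r)$ gives $\mu(X(x,0.9J,r))\lesssim_{C_0}\mu(X(x,G,2r))$, which is what the paper proves (Lemma \ref{lem:filling gaps}) and what you actually need.

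The exterior accounting has the same problem in a sharper form: your claimed uniform bound $\int_R\mu\bigl(X(x,3J,r)\setminus X(x,G,r)\bigr)\,d\mu(x)\lesssim \HH(J)\,r\,\mu(R)$, integrated against $dr/r^2$ over the tree scales, produces $\HH(J)\mu(R)\cdot\log(r_{\max}/r_{\min})$ --- precisely the logarithmic loss you say there is no room for. The resolution is not a better uniform-in-$r$ estimate but a Carleson packing of the forced gaps: at scale $\ell(Q)$ a bad cube $Q$ must have $\pi_0(\cR_Q)$ contained in a bounded enlargement of a gap $K$ with $\HH(K)\sim_A\ell(Q)$; a fixed gap is therefore charged at only $O_A(1)$ scales, the number of bad cubes of a fixed generation over a given gap is bounded (this needs the bounded overlap of $\{\pi_0(\cR_P)\}$ within a generation, itself a consequence of the empty-cone property from the stopping time), and the total gap length inside $\pi_0(A\cR_R)$ is $\lesssim A\ell(R)$. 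Hence $\sum_{Q\ \mathrm{bad}}\mu(Q)\lesssim_{A,M}\ell(R)\lesssim C_0\mu(R)$ over \emph{all} scales simultaneously, and each bad cube contributes only $\lesssim M\HH(J)\mu(Q)$ to the exterior energy. One further bookkeeping point: the stopping threshold must be $\delta\HH(J)$ (not $\delta$), since a nonempty cone at a tree scale only forces $\E_G\gtrsim\HH(J)$; the packing for $\Top$ then carries a factor $(\delta\HH(J))^{-1}$, which still combines with the per-tree bound $\HH(J)\mu(R)$ to give the stated conclusion.
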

	\begin{remark}
		In the proposition above, the interval $J$ may be open, closed, or half-open, it doesn't make a difference. In the conclusion we may take $3J$ to be a closed interval (in fact, the same proof gives the conclusion also with $CJ$ replacing $3J$, if we let $C_{\mathsf{Prop}}$ depend on $C$ as well, and as long as $\HH(CJ)\le c_1 C_0^{-1} M^{-1}$).
	\end{remark}
	We prove \propref{prop:main prop} in Sections \ref{sec:rectangles}--\ref{sec:keygeometric}.
	Now let us show how it can be used to prove \thmref{thm:main thm}. We begin by proving a corollary of \propref{prop:main prop}, which looks quite similar to \propref{prop:main prop} itself; the crucial difference is that it deals with sets $G\subset J$ with $\HH(G)<(1-\varepsilon)\HH(J)$. Recall that for a dyadic interval $I\in \Delta$ we denote by $I^1$ the dyadic parent of $I$.
	\begin{cor}\label{cor:main prop}
		Let $1\le C_0,M<\infty$. Let $\varepsilon=\varepsilon(M,C_0)$, $C_{\mathsf{Prop}}=C_{\mathsf{Prop}}(M,C_0)$ be as in \propref{prop:main prop}. 		
		Assume that:
		\begin{enumerate}[label=({\alph*})]
			\item $E\subset \R^2$ is a bounded AD-regular set with constant $C_0$, and $\mu = \HH|_{E}$,
			\item $J\subset \TT$ is a dyadic interval with $\HH(J)\le c_1C_0^{-1} M^{-1}$, where $c_1>0$ is as in \propref{prop:main prop},
			\item $G\subset \overline{J}$ is a finite union of closed dyadic intervals, which satisfies
			\begin{equation}\label{eq:Gassumption}
				0<\HH(G)< (1-\varepsilon)\HH(J),
			\end{equation}
			\item denoting the collection of maximal dyadic intervals contained in $J\setminus G$ by $\cB_{\Delta}$, for every $I\in\cB_\Delta$ there exists $\theta_I\in I^{1}$ such that $\|\pi_{\theta_I}^\perp\mu\|_\infty \le M$.
		\end{enumerate}
		Then, there exists a closed set $G_*$ with
		\begin{equation}\label{eq:GGstar}
		G\subset G_*\subset \overline{J},
		\end{equation}
		which is a finite union of closed dyadic intervals, such that
		\begin{equation}\label{eq:measu}
		\HH(G_*)\ge (1+{\varepsilon})\HH(G),
		\end{equation}
		and
		\begin{multline}\label{eq:ener}
		\int_E\int_0^{\diam(E)} \frac{\mu(X(x,G_*,r))}{r}\, \frac{dr}{r}d\mu(x)\\
		\le C_{\mathsf{Prop}}\bigg(\int_{E}\int_0^{\diam(E)} \frac{\mu(X(x,G,r))}{r}\, \frac{dr}{r}d\mu(x) + \HH(J)\mu(E)\bigg).
		\end{multline}
		Moreover, denoting by $\cB_{\Delta,*}$ the collection of maximal dyadic intervals contained in $J\setminus G_*$, we have
		\begin{equation}\label{eq:dyda}
		\cB_{\Delta,*}\subset \cB_{\Delta}.
		\end{equation}
	\end{cor}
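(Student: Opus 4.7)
The proof is based on applying Proposition \ref{prop:main prop} to a whole family of dyadic subintervals of $J$ on which $G$ is already $(1-\varepsilon)$-dense, and assembling the results. Let $\mathcal{J}$ denote the collection of dyadic intervals $J^*\subseteq J$ that are maximal with respect to $\HH(G\cap J^*)\geq(1-\varepsilon)\HH(J^*)$. Since every closed dyadic subinterval of $G$ has density one, the members of $\mathcal{J}$ together cover $G$ up to a null set; they are pairwise disjoint by maximality; and by \eqref{eq:Gassumption}, $J\notin\mathcal{J}$, so each $J^*$ has a dyadic parent $(J^*)^1\subseteq J$ of $G$-density strictly less than $1-\varepsilon$. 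This forces the sibling $\widetilde{J^*}$ of $J^*$ to satisfy $\HH(\widetilde{J^*}\setminus G)>\varepsilon\HH(J^*)$, and the inclusion $\widetilde{J^*}\subseteq(J^*)^1\subseteq 3J^*$ will be crucial below. I then set
\[
G_* \coloneqq G \cup \bigcup\{I\in\mathcal{B}_\Delta : I\subseteq 3J^*\cap\overline{J}\text{ for some }J^*\in\mathcal{J}\}.
\]
This is visibly a finite union of closed dyadic intervals with $G\subseteq G_*\subseteq\overline{J}$, and since it is formed by adjoining entire elements of $\mathcal{B}_\Delta$, a short dyadic-maximality argument gives $\mathcal{B}_{\Delta,*}\subseteq\mathcal{B}_\Delta$, proving \eqref{eq:dyda}.

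For the energy bound \eqref{eq:ener}, I apply Proposition \ref{prop:main prop} separately to each $J^*\in\mathcal{J}$ with good set $G'\coloneqq G\cap J^*$. Hypothesis (b) is inherited from $\HH(J)\leq c_1C_0^{-1}M^{-1}$; hypothesis (c) is supplied by $\theta_0=\theta_I$ for some $I\in\mathcal{B}_\Delta$ contained in $J^*$ (or in $\widetilde{J^*}\subseteq 3J^*$ when $J^*\subseteq G$), using that $I^1\subseteq 3J^*$ in both cases; hypothesis (d) follows from hypothesis (d) of the corollary, since every component $C$ of $J^*\setminus G$ contains some $I\in\mathcal{B}_\Delta$ with $I^1\subseteq 3C$ by dyadic nesting. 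The key observation is that the direction sets $G\cap J^*$ are pairwise disjoint, so the cones $X(x,G\cap J^*)$ meet only at the apex, giving
\[
\sum_{J^*\in\mathcal{J}}\int_E\int_0^{\diam(E)}\frac{\mu(X(x,G\cap J^*,r))}{r}\,\frac{dr}{r}\,d\mu(x) = \int_E\int_0^{\diam(E)}\frac{\mu(X(x,G,r))}{r}\,\frac{dr}{r}\,d\mu(x).
\]
Summing the conclusions of Proposition \ref{prop:main prop} over $J^*\in\mathcal{J}$ and using $\sum\HH(J^*)\leq\HH(J)$ then yields \eqref{eq:ener} via subadditivity of cones and the inclusion $G_*\setminus G\subseteq\bigcup 3J^*$, after absorbing an absolute constant into $C_{\mathsf{Prop}}$.

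The measure-growth bound \eqref{eq:measu} is the most delicate part. Each $\mathcal{B}_\Delta$-interval covering $\widetilde{J^*}\setminus G$ lies inside $\widetilde{J^*}\subseteq 3J^*$ and is therefore captured by $\mathcal{B}_{\Delta,+}$, so combining $\HH(\widetilde{J^*}\setminus G)>\varepsilon\HH(J^*)$ with $\sum_{J^*\in\mathcal{J}}\HH(J^*)\geq\HH(G)$ heuristically delivers $\varepsilon\HH(G)$ of new measure. The main obstacle I foresee is that distinct siblings $\widetilde{J^*}$ may nest (rather than being pairwise disjoint), so this direct sum overcounts $\HH(\bigcup(\widetilde{J^*}\setminus G))$. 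Resolving this will likely require either a Vitali-type extraction of a disjoint subfamily of $\mathcal{J}$ that still covers a fixed fraction of $\HH(G)$, or a bounded-multiplicity bound on the dyadic nesting of siblings, with the resulting small loss absorbed into a slightly smaller absolute constant in the definition of $\varepsilon=\varepsilon(C_0,M)$.
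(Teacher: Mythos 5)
Your setup --- the family $\mathcal{J}$ of maximal dyadic intervals of $G$-density at least $1-\varepsilon$, the application of \propref{prop:main prop} to each member with good set $G\cap J^*$, and the verification of its hypotheses via the directions $\theta_I$, $I\in\cB_\Delta$ --- coincides with the paper's, and your energy estimate \eqref{eq:ener} goes through. But the proof is not complete: you have not proved \eqref{eq:measu}, and you say so yourself. The obstacle you identify (nesting of the siblings $\widetilde{J^*}$) is real --- e.g.\ the intervals $J^*_k=[1-2^{-k+1},1-2^{-k})$ can all belong to $\mathcal{J}$, and their siblings $[1-2^{-k},1)$ form a decreasing chain --- and neither of your proposed fixes is carried out. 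The correct resolution, which is what the paper does, is to work with the \emph{parents} rather than the siblings: let $\mathcal{I}_*$ be the family of maximal elements of $\{(J^*)^1: J^*\in\mathcal{J}\}$. These are pairwise disjoint, their closures cover $G$, and each $P\in\mathcal{I}_*$ is the parent of some maximal $J^*\in\mathcal{J}$ and therefore fails the density condition, so $\HH(P\setminus G)\ge\varepsilon\HH(P)$. Defining $G_*\coloneqq\bigcup_{P\in\mathcal{I}_*}\overline{P}$ then gives $\HH(G_*)=\sum_P\HH(P)=\HH(G)+\sum_P\HH(P\setminus G)\ge\HH(G)+\varepsilon\HH(G_*)$, i.e.\ \eqref{eq:measu} with the full $\varepsilon$ and no Vitali-type loss. (Even for your $G_*$ the same disjoint family yields the bound, since $P\setminus G\subset G_*$ up to a null set for each $P\in\mathcal{I}_*$; the point is that the bookkeeping must run over the disjoint maximal parents, not over the possibly deeply nested siblings.)

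A second, structural problem concerns your definition of $G_*$ itself: the corollary is applied iteratively, so $G_*$ must again be a \emph{finite} union of closed dyadic intervals, and yours generally is not. Since $G$ is a finite union of closed dyadic intervals, $J\setminus G$ has components which are open at endpoints of components of $G$, and the maximal dyadic intervals filling such a component accumulate at that endpoint; infinitely many of them lie inside $3J^*$ for a $J^*\in\mathcal{J}$ abutting the endpoint, so your $G_*$ is an infinite union of dyadic intervals and need not even be closed. Taking $G_*=\bigcup_{P\in\mathcal{I}_*}\overline{P}$ avoids this too, as $\mathcal{I}_*$ is finite. Your treatment of \eqref{eq:dyda}, of \eqref{eq:ener}, and of the hypotheses of \propref{prop:main prop} would need only cosmetic changes under this modification (the paper applies the proposition to a child $J_P\in\mathcal{J}$ of each $P\in\mathcal{I}_*$ and uses $P\subset 3J_P$, rather than to every member of $\mathcal{J}$, but both routes work for the energy bound).
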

	The statement above is quite involved, but it is very well-suited for its iterative application later on: note that the resulting set $G_*$ satisfies all the same assumptions as the set $G$ we started with, except perhaps for the measure assumption \eqref{eq:Gassumption}. 
	
	We divide the proof  of Corollary \ref{cor:main prop} into several steps.
	\subsubsection*{Definition of $G_*$}
		Let $\mathcal{I}\subset\Delta(J)$ be the family of maximal dyadic intervals such that for every $I\in \mathcal{I}$
		\begin{equation}\label{eq:Idef}
		\HH(I\cap G)\ge (1-\varepsilon)\HH(I).
		\end{equation}
		Since $G$ is a finite union of closed dyadic intervals, we get immediately that
		\begin{equation*}
		G\subset \bigcup_{I\in\mathcal{I}}\overline{I},
		\end{equation*}
		and that $\mathcal{I}$ is a finite family. Observe that the intervals in $\mathcal{I}$ are pairwise disjoint by maximality. Moreover, we have $J\notin\mathcal{I}$ due to \eqref{eq:Gassumption}, so that all $I\in\mathcal{I}$ are strictly contained in ${J}$.
		
		Consider the family $\mathcal{I}^1=\{I^1\}_{I\in\mathcal{I}}\subset\Delta(J)$, where $I^1$ denotes the dyadic parent of $I$, and let $\mathcal{I}_*$ be the family of maximal dyadic intervals from $\mathcal{I}^1$. The intervals in $\mathcal{I}_*$ are pairwise disjoint by maximality, and the family $\mathcal{I}_*$ is finite because $\mathcal{I}$ is finite.
		We set
		\begin{equation*}
		G_{*} \coloneqq \bigcup_{I\in\mathcal{I}_*}\overline{I}.
		\end{equation*}
		It remains to show that $G_*$ satisfies \eqref{eq:GGstar}, \eqref{eq:measu}, \eqref{eq:ener}, and \eqref{eq:dyda}.
	\begin{proof}[Proof of \eqref{eq:GGstar}]
		Note that
		\begin{equation*}
		G\subset \bigcup_{I\in\mathcal{I}}\overline{I}\subset \bigcup_{I\in\mathcal{I}}\overline{I^1}=\bigcup_{I\in\mathcal{I}_*}\overline{I} = G_*.
		\end{equation*}
		Since $\mathcal{I}_*\subset\Delta(J)$, we also have $G_*\subset \overline{J}$.
	\end{proof}	
	\begin{proof}[Proof of \eqref{eq:measu}]
		Recall that $\mathcal{I}$ was defined as the collection of maximal dyadic intervals where \eqref{eq:Idef} holds. Let $I\in\mathcal{I}_*$. We know that $I$ is a parent of some $I'\in\mathcal{I}$, and $I'$ is a maximal interval where \eqref{eq:Idef} holds. It follows that $I$ does not satisfy \eqref{eq:Idef}, which means that
		\begin{equation*}
		\HH(I\cap G)<(1-\varepsilon)\HH(I),
		\end{equation*}
		or equivalently,
		\begin{equation*}
		\HH(I\setminus G)\ge \varepsilon\HH(I).
		\end{equation*}
		Using this estimate we compute
		\begin{multline*}
		\HH(G_*) = \sum_{I\in\mathcal{I}_*}\HH(I) = \sum_{I\in\mathcal{I}_*}\HH(I\cap G) + \sum_{I\in\mathcal{I}_*}\HH(I\setminus G)\\
		= \HH(G) + \sum_{I\in\mathcal{I}_*}\HH(I\setminus G) \ge \HH(G) + \varepsilon\sum_{I\in\mathcal{I}_*}\HH(I)\\
		= \HH(G)+\varepsilon\HH(G_*)\ge (1+\varepsilon)\HH(G).
		\end{multline*}		
		This shows \eqref{eq:measu}.
		\end{proof}
	\begin{proof}[Proof of \eqref{eq:ener}]
		Without loss of generality, we may assume that $\diam(E)=1$. Fix $I\in\mathcal{I_*}$, and let $J_I$ be a child of $I$ satisfying $J_I\in\mathcal{I}$. We claim that we may apply \propref{prop:main prop} with $J=J_I$ and $G=G\cap J_I$. Indeed, assumption (a) is the same as in Corollary \ref{cor:main prop}, and:
		\begin{itemize}
			\item assumption (b) holds since $\HH(J_I)\le \HH(J)\le c_1C_0^{-1} M^{-1}$.
			\item assumption (c) holds because $(J_I)^1=I$ has non-empty intersection with both $G$ and $J\setminus G$, so in particular $I$ strictly contains some $K\in \cB_{\Delta}$. We assumed that there exists $\theta_K\in K^{1}\subset I$ such that $\|\pi_{\theta_K}^\perp\mu\|_\infty \le M$. Since $I\subset 3J_I$, we may take $\theta_0=\theta_K$.
			\item assumption (d) follows from the definition of $\mathcal{I}$ \eqref{eq:Idef}.
			\item assumption (e) holds because any interval $K$ comprising $J_I\setminus G$ contains some dyadic interval $K'\in\cB_{\Delta}$, and since $(K')^1\subset 3K$, we may take $\theta_K \coloneqq \theta_{K'}$.
		\end{itemize}
		We checked all the assumptions of \propref{prop:main prop}, and so we may conclude that
		\begin{multline*}
		\int_{E}\int_0^1 \frac{\mu(X(x,3J_I,r))}{r}\, \frac{dr}{r}d\mu(x)\\
		\le C_{\mathsf{Prop}}\int_{E}\int_0^1 \frac{\mu(X(x,G\cap J_I,r))}{r}\, \frac{dr}{r}d\mu(x) + C_{\mathsf{Prop}}\HH(J_I)\mu(E).
		\end{multline*}
		Summing over $I\in\mathcal{I}_*$ yields
		\begin{align*}
		\int_{E}\int_0^1& \frac{\mu(X(x,G_*,r))}{r}\, \frac{dr}{r}d\HH(x)\\
		&=\sum_{I\in\mathcal{I_*}}\int_{E}\int_0^1 \frac{\mu(X(x,I,r))}{r}\, \frac{dr}{r}d\mu(x)\\
		&\le \sum_{I\in\mathcal{I_*}}\int_{E}\int_0^1 \frac{\mu(X(x,3J_I,r))}{r}\, \frac{dr}{r}d\mu(x)\\
		&\le \sum_{I\in\mathcal{I_*}}C_{\mathsf{Prop}}\int_{E}\int_0^1 \frac{\mu(X(x,G\cap J_I,r))}{r}\, \frac{dr}{r}d\mu(x)+ \sum_{I\in\mathcal{I_*}}C_{\mathsf{Prop}}\HH(J_I)\mu(E)\\
		&\le C_{\mathsf{Prop}}\int_{E}\int_0^1 \frac{\mu(X(x,G,r))}{r}\, \frac{dr}{r}d\mu(x)+ C_{\mathsf{Prop}}\HH(J)\mu(E).
		\end{align*}
		This shows \eqref{eq:ener}.
		\end{proof}
		\begin{proof}[Proof of \eqref{eq:dyda}]
		Let $I\in\cB_{\Delta,*}$, so that
		\begin{equation}\label{eq:max}
		I\cap G_*=\varnothing\quad \text{and}\quad I^1\cap G_*\neq\varnothing.
		\end{equation}
		We want to prove that $I\in\cB_{\Delta}$. Since $G\subset G_*$, it is clear that $I\cap G=\varnothing,$ so we only need to show that
		\begin{equation}\label{eq:goal3}
		I^1\cap G\neq\varnothing.
		\end{equation} 
		
		Let $I'$ be the dyadic sibling of $I$, that is, the unique interval $I'\in\Delta(J)$ such that $I\cup I' = I^1$. It follows from \eqref{eq:max} that $I'\cap G_*\neq \varnothing$.		
		By the definition of $G_*$, there exists $P\in\mathcal{I}_*$ such that $P\cap I'\neq\varnothing$. Hence, we have either $P \subset I'$ or $I'\subsetneq P$. The latter would imply $I^1\subset P$, which is not possible because $I\cap P\subset I\cap G_{*}=\varnothing$. Thus, we have $P \subset I'$.
		
		Let $J_P\in\mathcal{I}$ be such that $P=(J_P)^1$. By the definition of $\mathcal{I}$ \eqref{eq:Idef} we have
		\begin{equation*}
		\HH(J_P\cap G)\ge (1-\varepsilon)\HH(J_P).
		\end{equation*}
		Since $J_P\subset P\subset I'$, it follows that $I'\cap G\neq\varnothing$. In particular the parent $(I')^1=I^1$ satisfies $I^1\cap G\neq\varnothing$. This gives \eqref{eq:goal3}, and concludes the proof of \eqref{eq:dyda}.
	\end{proof}
	This finishes the proof of Corollary \ref{cor:main prop}.
	\subsection{Proof of \thmref{thm:main thm}}
	\subsubsection*{Preliminaries}	
	Recall that $G^\perp = G+1/4$. Let $J_0\subset \TT$ be a dyadic interval with 
	\begin{equation*}
		2^{-1}c_1C_0^{-1} M^{-1}\le \HH(J_0)\le c_1 C_0^{-1} M^{-1}
	\end{equation*}
	and such that
	\begin{equation*}
		\HH(J_0\cap G^\perp)\ge s\HH(J_0).
	\end{equation*}
	It is clear that such interval exists since $\HH(G^\perp) = \HH(G)\ge s$. Using inner regularity of Lebesgue measure, we may find a closed subset $G'\subset G^\perp\cap J_0$ such that
	\begin{equation*}
		\HH(G')\ge \frac{1}{2}\HH(G^\perp\cap J_0)\ge \frac{s}{2}\HH(J_0).
	\end{equation*}

	Let $\varepsilon=\varepsilon(C_0,M)$ be as in \propref{prop:main prop}. We define $\cG\subset\Delta(J_0)$ as the family of maximal dyadic intervals such that for every $I\in \cG$
	\begin{equation*}
		\HH(I\cap G')\ge (1-\varepsilon)\HH(I).
	\end{equation*}
	It follows from Lebesgue differentiation theorem that
	\begin{equation*}
		\HH\bigg(G'\setminus \bigcup_{I\in\cG} I\bigg) = 0.
	\end{equation*}
	In particular,
	\begin{equation*}
		\HH\bigg(\bigcup_{I\in\cG} I\bigg)\ge \HH(G')\ge\frac{s}{2}\HH(J_0).
	\end{equation*}
	Let $\cG_0\subset\cG$ be a finite sub-collection such that
	\begin{equation}\label{eq:meas}
			\HH\bigg(\bigcup_{I\in\cG_0} I\bigg) \ge \frac{1}{2}\HH\bigg(\bigcup_{I\in\cG} I\bigg)\ge \frac{s}{4}\HH(J_0).
	\end{equation}
	Set
	\begin{equation*}
		G_0 = \bigcup_{I\in\cG_0} \overline{I},
	\end{equation*}
	so that $G_0$ is a finite union of closed dyadic intervals.

	Without loss of generality, we may assume that $\diam(E)=1$. For each $I\in\cG_0$ we apply \propref{prop:main prop} (with $J=I$ and $G=G'\cap I$; it is straightforward to see that all the assumptions are satisfied) to conclude that
	\begin{multline}\label{eq:6}
		\int_{E}\int_0^1 \frac{\mu(X(x,\overline{I},r))}{r}\, \frac{dr}{r}d\mu(x)\\
		\le C_{\mathsf{Prop}}\int_{E}\int_0^1 \frac{\mu(X(x,G'\cap I,r))}{r}\, \frac{dr}{r}d\mu(x) + C_{\mathsf{Prop}}\HH(I)\mu(E).
	\end{multline}
	Summing \eqref{eq:6} over $I\in\cG_0$ we get
	\begin{multline}\label{eq:7}
		\int_{E}\int_0^1 \frac{\mu(X(x,G_0,r))}{r}\, \frac{dr}{r}d\mu(x)\\
		\le C_{\mathsf{Prop}}\int_{E}\int_0^1 \frac{\mu(X(x,G',r))}{r}\, \frac{dr}{r}d\mu(x) + C_{\mathsf{Prop}}\HH(G_0)\mu(E).
	\end{multline}
	Notice also that if $\cB_{\Delta,0}$ are maximal dyadic intervals contained in $J_0\setminus G_0$, and $I\in \cB_{\Delta,0}$, then $I^1$ contains some interval from $\cG_0$, and in particular $I^1\cap G'\neq\varnothing$. Since $G'\subset G^\perp$, we get from \eqref{eq:projbdd} that there exists $\theta_I\in I^1$ such that $\|\pi_{\theta_I}\mu\|\le M$. Hence, $G_0$ satisfies all the assumptions of Corollary \ref{cor:main prop}, except perhaps for the measure assumption \eqref{eq:measu}.
	
	\subsubsection*{Iteration} We are in position to start the iteration. Assume for a moment that $\HH(G_0)<(1-\varepsilon)\HH(J_0)$ so that $G_0$ satisfies all the assumptions of Corollary \ref{cor:main prop}. We apply Corollary \ref{cor:main prop}, and we define $G_1\coloneqq (G_0)_*$, so that
	\begin{equation*}
		\HH(G_1)\ge (1+\varepsilon)\HH(G_0) \ge\frac{s(1+\varepsilon)}{4}\HH(J_0),
	\end{equation*}
	and all the other conclusions of Corollary \ref{cor:main prop} hold for $G_1$. If $\HH(G_1)<(1-\varepsilon)\HH(J_0)$, then we may apply Corollary \ref{cor:main prop} yet again to get a set $G_2\coloneqq (G_1)_*$. 
	
	In general, if after $k$-applications of Corollary \ref{cor:main prop} we get a set $G_{k}\coloneqq (G_{k-1})_*$ satisfying $\HH(G_{k})<(1-\varepsilon)\HH(J_0)$, then we may continue applying Corollary \ref{cor:main prop}. If for some $k=k_0$ we get $\HH(G_{k_0})\ge(1-\varepsilon)\HH(J_0)$, then we may apply \propref{prop:main prop} instead (with $G=G_{k_0}, J=J_0$), so that
	\begin{multline*}
	\int_{E}\int_0^1 \frac{\mu(X(x,3J_0,r))}{r}\, \frac{dr}{r}d\mu(x)\\
	\le C_{\mathsf{Prop}}\int_{E}\int_0^1 \frac{\mu(X(x,G_{k_0},r))}{r}\, \frac{dr}{r}d\mu(x) + C_{\mathsf{Prop}}\HH(J_0)\mu(E).
	\end{multline*}
	Recall that for each $k$ we had $G_{k+1}=(G_k)_*$, so that by \eqref{eq:ener}
	\begin{multline*}
		\int_{E}\int_0^1 \frac{\mu(X(x,G_{k+1},r))}{r}\, \frac{dr}{r}d\mu(x)\\
		\le C_{\mathsf{Prop}}\int_{E}\int_0^1 \frac{\mu(X(x,G_{k},r))}{r}\, \frac{dr}{r}d\mu(x) + C_{\mathsf{Prop}}\HH(J_0)\mu(E).
	\end{multline*}
	Putting the two estimates above together (the second one used $k_0$ times), and also recalling \eqref{eq:7}, we get
	\begin{multline}\label{eq:8}
		\int_{E}\int_0^1 \frac{\mu(X(x,3J_0,r))}{r}\, \frac{dr}{r}d\mu(x)\\
		\le C_{\mathsf{Prop}}^{k_0+1}\int_{E}\int_0^1 \frac{\mu(X(x,G_{0},r))}{r}\, \frac{dr}{r}d\mu(x) + (k_0+1)C_{\mathsf{Prop}}^{k_0+1}\HH(J_0)\mu(E)\\ 
		\le C_{\mathsf{Prop}}^{k_0+2}\int_{E}\int_0^1 \frac{\mu(X(x,G',r))}{r}\, \frac{dr}{r}d\mu(x) + (k_0+2)C_{\mathsf{Prop}}^{k_0+2}\HH(J_0)\mu(E).
	\end{multline}
	
	\subsubsection*{Bounding the number of iterations} We claim that the iteration ends (i.e. we obtain a set $G_{k_0}$ with $\HH(G_{k_0})\ge(1-\varepsilon)\HH(J_0)$) after at most
	\begin{equation}\label{eq:9}
		k_0\lesssim_{s,\varepsilon} 1
	\end{equation}
 	steps.
	Indeed, we had
	\begin{equation*}
		\HH(G_0) = \HH\bigg(\bigcup_{I\in\cG_0}I\bigg) \overset{\eqref{eq:meas}}{\ge}\frac{s}{4}\HH(J_0),
	\end{equation*}
	and so by \eqref{eq:measu} for each $G_k$ we have a lower bound
		\begin{equation*}
		\HH(G_k) \ge (1+\varepsilon)\HH(G_{k-1})\ge (1+\varepsilon)^k\HH(G_0)\ge \frac{s(1+\varepsilon)^k}{4}\HH(J_0).
	\end{equation*}
	Taking $k_0=k_0(s,\varepsilon)$ so large that $s(1+\varepsilon)^{k_0}/4\ge (1-\varepsilon)$, we see that the iterative procedure described above ends after at most $k_0$ applications of Corollary \ref{cor:main prop}.
	
	\subsubsection*{End of the proof}
	Taking into account estimates \eqref{eq:8} and \eqref{eq:9}, the fact that $\varepsilon=\varepsilon(M,C_0),$ $C_{\mathsf{Prop}}=C_{\mathsf{Prop}}(M, C_0),$ $\HH(J)\le 1$, and that $G'\subset G^\perp$, we get
	\begin{multline*}
		\int_{E}\int_0^1 \frac{\mu(X(x,3J_0,r))}{r}\, \frac{dr}{r}d\mu(x)\\
		\le C(M,C_0,s)\int_{E}\int_0^1 \frac{\mu(X(x,G^\perp,r))}{r}\, \frac{dr}{r}d\mu(x) + C(M,C_0,s)\mu(E).
	\end{multline*}
	Hence, by Corollary \ref{cor:changtolsa}
	\begin{equation*}
	\int_{E}\int_0^1 \frac{\mu(X(x,3J_0,r))}{r}\, \frac{dr}{r}d\mu(x)\lesssim_{M,C_0,s} \mu(E).
	\end{equation*}
	Let $M_0=M_0(M,C_0,s)$ be a big constant. We define
	\begin{equation*}
	E_*\coloneqq \big\{x\in E\ :\ \int_0^1 \frac{\mu(X(x,3J_0,r))}{r}\, \frac{dr}{r}\le M_0\big\}.
	\end{equation*}
	By Chebyshev's inequality, if $M_0$ is chosen big enough, we have
	\begin{equation*}
	\mu(E_*)\ge\frac{\mu(E)}{2}.
	\end{equation*}
	Applying \propref{prop:conical energy} to $E_*$ and $3J_0$, and recalling that $\HH(J_0)\sim C_0^{-1}M^{-1}$, we obtain a Lipschitz graph $\Gamma$ with $\lip(\Gamma)\lesssim_{M,C_0} 1$ and 
	\begin{equation*}
	\HH(\Gamma\cap E_*)\gtrsim_{C_0,M,M_0} \mu(E).
	\end{equation*}
	This finishes the proof of \thmref{thm:main thm}. 
	\begin{flushright}
		$\square$
	\end{flushright}

	The remainder of the paper is dedicated to the proof of \propref{prop:main prop}.
	
	\section{Rectangles and generalized cubes}\label{sec:rectangles}
	
	Suppose that $E\subset \R^2$ is a bounded AD-regular set with constant $C_0$, and set $\mu = \HH|_{E}.$ Since \propref{prop:main prop} is scale-invariant, we may assume without loss of generality that $\diam(E)=1$.
	
	Let $J,G\subset\TT$ be as in \propref{prop:main prop}. By rotating $E$, we may assume that $J$ is centered at $1/4$, so that the cone $X(0,J)$ is centered on the vertical axis. Note that that $\pi_0=\pi_{1/4}^\perp$ is the projection to the horizontal axis, i.e., $\pi_{0}(x,y)=x$. Recall that there exists $\theta_0\in 3J$ such that
	\begin{equation}\label{eq:bddprojtheta0}
		\|\pi_{\theta_0}^\perp\mu\|_\infty \le M.
	\end{equation}	
	
	\subsection{Rectangles}
	Throughout the article we will be working with many rectangles, typically with one side much longer than the other. Let us fix some notation.
	
	Given a rectangle $\cR\subset\R^2$, we will denote the length of its shorter side by $\ell(\cR)$, and the length of its longer side by $\ELL(\cR)$. We will also write $\theta(\cR)\in[0,1/2)\subset\TT$ to denote the ``direction'' of $\cR$, so that $\ell_{\theta(\cR)}=\spn((\cos(2\pi\theta(\cR)),\sin(2\pi\theta(\cR))))$ is parallel to the longer sides of $\cR$ (for squares, it doesn't matter which of the two directions we choose).
	
	Given a constant $C>0$ and a rectangle $\cR$, we will sometimes write $C\cR$ to denote the (unique) rectangle with the same center as $\cR$, $\ell(C\cR)=C\ell(\cR),\, \ELL(C\cR)=C\ELL(\cR)$, and such that their longer sides are parallel to each other.
	
	Most of the rectangles $\cR$ we will be working with will have a fixed direction $\theta(\cR) = 1/4$, and a fixed aspect ratio $\ELL(\cR)/\ell(\cR)=\HH(J)^{-1}$. In other words, they will be very tall, vertically aligned rectangles. We fix notation specific to these rectangles.
	
	Given $x\in\R^2$ and $r>0$ we set
	\begin{equation*}
	\cR(x,r) = x + \bigg[-\frac{r}{2},\, \frac{r}{2}\bigg]\times \bigg[-\frac{r}{2\HH(J)},\, \frac{r}{2\HH(J)}\bigg],
	\end{equation*}
	so that $\ell(\cR(x,r))=r$ and $\ELL(\cR(x,r))=\HH(J)^{-1}r$.
	Note that $\pi_0(\cR(x,r)) = \pi_0(x)+[-r/2, r/2]$. 
	
	\begin{lemma}\label{lem:ADRrectangles}	
		Let $\cR$ be a rectangle, and suppose that for some $\theta\in\TT$ with
		\begin{equation}\label{eq:theta close}
		|\theta - \theta(\cR)|\lesssim \frac{\ell(\cR)}{\ELL(\cR)}
		\end{equation}
		we have $\|\pi_\theta^\perp \mu\|_{L^\infty}\le M$. Then,
		\begin{equation}\label{eq:ADRrectangles2}
			\mu(\cR)\lesssim M\ell(\cR).
		\end{equation}
	\end{lemma}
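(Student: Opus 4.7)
The plan is to estimate $\mu(\cR)$ by comparing it to the total mass assigned to the projection $\pi_{\theta}^{\perp}(\cR)$ under the one-dimensional measure $\pi_{\theta}^{\perp}\mu$, and then to use the density bound $M$ combined with the fact that $\pi_{\theta}^{\perp}(\cR)$ is a short interval.

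First I would note the trivial identity
\begin{equation*}
\mu(\cR) \le \mu\bigl((\pi_{\theta}^{\perp})^{-1}(\pi_{\theta}^{\perp}(\cR))\bigr) = \pi_{\theta}^{\perp}\mu\bigl(\pi_{\theta}^{\perp}(\cR)\bigr),
\end{equation*}
so that the hypothesis $\|\pi_{\theta}^{\perp}\mu\|_{L^\infty}\le M$ gives
\begin{equation*}
\mu(\cR) \le M\cdot \HH\bigl(\pi_{\theta}^{\perp}(\cR)\bigr).
\end{equation*}

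Next, I would estimate the length of the projection $\pi_{\theta}^{\perp}(\cR)$. Writing $\alpha = \theta - \theta(\cR)$, the rectangle $\cR$ has sides of length $\ell(\cR)$ in direction $e_{\theta(\cR)+1/4}$ and of length $\ELL(\cR)$ in direction $e_{\theta(\cR)}$. Projecting $\cR$ onto the line $\ell_{\theta+1/4}$ amounts to taking the sum of the absolute projections of these two edge vectors onto $e_{\theta+1/4}$, giving
\begin{equation*}
\HH\bigl(\pi_{\theta}^{\perp}(\cR)\bigr) = \ell(\cR)\,|\cos(2\pi\alpha)| + \ELL(\cR)\,|\sin(2\pi\alpha)| \le \ell(\cR) + 2\pi\, \ELL(\cR)\,|\alpha|.
\end{equation*}
The hypothesis $|\alpha|\lesssim \ell(\cR)/\ELL(\cR)$ then bounds the second term by a constant multiple of $\ell(\cR)$, so $\HH(\pi_\theta^\perp(\cR))\lesssim \ell(\cR)$.

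Combining the two estimates yields $\mu(\cR)\lesssim M\ell(\cR)$, which is \eqref{eq:ADRrectangles2}. There is no real obstacle here; the only subtlety is keeping the trigonometric accounting straight and ensuring that the tilt of $\theta$ relative to the rectangle's axis is small enough that the long side $\ELL(\cR)$ contributes at most $O(\ell(\cR))$ to the projection, which is precisely what assumption \eqref{eq:theta close} guarantees.
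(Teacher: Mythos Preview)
Your proof is correct and follows essentially the same approach as the paper: bound $\mu(\cR)$ by $M\cdot\HH(\pi_\theta^\perp(\cR))$ via the pushforward, then compute the projection length of the rectangle as $\ell(\cR)|\cos(2\pi\alpha)|+\ELL(\cR)|\sin(2\pi\alpha)|$ and use the angle hypothesis to control the second term. The only cosmetic difference is the order in which you present the two steps.
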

	\begin{proof}
		Let $\cR$ and $\theta$ be as above, and set $\alpha = |\theta - \theta(\cR)|\cdot2\pi$. It follows from elementary trigonometry that
		\begin{equation*}
		\HH(\pi_\theta^\perp(\cR)) = \ell(\cR)\bigg(\cos(\alpha)+\frac{\ELL(\cR)}{\ell(\cR)}\sin(\alpha)\bigg).
		\end{equation*}
		From \eqref{eq:theta close} we have $\alpha\lesssim \frac{\ell(\cR)}{\ELL(\cR)}$, and so
		\begin{equation*}
		\HH(\pi_\theta^\perp(\cR))\lesssim \ell(\cR).
		\end{equation*}
		Since $\|\pi_\theta^\perp \mu\|_{L^\infty}\le M$, we get
		\begin{equation*}
		\mu(\cR)\le\mu((\pi_{\theta}^\perp)^{-1}(\pi_\theta^\perp(\cR)))\le M \HH(\pi_\theta^\perp(\cR))\lesssim M\ell(\cR).
		\end{equation*}
	\end{proof}
	\begin{cor}
		For any $x\in\R^2$ and $r>0$ we have
		\begin{equation}\label{eq:ADRrectangles}
		\mu(\cR(x,r))\lesssim Mr.
		\end{equation}
	\end{cor}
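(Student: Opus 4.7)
The plan is to deduce this corollary as a direct application of Lemma \ref{lem:ADRrectangles} to the specific rectangle $\cR(x,r)$, using the direction $\theta_0 \in 3J$ supplied by assumption (c) of \propref{prop:main prop}.

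First I would record the two relevant parameters of $\cR(x,r)$: by definition $\theta(\cR(x,r)) = 1/4$, $\ell(\cR(x,r)) = r$, and $\ELL(\cR(x,r)) = \HH(J)^{-1}r$, so the reciprocal aspect ratio is $\ell(\cR(x,r))/\ELL(\cR(x,r)) = \HH(J)$. Next, since $J$ was rotated to be centered at $1/4$, the interval $3J$ is also centered at $1/4$ and has length $3\HH(J)$; hence the hypothesis $\theta_0 \in 3J$ gives
\begin{equation*}
|\theta_0 - \theta(\cR(x,r))| = |\theta_0 - 1/4| \le \tfrac{3}{2}\HH(J) = \tfrac{3}{2}\,\frac{\ell(\cR(x,r))}{\ELL(\cR(x,r))},
\end{equation*}
which is exactly the condition \eqref{eq:theta close} required to apply the previous lemma.

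Finally, combining $|\theta_0 - \theta(\cR(x,r))| \lesssim \ell(\cR(x,r))/\ELL(\cR(x,r))$ with the bound $\|\pi_{\theta_0}^\perp\mu\|_\infty \le M$ from \eqref{eq:bddprojtheta0}, \lemref{lem:ADRrectangles} yields
\begin{equation*}
\mu(\cR(x,r)) \lesssim M\ell(\cR(x,r)) = Mr,
\end{equation*}
which is exactly \eqref{eq:ADRrectangles}. There is really no obstacle here: the corollary is a one-line specialization of the lemma, and the only thing that needs verification is that the ``good'' direction $\theta_0$ supplied by \propref{prop:main prop}(c) happens to lie within angular distance comparable to $\HH(J)$ of the vertical direction $1/4$, which is automatic from $\theta_0 \in 3J$ after the normalizing rotation.
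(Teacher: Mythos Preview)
Your proof is correct and follows essentially the same approach as the paper: both apply \lemref{lem:ADRrectangles} directly to $\cR(x,r)$ using the direction $\theta_0\in 3J$, checking that $|\theta_0 - 1/4|\lesssim \HH(J) = \ell(\cR(x,r))/\ELL(\cR(x,r))$. The only cosmetic difference is that you bound $|\theta_0-1/4|$ by $\tfrac{3}{2}\HH(J)$ while the paper uses $2\HH(J)$, which is immaterial.
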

	\begin{proof}
		Observe that for $\cR=\cR(x,r)$ we have $\theta(\cR)=1/4\in J$. Recall that there exists $\theta_0\in 3J$ such that $\|\pi_{\theta_0}^\perp\mu\|_\infty \le M$. Since $|\theta_0-\theta(\cR)|\le 2\HH(J) = 2\ell(\cR)/\ELL(\cR)$, we get from \eqref{eq:ADRrectangles2}
		\begin{equation*}
		\mu(\cR(x,r))\lesssim M\ell(\cR(x,r))= Mr.
		\end{equation*}
	\end{proof}
	
	\subsection{Generalized dyadic cubes}
	We say that a metric space $(X,d)$ has a finite doubling property if any ball $B_X(x,2r)\subset X$ can be covered by finitely many balls of the form $B_X(x_i,r)$. The following is a special case of Theorem 2.1 from \cite{kaenmaki2012existence}.
	\begin{theorem}[\cite{kaenmaki2012existence}]\label{thm:dyadic cubes}
		Let $\rho=1/1000$. Suppose that $(X,d)$ is a metric space with the finite doubling property. Then, for every $k\in\Z$ there exists a collection $\DD_k$ of generalized cubes on $X$ such that the following hold:
		\begin{enumerate}
			\item For each $k\in\Z$, $X=\bigcup_{Q\in\DD_k}Q$, and the union is disjoint.
			\item If $Q_1,Q_2\in\bigcup_k\DD_k$ satisfy $Q_1\cap Q_2\neq\varnothing$, then either $Q_1\subset Q_2$ or $Q_2\subset Q_1$.
			\item For every $Q\in\DD_k$ there exists $x_Q\in Q$ such that
			\begin{equation*}
			B_X(x_Q, 0.4\rho^k)\subset Q\subset B_X(x_Q,2\rho^k).
			\end{equation*}
		\end{enumerate}
	\end{theorem}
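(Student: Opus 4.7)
The plan is to carry out the standard Christ--type construction of dyadic cubes in a doubling metric space, with constants tuned to the choice $\rho = 1/1000$. The construction proceeds in three stages: build a net of centers at every scale, organize them into an ancestry tree, and define the cubes as Voronoi-type cells glued along the tree.

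First, for each $k \in \Z$ I would pick a maximal $\rho^k$-separated subset $N_k = \{x_\alpha^k\}_{\alpha \in A_k}$ of $X$, so that $d(x_\alpha^k, x_\beta^k) \ge \rho^k$ for $\alpha \ne \beta$ and $X = \bigcup_\alpha B_X(x_\alpha^k, \rho^k)$; the finite doubling property ensures local finiteness of each $N_k$. Fix a total ordering on each index set $A_k$. For each $\alpha \in A_{k+1}$ assign a unique parent $p(\alpha) \in A_k$ by taking the smallest index minimizing $d(x_\alpha^{k+1}, x_\beta^k)$; the covering property gives $d(x_\alpha^{k+1}, x_{p(\alpha)}^k) < \rho^k$. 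Iterating $p$ produces a forest structure on $\bigsqcup_k N_k$ in which every center has a unique ancestor at every coarser scale, and hence a well-defined set $D^j(\alpha) \subset A_{k+j}$ of $j$-fold descendants.

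Next, define preliminary Voronoi cells $V_\alpha^k = \{x \in X : d(x, x_\alpha^k) \le d(x, x_\beta^k) \text{ for all } \beta\}$, broken into pairwise disjoint pieces using the ordering on $A_k$, and set
\[
Q_\alpha^k \;\coloneqq\; \bigcup_{j \ge 0} \bigcup_{\beta \in D^j(\alpha)} V_\beta^{k+j}.
\]
The partition property (1) is immediate since the $V$'s partition $X$ at each scale and each point of $X$ has a unique descendant chain. The nesting property (2) follows because the descendants of a descendant are descendants. For the outer inclusion in (3), every $\beta \in D^j(\alpha)$ satisfies
\[
d(x_\alpha^k, x_\beta^{k+j}) \;\le\; \sum_{i=0}^{j} \rho^{k+i} \;\le\; \frac{\rho^k}{1-\rho} \;<\; 2\rho^k,
\]
and every point of $V_\beta^{k+j}$ lies within $\rho^{k+j}$ of $x_\beta^{k+j}$, so $Q_\alpha^k \subset B_X(x_\alpha^k, 2\rho^k)$ comfortably.

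The delicate part, and the main obstacle, is the inner-ball inclusion $B_X(x_\alpha^k, 0.4\rho^k) \subset Q_\alpha^k$. One must show by induction on $j$ that every $y$ with $d(y, x_\alpha^k) \le 0.4\rho^k$ lies in a cell $V_\beta^{k+j}$ with $\beta \in D^j(\alpha)$, i.e.\ the level-$(k+j)$ center closest to $y$ is always a descendant of $\alpha$ and not a stray point from a different branch of the tree. The generous choice $\rho = 1/1000$ is precisely what creates the slack: the drift of descendants from their ancestor, bounded by the geometric series $\rho^k/(1-\rho)$, has to remain dominated by the gap between $0.4\rho^k$ and the level-$k$ separation $\rho^k/2$, and analogous comparisons must close at every finer scale. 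The tie-breaking ordering handles the boundary cases where $y$ is equidistant from several candidates. Once these triangle-inequality estimates go through, all three properties of the theorem hold simultaneously.
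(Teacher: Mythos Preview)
The paper does not prove this theorem at all: it is quoted verbatim as ``a special case of Theorem 2.1 from \cite{kaenmaki2012existence}'' and used as a black box, so there is no proof in the paper to compare against. Your outline is the standard Christ--K\"aenm\"aki--Rajala--Suomala construction and is correct in spirit; the inner-ball inclusion is indeed the only nontrivial point and the generous $\rho=1/1000$ makes the inductive separation estimate go through comfortably.
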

	
	Consider $X=E$ endowed with the metric
	\begin{equation}\label{eq:metric}
	d((x_1,y_1),(x_2,y_2)) = \max\big(|x_1-x_2|,\,\HH(J)\,{|y_1-y_2|}\big).
	\end{equation}
	Note that for $x\in E$ and $r>0$, the ball with respect to $d$ is of the form $B_X(x,r)=\cR(x,2r)\cap E$. 
	
	It is clear that $(E,d)$ has the finite doubling property, and so we may use \thmref{thm:dyadic cubes} to obtain a lattice of generalized cubes $\DD=\bigcup_{k\in\Z}\DD_k$ associated to $(E,d)$. 
	
	Given $Q\in\DD_k$, we will write
	\begin{align*}
	\ell(Q) &\coloneqq 4\rho^k,\\
	\Ch(Q) &\coloneqq \{P\in\DD_{k+1}\ :\ P\subset Q\},\\
	\DD(Q)&\coloneqq\{P\in\DD\ :\  P\subset Q,\ \ell(P)\le\ell(Q)\}.	
	\end{align*}

	Observe that $Q\subset \cR(x_Q,\ell(Q))\cap E$.
	We set 
	\begin{align}	
	\cR_Q&\coloneqq \cR(x_Q,\ell(Q)),\label{eq:Rq def}\\
	\ELL(Q) &\coloneqq\HH(J)^{-1}\ell(Q),\notag
	\end{align}
	so that $\ell(\cR_Q)=\ell(Q)$ and $\ELL(\cR_Q)=\ELL(Q)$.
	
	Note that if $P, Q\in\DD$ satisfy $P\cap Q=\varnothing$ and $\ell(P)\ge\ell(Q)$, then by (3) in \thmref{thm:dyadic cubes} we have $d(x_P, x_Q)\ge 0.1\ell(P)\ge 0.05\ell(P)+0.05\ell(Q)$, so in particular $0.1\cR_P\cap 0.1\cR_Q = \varnothing.$ We set
	\begin{equation*}
	\cR(Q)\coloneqq 0.1\cR_Q.
	\end{equation*}
	We record for future reference that
	\begin{align*}
	\cR(Q)\cap E\subset &Q\subset \cR_Q\cap E,\\
	2\cR_Q&\subset 2\cR_P\quad\quad\quad\text{if $Q\subset P$,}\\
	\cR(Q)\cap&\cR(P)=\varnothing\quad\quad\text{if $Q\cap P=\varnothing$}.
	\end{align*}

	Observe also that for any $C>0$ such that $C\ell(Q)\lesssim \diam(E)=1$ we have
	\begin{equation}\label{eq:ADR cubes}
	CC_0\,\ell(Q)\lesssim \mu(C\cR_Q)\overset{\eqref{eq:ADRrectangles}}{\lesssim} CM\ell(Q).
	\end{equation}
	In particular,
	\begin{equation*}
	C_0\,\ell(Q)\lesssim \mu(Q)\lesssim M\ell(Q).
	\end{equation*}
	
	\section{Conical energies}\label{sec:conical}
	Let $A=A(C_0,M)\ge 1000$ be a large constant which we will fix later on. Inspired by \cite{chang2017analytic} and \cite{dabrowski2020cones}, we introduce the following conical energy associated to the set of directions $G\subset J$.
		For any $Q\in\DD$ we set
		\begin{equation}\label{eq:energydef}
		\E_G(Q) \coloneqq\frac{1}{\mu(Q)} \int_{2A\cR_{Q}}\int_{A^{-1}\ELL(Q)}^{A^3\ELL(Q)} \frac{\mu(X(x,G,r))}{r}\, \frac{dr}{r}d\mu(x).
		\end{equation}	
	We have the following easy upper bound for $\E_G(Q)$.
	\begin{lemma}
		For any $Q\in\DD$ we have
		\begin{equation}\label{eq:trivial energy bound}
		\E_G(Q)\lesssim_{A,M,C_0} \HH(J).
		\end{equation}
	\end{lemma}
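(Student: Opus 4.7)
The strategy is direct: brutally replace $G$ by $J$, exploit the fact that $J$ is a narrow interval centered at $1/4$ to fit the truncated cone $X(x,J,r)$ inside a vertical rectangle of aspect ratio exactly $\HH(J)^{-1}$, and then invoke the rectangle mass bound \eqref{eq:ADRrectangles} that is already available.

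First, since $G\subset J$, monotonicity gives $\mu(X(x,G,r))\le \mu(X(x,J,r))$. By the algebraic characterization \eqref{eq:cone algebraic} of the cone $X(x,J)$ (with $\theta=1/4$ and half-length $\HH(J)/2$), every $y\in X(x,J,r)$ satisfies $|y-x|\le r$ together with
\begin{equation*}
|\pi_0(y)-\pi_0(x)| \le \sin(\pi\HH(J))\,|x-y| \lesssim \HH(J)\,r.
\end{equation*}
Since $\cR(x,s)$ has horizontal side $s$ and vertical side $s/\HH(J)$, we can fix an absolute constant $C\ge 1$ so that
\begin{equation*}
X(x,G,r)\subset X(x,J,r)\subset \cR(x,C\HH(J)\,r).
\end{equation*}
The point is that this is a rectangle of the ``correct'' aspect ratio $\HH(J)^{-1}$, so \eqref{eq:ADRrectangles} applies and yields
\begin{equation*}
\mu(X(x,G,r)) \lesssim M\HH(J)\,r.
\end{equation*}

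Plugging this into the definition \eqref{eq:energydef}, the integrand $\mu(X(x,G,r))/r$ is bounded by $M\HH(J)$, and the $r$-integral over $[A^{-1}\ELL(Q),A^3\ELL(Q)]$ contributes a factor $\log(A^4)\lesssim \log A$. Thus
\begin{equation*}
\E_G(Q) \lesssim \frac{M\HH(J)\log A}{\mu(Q)}\,\mu(2A\cR_Q).
\end{equation*}
Finally, \eqref{eq:ADR cubes} gives $\mu(2A\cR_Q)\lesssim AM\ell(Q)$, while AD-regularity gives $\mu(Q)\gtrsim_{C_0}\ell(Q)$. The factor $\ell(Q)$ cancels, and we are left with a bound of order $C_0 AM^2\log A\cdot\HH(J)\lesssim_{A,M,C_0}\HH(J)$, as desired.

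There is no genuine obstacle here: the argument is pure bookkeeping. The only real insight is structural, namely that a cone of opening $\HH(J)$ around the vertical direction is contained in a rectangle of aspect ratio $\HH(J)^{-1}$, precisely so that the single-direction $L^\infty$ projection bound on $\pi_{\theta_0}^\perp\mu$ (with $\theta_0\in 3J$) feeds into the rectangle mass estimate without losses that blow up as $\HH(J)\to 0$. This is the same robustness phenomenon emphasized in Section~\ref{sec:rectangles} in the derivation of \lemref{lem:ADRrectangles}, and it is what allows the exponent $\HH(J)$ to appear on the right-hand side.
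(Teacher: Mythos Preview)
Your proof is correct and follows essentially the same approach as the paper's: both arguments use $G\subset J$ to fit the truncated cone inside a vertical rectangle of aspect ratio $\HH(J)^{-1}$, invoke the rectangle mass bound \eqref{eq:ADRrectangles}, and then compare $\mu(2A\cR_Q)$ with $\mu(Q)$ via \eqref{eq:ADR cubes}. The only cosmetic difference is that the paper first replaces $r$ by its maximum $A^3\ELL(Q)$ and lands in the fixed rectangle $\cR(x,A^4\ell(Q))$, whereas you keep $r$ variable and land in $\cR(x,C\HH(J)r)$; the resulting constants differ slightly but the argument is the same.
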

	\begin{proof}
		Observe that for any $x\in 2A\cR_Q$ and $r\in (A^{-1}\ELL(Q), A^3\ELL(Q))$ we have
		\begin{equation*}
		X(x,G,r)\subset X(x,J,A^3\ELL(Q))\subset \cR(x,A^4\ell(Q)),
		\end{equation*}
		so that
		\begin{equation*}
		\mu(X(x,G,r))\le\mu(\cR(x,A^4\ell(Q)))\overset{\eqref{eq:ADRrectangles}}{\lesssim}A^4M\,\ell(Q).
		\end{equation*}
		Hence,
		\begin{multline*}
		\E_G(Q) =\frac{1}{\mu(Q)} \int_{2A\cR_{Q}}\int_{A^{-1}\ELL(Q)}^{A^3\ELL(Q)} \frac{\mu(X(x,G,r))}{r}\, \frac{dr}{r}d\mu(x)\\
		 \lesssim_{A,M} \frac{1}{\mu(Q)} \int_{2A\cR_{Q}}\int_{A^{-1}\ELL(Q)}^{A^3\ELL(Q)} \frac{\ell(Q)}{\ELL(Q)}\, \frac{dr}{r}d\mu(x)\\
		  \sim_A \HH(J)\frac{\mu(2A\cR_Q)}{\mu(Q)}\overset{\eqref{eq:ADR cubes}}{\lesssim}_{A,M,C_0} \HH(J).
		\end{multline*}
	\end{proof}
	
	\subsection{Stopping time argument}	
	Given a small constant $\delta=\delta(A,M,C_0)>0$, we consider the following stopping time condition. For $R\in\DD$, we define the family $\BCE(R)$ as the family of maximal cubes $Q\in\DD(R)$ such that
	\begin{equation}\label{eq:BCE def}
	\sum_{S\in\DD: Q\subset S\subset R} \E_G(S)\ge \delta\HH(J).
	\end{equation}
	We define also $\Tree(R)$ as the subfamily of $\DD(R)$ consisting of cubes that are not strictly contained in any cube from $\BCE(R)$. Note that it may happen that $R\in\BCE(R)$, in which case $\Tree(R)=\{R\}$.
	
	\begin{lemma}
		For any $R\in\DD$ we have
		\begin{equation}\label{eq:energy small inside tree}
		\sum_{Q\in\Tree(R)\setminus\BCE(R)}\E_G(Q)\mu(Q)\le \delta\HH(J)\mu(R),
		\end{equation}
		and
		\begin{equation}\label{eq:energy lower bound}
		\delta\HH(J)\sum_{P\in\BCE(R)}\mu(P)\le \sum_{Q\in\Tree(R)}\E_G(Q)\mu(Q)\lesssim_{A,M,C_0} \HH(J)\mu(R).
		\end{equation}
	\end{lemma}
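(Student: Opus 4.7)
The plan is to push through the stopping-time definitions by exploiting the monotonicity of the partial-sum functional
\[
\sigma(Q)\coloneqq\sum_{S\in\DD:\,Q\subset S\subset R}\E_G(S),
\]
which is non-decreasing as $Q$ descends. Two preparatory observations set everything up. First, every $Q\in\Tree(R)\setminus\BCE(R)$ must satisfy $\sigma(Q)<\delta\HH(J)$: if instead $\sigma(Q)\ge\delta\HH(J)$, then the largest ancestor $Q'\supset Q$ with $Q'\subset R$ and $\sigma(Q')\ge\delta\HH(J)$ would by \eqref{eq:BCE def} lie in $\BCE(R)$, forcing either $Q=Q'\in\BCE(R)$ or $Q\subsetneq Q'\in\BCE(R)$ (hence $Q\notin\Tree(R)$), a contradiction. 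Second, maximality in \eqref{eq:BCE def} makes $\BCE(R)$ a pairwise-disjoint family of subcubes of $R$.

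For \eqref{eq:energy small inside tree} I would Fubini across the $(Q,\mu)$ variables:
\[
\sum_{Q\in\Tree(R)\setminus\BCE(R)}\E_G(Q)\mu(Q)=\int_R\Big(\sum_{\substack{Q\in\Tree(R)\setminus\BCE(R)\\ x\in Q}}\E_G(Q)\Big)\,d\mu(x).
\]
For each $x\in R$ the cubes of $\Tree(R)\setminus\BCE(R)$ containing $x$ form a descending chain terminating just above the (unique, if any) BCE-cube containing $x$; every ancestor up to $R$ belongs to the same family, so the inner sum is the monotone limit of partial sums of the form $\sigma(Q)$ with $Q\in\Tree(R)\setminus\BCE(R)$, and hence at most $\delta\HH(J)$ by the first preparatory observation. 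Integrating over $R$ yields \eqref{eq:energy small inside tree}. For the lower bound in \eqref{eq:energy lower bound} I would multiply the defining inequality $\sigma(P)\ge\delta\HH(J)$ by $\mu(P)$ for each $P\in\BCE(R)$, sum, and exchange the two sums; pairwise disjointness of the BCE cubes gives $\sum_{P\in\BCE(R),\,P\subset S}\mu(P)\le\mu(S)$ for every $S\in\Tree(R)$, and the bound drops out.

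The upper bound in \eqref{eq:energy lower bound} I would handle by the split $\Tree(R)=(\Tree(R)\setminus\BCE(R))\sqcup\BCE(R)$: the first piece is bounded by $\delta\HH(J)\mu(R)$ via \eqref{eq:energy small inside tree}, while for the second piece the trivial bound \eqref{eq:trivial energy bound} gives $\E_G(Q)\lesssim_{A,M,C_0}\HH(J)$ uniformly in $Q\in\BCE(R)$, and disjointness of BCE cubes collapses $\sum_{Q\in\BCE(R)}\mu(Q)$ into at most $\mu(R)$. The whole argument is bookkeeping around the stopping-time construction, so I do not anticipate a genuine obstacle; the most delicate point is the uniform strict bound $\sigma(Q)<\delta\HH(J)$ for $Q\in\Tree(R)\setminus\BCE(R)$, which drives the Fubini collapse and therefore both the first inequality and the upper half of the second.
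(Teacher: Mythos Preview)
Your proposal is correct and follows essentially the same route as the paper: Fubini to turn $\sum_Q \E_G(Q)\mu(Q)$ into $\int_R\sum_{Q\ni x}\E_G(Q)\,d\mu$, the bound $\sigma(Q)<\delta\HH(J)$ for $Q\in\Tree(R)\setminus\BCE(R)$ to control the inner sum, the defining inequality $\sigma(P)\ge\delta\HH(J)$ together with interchange of summation and disjointness of $\BCE(R)$ for the lower bound, and the split $\Tree(R)=(\Tree(R)\setminus\BCE(R))\cup\BCE(R)$ plus \eqref{eq:trivial energy bound} for the upper bound. Your explicit verification that the ancestor chain of a cube in $\Tree(R)\setminus\BCE(R)$ stays in that family is a detail the paper leaves implicit, but otherwise the arguments coincide.
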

	\begin{proof}
		
		We start by proving \eqref{eq:energy small inside tree}. Observe that
		\begin{multline*}
		\sum_{Q\in\Tree(R)\setminus\BCE(R)}\E_G(Q)\mu(Q)
		 = \sum_{Q\in\Tree(R)\setminus\BCE(R)}\int \E_G(Q)\one_{Q}(x)\,d\mu(x)\\
		 		= \int \sum_{Q\in\Tree(R)\setminus\BCE(R)} \E_G(Q)\one_{Q}(x)\,d\mu(x).
		\end{multline*}
		Let $x\in R$, and let $P\in\Tree(R)\setminus\BCE(R)$ be a cube with $x\in P$. Recalling that $P\notin\BCE(R)$ and the definition of $\BCE(R)$ \eqref{eq:BCE def}, we get
		\begin{equation*}
		\sum_{P\subset Q\subset R}\E_G(Q) <\delta\HH(J).
		\end{equation*}
		Since $P$ was an arbitrary cube with $P\in\Tree(R)\setminus\BCE(R)$ and $x\in P$, this gives
		\begin{equation*}
		\sum_{Q\in\Tree(R)\setminus\BCE(R)} \E_G(Q)\one_{Q}(x) \le \delta\HH(J).
		\end{equation*}
		Integrating over $x\in R$ yields
		\begin{equation*}
		\sum_{Q\in\Tree(R)\setminus\BCE(R)}\E_G(Q)\mu(Q)\le \delta\HH(J)\mu(R).
		\end{equation*}
		This proves \eqref{eq:energy small inside tree}. 
		
		The upper bound in \eqref{eq:energy lower bound} follows from \eqref{eq:energy small inside tree} and the trivial bound \eqref{eq:trivial energy bound} applied to $Q\in\BCE(R)$:
		\begin{equation*}
		\sum_{Q\in\BCE(R)}\E_G(Q)\mu(Q)\lesssim_{A,M,C_0} \HH(J)\sum_{Q\in\BCE(R)}\mu(Q)\le\HH(J)\mu(R).
		\end{equation*}
		
		Now we prove the lower bound in \eqref{eq:energy lower bound}. 
		We have
		\begin{multline}\label{eq:est1}
		\sum_{Q\in\Tree(R)}\E_G(Q)\mu(Q)
		=\int \sum_{Q\in\Tree(R)}\E_G(Q)\one_{Q}(x)\,d\mu(x)\\
		\ge \int\sum_{P\in\BCE(R)}\sum_{Q\in\Tree(R),\,P\subset Q}\E_G(Q)\one_{P}(x)\,d\mu(x).
		\end{multline}
		By \eqref{eq:BCE def} we have for every $P\in\BCE(R)$
		\begin{equation*}
		\sum_{Q\in\Tree(R),\,P\subset Q}\E_G(Q)\ge\delta\HH(J).
		\end{equation*}
		Hence,
		\begin{multline*}
		\int\sum_{P\in\BCE(R)}\sum_{Q\in\Tree(R),\,P\subset Q}\E_G(Q)\one_{P}(x)\,d\mu(x)\ge \delta\HH(J)\sum_{P\in\BCE(R)}\int\one_{P}(x)\,d\mu(x)\\
		= \delta\HH(J)\sum_{P\in\BCE(R)}\mu(P).
		\end{multline*}
		Together with \eqref{eq:est1}, this gives the desired estimate.
	\end{proof}
	
	\subsection{Corona decomposition}\label{subsec:corona}
	We are ready to perform the corona decomposition. Let $k(J)\in\Z$ be the largest integer such that for $Q\in\DD_{k(J)}$ we have 
	\begin{equation*}
	\ELL(Q) = 4\HH(J)^{-1}\rho^{k(J)}\ge 1.
	\end{equation*}
	Set $\DD_*=\bigcup_{k\ge k(J)}\DD_k$, and
	\begin{equation*}
	\Top_0 = \{\DD_{k(J)}\}.
	\end{equation*}
	If $\Top_k$ has already been defined, we set
	\begin{equation*}
	\Top_{k+1} = \bigcup_{R\in\Top_k}\bigcup_{Q\in\BCE(R)}\Ch(Q).
	\end{equation*}
	Finally,
	\begin{equation*}
	\Top = \bigcup_{k\ge 0 }\Top_k.
	\end{equation*}	
	Observe that
	\begin{equation*}
	\bigcup_{R\in\Top}\Tree(R) = \DD_*.
	\end{equation*}
	The following is a fairly standard computation.
	\begin{lemma}
		We have
		\begin{multline}\label{eq:energytocubes}
		\HH(J)\mu(E)+\int_{E}\int_0^1 \frac{\mu(X(x,G,r))}{r}\, \frac{dr}{r}d\mu(x)\\
		\sim_{A,M} \HH(J)\mu(E)+\sum_{Q\in\DD_*}\E_G(Q)\mu(Q).
		\end{multline}
	\end{lemma}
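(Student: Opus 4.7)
The plan is to apply Fubini's theorem to rewrite
\begin{equation*}
\sum_{Q\in\DD_*}\E_G(Q)\mu(Q)=\int_E\int_0^\infty\frac{\mu(X(x,G,r))}{r}\cdot\frac{N(x,r)}{r}\,dr\,d\mu(x),
\end{equation*}
where $N(x,r)\coloneqq\#\{Q\in\DD_*:x\in 2A\cR_Q,\ A^{-1}\ELL(Q)\le r\le A^3\ELL(Q)\}$, and then to reduce \eqref{eq:energytocubes} to two elementary bounds on $N(x,r)$ together with a tail estimate for $r>1$.

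First I would prove the uniform upper bound $N(x,r)\lesssim_A 1$. The scale constraint forces $\rho^k\in[r\HH(J)/(4A^3),\,rA\HH(J)/4]$, which pins $k$ to $O(\log A)$ consecutive scales. For a fixed scale, the spatial condition $x\in 2A\cR_Q$ translates in the metric $d$ of \eqref{eq:metric} to $d(x,x_Q)\le A\ell(Q)$, while distinct centers $x_Q$ of $\DD_k$-cubes are $d$-separated by $\ge 0.4\rho^k$ thanks to \thmref{thm:dyadic cubes}(3). Since $(\R^2,d)$ is just a rescaling of the max-norm and therefore doubling with constants independent of $\HH(J)$, a packing argument yields $\lesssim_A 1$ such centers per scale.

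Second, I would check that $N(x,r)\ge 1$ for every $x\in E$ and $r\in(0,1]$. Using $\rho^{k(J)}\in[\HH(J)/4,\,\HH(J)/(4\rho))$, a short arithmetic check confirms that for any $r\le 1$ the admissible interval for $\rho^k$ intersects $(0,\rho^{k(J)}]$ and has log-length exceeding $\log(1/\rho)$, so it always contains at least one $\rho^k$ with $k\ge k(J)$. The unique $Q\in\DD_k$ containing $x$ then fulfils $x\in Q\subset\cR_Q\subset 2A\cR_Q$ together with the scale condition. This gives immediately
\begin{equation*}
\int_E\int_0^1\frac{\mu(X(x,G,r))}{r}\frac{dr}{r}\,d\mu(x)\le\sum_{Q\in\DD_*}\E_G(Q)\mu(Q),
\end{equation*}
which is one direction of \eqref{eq:energytocubes}.

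For the converse direction, the bulk contribution to $\sum_Q\E_G(Q)\mu(Q)$ from $r\in(0,1]$ is $\lesssim_A$ the integral on the left of \eqref{eq:energytocubes} by the uniform bound on $N$. The tail contribution from $r>1$, nonzero only up to $r\sim A^3\ELL(R_{\text{top}})\lesssim_A 1$, has to be absorbed into $\HH(J)\mu(E)$. The key geometric observation is that, since $G\subset J$, $J$ is centered at $1/4$, and $\HH(J)$ is small, the narrow cone $X(x,G,r)$ is contained in the vertical rectangle $\cR(x,4r\HH(J))$; applying \lemref{lem:ADRrectangles} gives the clean bound $\mu(X(x,G,r))/r\lesssim M\HH(J)$ uniformly in $r$. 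Integrating over the log-window $r\in[1,A^4]$ and then over $x\in E$ produces a tail of size $\lesssim_{A,M}\HH(J)\mu(E)$, which closes the argument. The main obstacle is really just careful bookkeeping: the naive pointwise bound $\mu(X(x,G,r))\lesssim C_0 r$ would introduce a spurious $C_0$-dependence, so it is crucial to use the rectangle containment rather than AD-regularity of $E$ to handle the tail.
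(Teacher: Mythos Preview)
Your proof is correct and follows essentially the same approach as the paper's: both arguments rest on bounded overlap of the rectangles $2A\cR_Q$ within each generation and a tail estimate for $r>1$ using $X(x,G,r)\subset\cR(x,Cr\HH(J))$ together with \eqref{eq:ADRrectangles}. The only difference is cosmetic---you package the overlap count into a function $N(x,r)$ and invoke Fubini, whereas the paper sums generation by generation; for the tail the paper uses the slightly sharper containment $X(x,G,r)\cap E\subset\cR(x,2\HH(J))$ (valid because $\diam(E)=1$), but your version with the $r$-dependent rectangle still integrates to $\lesssim_{A,M}\HH(J)\mu(E)$ over the bounded log-window, so nothing is lost.
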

	\begin{proof}
		Fix $k\ge k(J)$. Using the fact that for $Q\in \DD_k$ the rectangles $2A\cR_Q$ have only bounded overlaps (with bound depending on $A$), we have
		\begin{equation*}
		\sum_{Q\in\DD_k}\E_G(Q)\mu(Q) \sim_A \int_{E}\int_{4A^{-1}\HH(J)^{-1}\rho^k}^{4A^3\HH(J)^{-1}\rho^k} \frac{\mu(X(x,G,r))}{r}\, \frac{dr}{r}d\mu(x).
		\end{equation*}
		Summing over $k\ge k(J)$ we get
		\begin{equation*}
		\sum_{Q\in\DD_*}\E_G(Q)\mu(Q) \sim_A \int_{E}\int_0^{4A^3\HH(J)^{-1}\rho^{k(J)}} \frac{\mu(X(x,G,r))}{r}\, \frac{dr}{r}d\mu(x).
		\end{equation*}
		Recalling that $1\le 4\HH(J)^{-1}\rho^{k(J)}\lesssim 1$, we get that
		\begin{equation*}
		\sum_{Q\in\DD_*}\E_G(Q)\mu(Q) \sim_A \int_{E}\int_0^{CA^3} \frac{\mu(X(x,G,r))}{r}\, \frac{dr}{r}d\mu(x)
		\end{equation*}
		for some constant $1\le C\lesssim 1$. This is obviously no-smaller than the integral on the left hand side of \eqref{eq:energytocubes}. 
		
		To see the converse estimate, note that for $r>1$ we have $X(x,G,r)\cap E\subset \cR(x,2\HH(J))$, so that
		\begin{multline*}
		\int_{E}\int_1^{CA^3} \frac{\mu(X(x,G,r))}{r}\, \frac{dr}{r}d\mu(x) \lesssim \int_{E}\int_1^{CA^3} \frac{\mu(\cR(x,2\HH(J)))}{r}\, \frac{dr}{r}d\mu(x)\\
		\overset{\eqref{eq:ADRrectangles}}{\lesssim} M\HH(J)\int_{E}\int_1^{CA^3} \frac{1}{r^2}\, dr d\mu(x)\lesssim M\HH(J)\mu(E).
		\end{multline*}
	\end{proof}

	The family $\Top$ satisfies the following packing condition.
	\begin{lemma}
		We have
		\begin{equation}\label{eq:packing-for-Top}
		\sum_{R\in\Top}\mu(R)\lesssim_{\delta,A} (\HH(J))^{-1}\int_{E}\int_0^1 \frac{\mu(X(x,G,r))}{r}\, \frac{dr}{r}d\mu(x) + \mu(E).
		\end{equation}
	\end{lemma}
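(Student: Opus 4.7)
The plan is to estimate $\sum_{R\in\Top}\mu(R)$ generation by generation, using the $\BCE$-packing from \eqref{eq:energy lower bound} to absorb each generation into the cube-energy sum, and then invoking \eqref{eq:energytocubes} to pass to the continuous conical integral. Since $\Top_0=\DD_{k(J)}$ partitions $E$, the base generation contributes exactly $\mu(E)$. For $k\ge 0$, every $R\in\Top_{k+1}$ is by construction a child of some $Q\in\BCE(R')$ with $R'\in\Top_k$, and since the cubes of $\Top_k$ are pairwise disjoint (an easy induction, using that children of disjoint cubes remain disjoint) and $\Ch(Q)$ partitions $Q$, we get
\begin{equation*}
\sum_{R\in\Top_{k+1}}\mu(R)=\sum_{R'\in\Top_k}\sum_{Q\in\BCE(R')}\mu(Q).
\end{equation*}

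Next I apply the lower bound in \eqref{eq:energy lower bound} within each tree to obtain
\begin{equation*}
\sum_{Q\in\BCE(R')}\mu(Q)\le (\delta\HH(J))^{-1}\sum_{Q\in\Tree(R')}\E_G(Q)\mu(Q).
\end{equation*}
Summing over $R'\in\Top_k$ and then over $k\ge 0$, and using that the families $\{\Tree(R')\}_{R'\in\Top}$ form a disjoint partition of $\DD_*$ (each $Q\in\DD_*$ lies in the tree rooted at its smallest $\Top$-ancestor; were it also in a larger-root tree, there would be a $\BCE$-cube of that larger root strictly between them, contradicting membership in the smaller tree), the resulting double sum collapses to $\sum_{Q\in\DD_*}\E_G(Q)\mu(Q)$. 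Adding the base contribution,
\begin{equation*}
\sum_{R\in\Top}\mu(R)\le \mu(E)+(\delta\HH(J))^{-1}\sum_{Q\in\DD_*}\E_G(Q)\mu(Q).
\end{equation*}

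Finally, I invoke \eqref{eq:energytocubes} to bound $\sum_{Q\in\DD_*}\E_G(Q)\mu(Q)$ by a constant depending on $A$ and $M$ times
\begin{equation*}
\HH(J)\mu(E)+\int_{E}\int_0^{1}\frac{\mu(X(x,G,r))}{r}\,\frac{dr}{r}\,d\mu(x).
\end{equation*}
After substitution the term $(\delta\HH(J))^{-1}\cdot\HH(J)\mu(E)=\delta^{-1}\mu(E)$ merges harmlessly with the base $\mu(E)$, yielding the claimed inequality; the $M$-dependence is absorbed into the $\lesssim_{\delta,A}$ notation via $\delta=\delta(A,M,C_0)$. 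I do not anticipate a genuine obstacle: this is routine corona-style bookkeeping, and the only detail requiring care is the disjointness of the trees in the partition of $\DD_*$, which is immediate from the stopping-time construction.
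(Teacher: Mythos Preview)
Your proof is correct and follows essentially the same approach as the paper: split off $\Top_0$ (contributing $\mu(E)$), rewrite $\sum_{R\in\Top_{k+1}}\mu(R)=\sum_{R'\in\Top_k}\sum_{Q\in\BCE(R')}\mu(Q)$, apply the lower bound in \eqref{eq:energy lower bound}, collapse the trees into $\sum_{Q\in\DD_*}\E_G(Q)\mu(Q)$, and finish with \eqref{eq:energytocubes}. The only cosmetic difference is that you spell out the disjointness of the trees a bit more explicitly than the paper does.
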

	\begin{proof}
		First, we use the fact that the cubes $R\in\Top_0$ are pairwise disjoint to estimate
		\begin{equation*}
		\sum_{R\in\Top_0}\mu(R)\le \mu(E).
		\end{equation*}
		This gives the second term on the right hand side of \eqref{eq:packing-for-Top}. 
		
		Moving on to $\Top\setminus\Top_0$, we compute
%
		\begin{multline*}
		\sum_{R\in\Top\setminus\Top_0}\mu(R) = \sum_{k\ge 0} \sum_{R\in\Top_{k+1}}\mu(R)
		= \sum_{k\ge 0} \sum_{R\in\Top_{k}}\sum_{Q\in\BCE(R)}\sum_{P\in\Ch(Q)}\mu(P)\\
		=\sum_{k\ge 0} \sum_{R\in\Top_{k}}\sum_{Q\in\BCE(R)}\mu(Q)
		 \overset{\eqref{eq:energy lower bound}}{\le} (\delta\HH(J))^{-1} \sum_{k\ge 0} \sum_{R\in\Top_{k}}\sum_{Q\in\Tree(R)}\E_G(Q)\mu(Q)
		 \\
		 = (\delta\HH(J))^{-1} \sum_{Q\in\DD_*}\E_G(Q)\mu(Q)
		 \\
		 \lesssim_{A,M} (\delta\HH(J))^{-1}\int_{E}\int_0^1 \frac{\mu(X(x,G,r))}{r}\, \frac{dr}{r}d\mu(x) + \delta^{-1}\mu(E).
		\end{multline*}
	\end{proof}
	
	Consider the following conical energy associated to $3J$:
	\begin{equation*}
	\E_J(Q) \coloneqq\frac{1}{\mu(Q)} \int_{Q}\int_{\rho\ELL(Q)}^{\ELL(Q)} \frac{\mu(X(x,3J,r))}{r}\, \frac{dr}{r}d\mu(x).
	\end{equation*}
	Arguing as in \eqref{eq:energytocubes}, it is easy to show that
	\begin{equation}\label{eq:est3}
	\int_{E}\int_0^1 \frac{\mu(X(x,3J,r))}{r}\, \frac{dr}{r}d\mu(x) \lesssim \sum_{Q\in\DD_*}\E_J(Q)\mu(Q).
	\end{equation}
	
	We divide the conical energy $\E_J(Q)$ into an ``interior'' and ``exterior'' part, which will be dealt with separately:
	\begin{align*}
	\E^{{int}}_J(Q) &\coloneqq \frac{1}{\mu(Q)} \int_{Q}\int_{\rho\ELL(Q)}^{\ELL(Q)} \frac{\mu(X(x,0.5J,r))}{r}\, \frac{dr}{r}d\mu(x),\\
	\E^{{ext}}_J(Q) &\coloneqq \frac{1}{\mu(Q)} \int_{Q}\int_{\rho\ELL(Q)}^{\ELL(Q)} \frac{\mu(X(x,3J\setminus 0.5J,r))}{r}\, \frac{dr}{r}d\mu(x).
	\end{align*}
	
	We define also the following modification of $\E_J^{ext}(Q)$
	\begin{equation*}
	\widetilde{\E}_J^{ext}(Q) \coloneqq\frac{1}{\mu(Q)} \int_{Q} \frac{\mu(X(x,3J\setminus 0.5J,\rho\ELL(Q), \ELL(Q)))}{\ELL(Q)}\, d\mu(x).
	\end{equation*}
	\begin{lemma}
		We have
		\begin{equation}\label{eq:est4}
			\sum_{Q\in\DD_*} \E_J^{ext}(Q)\mu(Q)\lesssim \sum_{Q\in\DD_*} \EE^{ext}(Q)\mu(Q).
		\end{equation}
	\end{lemma}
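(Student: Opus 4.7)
The plan is to convert the double integral defining $\E_J^{ext}(Q)$ into an outer-annular piece (which after averaging becomes exactly $\EE^{ext}(Q)$) plus a single inner-ball remainder, and then to collapse those inner remainders against the outer pieces of the descendants of $Q$ at finer levels via a telescoping in the generations of $\DD$.

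For the pointwise bound, fix $Q\in\DD_*$ and $x\in Q$. Since $r\mapsto\mu(X(x,3J\setminus 0.5J,r))$ is monotone increasing,
\begin{equation*}
\int_{\rho\ELL(Q)}^{\ELL(Q)} \frac{\mu(X(x,3J\setminus 0.5J,r))}{r^2}\, dr \le \mu(X(x,3J\setminus 0.5J,\ELL(Q)))\int_{\rho\ELL(Q)}^{\ELL(Q)}\frac{dr}{r^2}\lesssim \frac{\mu(X(x,3J\setminus 0.5J,\ELL(Q)))}{\ELL(Q)},
\end{equation*}
with an absolute implicit constant because $\rho=1/1000$ is fixed. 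Splitting $X(x,3J\setminus 0.5J,\ELL(Q))$ as the disjoint union of its annular part $X(x,3J\setminus 0.5J,\rho\ELL(Q),\ELL(Q))$ and its inner ball $X(x,3J\setminus 0.5J,\rho\ELL(Q))$, and then averaging over $x\in Q$, yields $\E_J^{ext}(Q)\lesssim \EE^{ext}(Q) + I(Q)$ where
\begin{equation*}
I(Q)\coloneqq \frac{1}{\mu(Q)}\int_Q \frac{\mu(X(x,3J\setminus 0.5J,\rho\ELL(Q)))}{\ELL(Q)}\, d\mu(x).
\end{equation*}

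It remains to show $\sum_Q I(Q)\mu(Q)\lesssim \sum_Q \EE^{ext}(Q)\mu(Q)$. For $x\in Q\in\DD_k$ and $j\ge 1$, let $Q^{(j)}(x)\in\DD_{k+j}$ denote the descendant of $Q$ containing $x$; it satisfies $\ELL(Q^{(j)}(x))=\rho^j\ELL(Q)$, so the dyadic annuli $B(x,\rho^j\ELL(Q))\setminus B(x,\rho^{j+1}\ELL(Q))$, $j\ge 1$, correspond exactly to the outer-annular scales entering $\EE^{ext}(Q^{(j)}(x))$. Partitioning the inner ball into these annuli and then regrouping pairs $(Q,x)$ according to $Q'=Q^{(j)}(x)$ gives
\begin{equation*}
\sum_{Q\in\DD_*} I(Q)\mu(Q) = \sum_{Q'\in\DD_*}\EE^{ext}(Q')\mu(Q')\sum_{\substack{Q\in\DD_*\\ Q\supsetneq Q'}}\rho^{k(Q')-k(Q)}\le \frac{\rho}{1-\rho}\sum_{Q'\in\DD_*}\EE^{ext}(Q')\mu(Q'),
\end{equation*}
which, combined with the pointwise bound, proves \eqref{eq:est4}. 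There is no genuine obstacle: the argument is Fubini plus a telescoping exploiting the fixed geometric gap between the inner cutoff $\rho\ELL(Q)$ and the outer cutoff $\ELL(Q)$, and because $\rho$ is an absolute constant the final implicit constant is absolute, in particular independent of $\HH(J)$, $M$, and $C_0$.
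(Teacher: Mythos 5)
Your proof is correct and takes essentially the same route as the paper: both bound the integrand at scales $r\in(\rho\ELL(Q),\ELL(Q))$ by $\mu(X(x,3J\setminus 0.5J,\ELL(Q)))/\ELL(Q)$, decompose that cone into the $\rho$-adic annuli attached to the chain of descendants of $Q$ containing $x$ (so each annulus is the one entering $\EE^{ext}$ of the corresponding descendant), and finish by Fubini and a geometric series in $\ell(P)/\ell(Q)$. The only difference is presentational: you split off the top annulus ($P=Q$) and treat the inner ball as a separate remainder $I(Q)$, while the paper keeps everything in a single sum over $P\in\DD(Q)$.
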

	\begin{proof}
		Given $x\in Q$, we set
		\begin{equation*}
			X(x,Q) = X(x,3J\setminus 0.5J,\rho\ELL(Q), \ELL(Q)).
		\end{equation*}
		If $Q=Q_0(x)\supset Q_1(x)\supset Q_2(x)\supset\dots$ is a sequence of cubes such that for all $i\in\mathbb{N}$ we have $Q_{i+1}(x)\in\Ch(Q_i(x))$ and $x\in Q_i(x)$, then
		\begin{equation*}
			\mu(X(x,3J\setminus 0.5 J,\ELL(Q))) = \sum_{i\in\mathbb{N}} \mu(X(x,Q_i(x))).
		\end{equation*}
	Thus, for $x\in Q$ and $\rho\ELL(Q)<r<\ELL(Q)$
	\begin{equation*}
		\frac{\mu(X(x,3J\setminus 0.5 J,r)}{r} \lesssim \sum_{i\in\mathbb{N}} \frac{\mu(X(x,Q_i(x)))}{\ELL(Q)} = \sum_{i\in\mathbb{N}} \frac{\mu(X(x,Q_i(x)))}{\ELL(Q_i(x))}\cdot\frac{\ell(Q_i(x))}{\ell(Q)}.
	\end{equation*}
	Integrating over $x\in Q$ and $\rho\ELL(Q)<r<\ELL(Q)$ yields
	\begin{equation*}
		\E_J^{ext}(Q)\mu(Q) \lesssim \sum_{P\in\DD(Q)} \EE^{ext}(P)\mu(P)\frac{\ell(P)}{\ell(Q)}.
	\end{equation*}
	We sum over $Q\in\DD_*$ and conclude that
	\begin{multline*}
		\sum_{Q\in\DD_*}\E_J^{ext}(Q)\mu(Q) \lesssim \sum_{Q\in\DD_*}\sum_{P\in\DD(Q)} \EE^{ext}(P)\mu(P)\frac{\ell(P)}{\ell(Q)}\\
		 = \sum_{P\in\DD_*}\EE^{ext}(P)\mu(P)\sum_{Q\in\DD_*,\, Q\supset P}\frac{\ell(P)}{\ell(Q)} \lesssim \sum_{P\in\DD_*}\EE^{ext}(P)\mu(P),
	\end{multline*}
	where in the last inequality we used the fact that the inner sum was a geometric series.
	\end{proof}

	We will prove the following estimates for the interior and exterior energies.
	\begin{lemma}\label{lem:energy estimates}
		If $\varepsilon=\varepsilon(M,C_0)$ is chosen small enough, then for any $R\in\Top$ we have
		\begin{equation}\label{eq:interior energy est}
			\sum_{Q\in\Tree(R)}\E_J^{int}(Q)\mu(Q)
			\lesssim_{C_0} \sum_{Q\in\Tree(R)}\E_G(Q)\mu(Q).
		\end{equation}
		Furthermore, if $A=A(C_0,M)$ is chosen big enough, and $\delta=\delta(A,M,C_0)$ is chosen small enough, then
		\begin{equation}\label{eq:exterior energy est}
			\sum_{Q\in\Tree(R)}\EE^{ext}(Q)\mu(Q) \lesssim_{C_0,M} \HH(J)\mu(R).
		\end{equation}
	\end{lemma}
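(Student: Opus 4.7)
The plan is to prove the two estimates by separate arguments: \eqref{eq:interior energy est} is a cube-wise comparison exploiting the hypothesis $\HH(G)\ge(1-\varepsilon)\HH(J)$ together with the projection bounds at the directions $\theta_I$, while \eqref{eq:exterior energy est} reduces to the Key Geometric Lemma of Section~\ref{sec:keygeometric} once an empty-cone condition has been secured via the stopping-time construction.

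For \eqref{eq:interior energy est}, fix $Q\in\Tree(R)$ and decompose pointwise
\[
X(x,0.5J,r)\subset X(x,G,r)\cup X(x,(0.5J)\setminus G,r).
\]
Since $Q\subset 2A\cR_Q$ and, once $A\ge \rho^{-1}$, the scale window $(\rho\ELL(Q),\ELL(Q))$ sits inside $(A^{-1}\ELL(Q),A^3\ELL(Q))$, the contribution from $X(x,G,r)$ to $\E_J^{int}(Q)$ is dominated by $\E_G(Q)$ directly from the definitions. For the residual cone, I would decompose $(0.5J)\setminus G$ into its connected components $\{I_j\}$; each $I_j$ lies inside a unique interval $I$ comprising $J\setminus G$, and hypothesis (e) of \propref{prop:main prop} supplies a direction $\theta_I\in 3I$ with $\|\pi_{\theta_I}^\perp\mu\|_\infty\le M$. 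Since every direction in $I_j$ is within $\lesssim \HH(I)$ of $\theta_I$, the algebraic characterization \eqref{eq:cone algebraic} places $X(x,I_j,r)$ inside a $\pi_{\theta_I}^\perp$-slab of thickness $\lesssim \HH(I)r$, whence $\mu(X(x,I_j,r))\lesssim M\HH(I)r$. Summing over $j$ and using $\sum_I \HH(I)\le\HH(J\setminus G)\le \varepsilon\HH(J)$ gives $\mu(X(x,(0.5J)\setminus G,r))\lesssim M\varepsilon \HH(J)r$, which after $r$- and $x$-integration contributes an error $\lesssim M\varepsilon\HH(J)$ per cube. Choosing $\varepsilon=\varepsilon(M,C_0)$ small enough (and using the AD-regularity comparison between $\mu(Q)$ and $\mu(2A\cR_Q)$ to relate this averaged residual back to $\E_G(Q)$) absorbs the error, giving $\E_J^{int}(Q)\lesssim_{C_0} \E_G(Q)$ cube by cube, and hence \eqref{eq:interior energy est} after summing.

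For \eqref{eq:exterior energy est}, the first step is to upgrade the interior estimate via \eqref{eq:energy lower bound} and a Chebyshev argument to an empty-cone condition: taking $\delta=\delta(A,M,C_0)$ sufficiently small, $\mu$-a.e.\ $x\in R$ satisfies $X(x,0.5J,r)\cap E=\{x\}$ at every scale $r$ associated to cubes in $\Tree(R)$, outside a controllable exceptional set. At any such pair $(x,r)$ the Key Geometric Lemma then forces any point of $E$ in the outer cone $X(x,3J\setminus 0.5J,\rho\ELL(Q),\ELL(Q))$ to generate a connected component $K$ of $\R\setminus\pi_0(E)$ with $\HH(K)\sim \HH(J)\ELL(Q)$ and $\pi_0(x)\in CK$. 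Hence $\EE^{ext}(Q)\mu(Q)$ is controlled by $\HH(J)$ times the $\mu$-mass of those $x\in Q$ whose horizontal projection lies in a bounded enlargement of such a gap. Summing over $Q\in\Tree(R)$, scales differ by factors of $\rho$ and the gap produced at scale $\ELL(Q)$ is essentially pinned down by $\pi_0(x)$ at resolution $\HH(J)\ELL(Q)$, so each component of $\R\setminus\pi_0(E)$ inside $\pi_0(\cR_R)$ is charged only boundedly many times, and the total is $\lesssim \HH(J)\HH(\pi_0(\cR_R))\lesssim \HH(J)\mu(R)$ by AD-regularity. The principal obstacle is this exterior step: beyond the Key Geometric Lemma itself, one must calibrate $A$ large and $\delta$ small in a mutually compatible way so that the empty-cone condition really holds $\mu$-a.e.\ at every tree scale, and the gap summation must be organized so that overlapping horizontal intervals are not multiply counted.
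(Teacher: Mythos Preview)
Your sketch for \eqref{eq:interior energy est} has a real gap at the absorption step. Your decomposition yields, for each $Q$,
\[
\E_J^{int}(Q)\le \E_G(Q) + C\,M\varepsilon\,\HH(J),
\]
but there is no mechanism to bound the additive error $CM\varepsilon\HH(J)$ by $\E_G(Q)$: the latter can be arbitrarily small (even zero) on cubes where $X(x,G,r)\cap E=\varnothing$, while $\E_J^{int}(Q)$ may still be positive because of mass sitting in $X(x,(0.5J)\setminus G,r)$. Summing does not help either, since $\sum_{Q\in\Tree(R)}\mu(Q)$ is typically infinite. The vague appeal to ``the AD-regularity comparison between $\mu(Q)$ and $\mu(2A\cR_Q)$'' does not produce a lower bound for $\E_G(Q)$.

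The paper fixes this with a genuinely different idea (\lemref{lem:filling gaps}): rather than decomposing and absorbing, one shows the pointwise transfer estimate $\mu(X(x,0.9J,r))\lesssim_{C_0}\mu(X(x,G,2r))$. The point is that any $y\in X(x,0.9J,r)\cap E$ carries a ball $B(y,r_y)$, $r_y\sim\HH(J)|x-y|$, contained in $X(x,J,\cdot)$; AD-regularity gives $\mu(B(y,r_y))\gtrsim C_0^{-1}r_y$, while your own slab estimate gives $\mu(B(y,r_y)\cap X(x,J\setminus G))\lesssim M\varepsilon r_y$, so most of that ball lies in $X(x,G)$. A $5r$-covering then upgrades this to the claimed pointwise inequality, which immediately gives $\E_J^{int}(Q)\lesssim_{C_0}\E_G(Q)$.

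This pointwise transfer is also what drives your exterior step. The empty-cone condition needed for the Key Geometric Lemma must hold for \emph{every} $z\in A\cR_Q\cap E$ (see hypothesis \eqref{eq:empty cones2}), not merely $\mu$-a.e.; a Chebyshev argument cannot deliver this. In the paper (\lemref{lem:empty cones}) one argues by contradiction: if some $z\in A\cR_Q\cap E$ had a nonempty $0.5J$-cone, the pointwise estimate $\mu(X(\cdot,0.9J,r))\lesssim_{C_0}\mu(X(\cdot,G,2r))$ forces $\E_G(P)\gtrsim_{A,M,C_0}\HH(J)$ for a suitable ancestor $P$, contradicting the $\BCE$ stopping condition when $\delta$ is small. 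After that, your gap-counting outline is essentially correct and matches Section~\ref{sec:exterior energy}.
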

	We prove \eqref{eq:interior energy est} in Section \ref{sec:interior energy}, and \eqref{eq:exterior energy est} in Section \ref{sec:exterior energy}. Now we show how \propref{prop:main prop} follows from the estimates above.
	\begin{proof}[Proof of \propref{prop:main prop}]
		Recall that our goal is to prove
		\begin{multline}\label{eq:goal}
			\int_{E}\int_0^1 \frac{\mu(X(x,3J,r))}{r}\, \frac{dr}{r}d\mu(x)\\
			\lesssim_{C_0,M}\int_{E}\int_0^1 \frac{\mu(X(x,G,r))}{r}\, \frac{dr}{r}d\mu(x) + \HH(J)\mu(E).
		\end{multline}
	By \eqref{eq:est3}, the left hand side is bounded by
	\begin{multline*}
		\sum_{Q\in\DD_*}\E_J(Q)\mu(Q) = \sum_{Q\in\DD_*}\E_J^{int}(Q)\mu(Q) + \sum_{Q\in\DD_*}\E_J^{ext}(Q)\mu(Q)\\
		\overset{\eqref{eq:est4}}{\lesssim} \sum_{Q\in\DD_*}\E_J^{int}(Q)\mu(Q) + \sum_{Q\in\DD_*}\EE^{ext}(Q)\mu(Q)\\
		 = \sum_{R\in\Top}\sum_{Q\in\Tree(R)}\E_J^{int}(Q)\mu(Q) + \sum_{R\in\Top}\sum_{Q\in\Tree(R)}\EE^{ext}(Q)\mu(Q) \eqqcolon S_1 + S_2.
	\end{multline*}
	To estimate $S_1$, we apply \eqref{eq:interior energy est} and \eqref{eq:energytocubes} to conclude
	\begin{equation*}
		S_1\lesssim \sum_{R\in\Top}\sum_{Q\in\Tree(R)}\E_G(Q)\mu(Q) \lesssim_{A,M} \int_{E}\int_0^1\frac{\mu(X(x,G,r))}{r}\, \frac{dr}{r}d\mu(x) + \HH(J)\mu(E).
	\end{equation*}	
	Regarding $S_2$, using \eqref{eq:exterior energy est} and \eqref{eq:packing-for-Top} yields
	\begin{equation*}
		S_2 \lesssim_{M} \sum_{R\in\Top}\HH(J)\mu(R) \lesssim_{A,\delta} \int_{E}\int_0^1 \frac{\mu(X(x,G,r))}{r}\, \frac{dr}{r}d\mu(x) + \HH(J)\mu(E).
	\end{equation*}
	Recalling that $\delta=\delta(A,M,C_0)$ and $A=A(C_0,M)$, this gives \eqref{eq:goal}.
	\end{proof}
	
	\section{Estimating interior energy and obtaining good cones}\label{sec:interior energy}
	\subsection{Interior energy estimates}
	Recall that in \propref{prop:main prop}, assumption (e), we assumed that $G$ is closed, and that for every interval $I$ which is a connected component of $J\setminus G$ there exists $\theta_I\in 3I$ such that $\|\pi_{\theta_I}^\perp\mu\|_\infty \le M$. We use this property in the following lemma, which is the first step in estimating $\E_J^{int}(Q)$.
	\begin{lemma}\label{lem:littlemeas}
		For any $x\in\R^2$ and $0<r<\infty$ we have
		\begin{equation*}
		\mu(X(x,J\setminus G,r))\lesssim M \HH(J\setminus G)\, r.
		\end{equation*}
		In particular, since $\HH(J\setminus G)\le\varepsilon\HH(J)$, we have
		\begin{equation}\label{eq:littlemeas}
		\mu(X(x,J\setminus G,r))\lesssim M\varepsilon\HH(J)\, r.
		\end{equation}
	\end{lemma}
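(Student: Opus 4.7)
\smallskip
\noindent\textbf{Proof proposal.} Since $G\subset J$ is closed (assumption (d) of \propref{prop:main prop}), the complement $J\setminus G$ is relatively open in $J$ and decomposes into an at most countable disjoint union of intervals
\[
J\setminus G=\bigsqcup_{n}I_{n},\qquad \HH(J\setminus G)=\sum_{n}\HH(I_{n}).
\]
By assumption (e), for each $n$ there exists $\theta_{n}\coloneqq\theta_{I_{n}}\in 3I_{n}$ with $\|\pi_{\theta_{n}}^{\perp}\mu\|_{\infty}\le M$. The plan is to bound $\mu(X(x,I_{n},r))$ using this single $L^{\infty}$-projection estimate, and then sum over $n$.

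The geometric input is that, because all directions in $I_{n}$ are close to $\theta_{n}$, the cone $X(x,I_{n},r)$ is trapped in a thin strip transverse to $\ell_{\theta_{n}}^{\perp}$. Concretely, any $y\in X(x,I_{n},r)$ can be written as $y=x+t\,e_{\theta}$ for some $\theta\in I_{n}$ and $|t|\le r$, so
\[
\pi_{\theta_{n}}^{\perp}(y)-\pi_{\theta_{n}}^{\perp}(x)=t\,\bigl(e_{\theta_{n}}^{\perp}\cdot e_{\theta}\bigr)=t\sin\bigl(2\pi(\theta-\theta_{n})\bigr).
\]
Since $\theta\in I_{n}$ and $\theta_{n}\in 3I_{n}$, we have $|\theta-\theta_{n}|\le 2\HH(I_{n})$, hence $|\pi_{\theta_{n}}^{\perp}(y)-\pi_{\theta_{n}}^{\perp}(x)|\le 4\pi\, r\,\HH(I_{n})$. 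Therefore
\[
X(x,I_{n},r)\subset \bigl(\pi_{\theta_{n}}^{\perp}\bigr)^{-1}\!\bigl(\pi_{\theta_{n}}^{\perp}(x)+[-4\pi r\HH(I_{n}),\,4\pi r\HH(I_{n})]\bigr),
\]
and the $L^{\infty}$-bound on $\pi_{\theta_{n}}^{\perp}\mu$ immediately yields
\[
\mu\bigl(X(x,I_{n},r)\bigr)\le M\cdot 8\pi\, r\,\HH(I_{n})\lesssim M\,r\,\HH(I_{n}).
\]

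Finally, summing over $n$ (using countable subadditivity of $\mu$) gives
\[
\mu\bigl(X(x,J\setminus G,r)\bigr)\le \sum_{n}\mu\bigl(X(x,I_{n},r)\bigr)\lesssim M\,r\sum_{n}\HH(I_{n})=M\,\HH(J\setminus G)\,r,
\]
which is the claimed estimate; plugging in \eqref{eq:Gmeas} then gives the second inequality. The only nontrivial point is the elementary trigonometric containment in a thin strip, which is the place where the hypothesis $\theta_{I}\in 3I$ (rather than only $\theta_{I}\in I$) is used comfortably.
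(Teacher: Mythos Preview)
Your proof is correct and follows essentially the same approach as the paper: decompose $J\setminus G$ into its component intervals, use assumption (e) to trap each $X(x,I_n,r)$ in a thin set whose $\mu$-measure is $\lesssim M\HH(I_n)r$, and sum. The paper phrases the ``thin set'' step as containment in a rectangle and then invokes \lemref{lem:ADRrectangles}, whereas you go directly to a strip via the trigonometric identity, but this is just inlining the proof of that lemma.
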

	\begin{proof}		
		Let $\cB$ denote the intervals comprising $J\setminus G$,  so that for every $I\in\cB$ there exists $\theta_I\in 3I$ such that $\|\pi_{\theta_I}^\perp\mu\|_\infty \le M$. Clearly,
		\begin{equation*}
		X(x,J\setminus G,r)= \bigcup_{I\in\cB} X(x,I,r).
		\end{equation*}
		Observe that each truncated cone $X(x,I,r)$ is contained in some rectangle $\cR_I$ which is centered at $x$, its direction $\theta(\cR_I)\in\TT$ coincides with the midpoint of $I$, and it satisfies $\ell(\cR_I)\sim \HH(I)\,r,\, \ELL(\cR_I)\sim r$. Since 
		\begin{equation*}
		|\theta(R_I) - \theta_I|\le 2\HH(I) \sim \frac{\ell(\cR_I)}{\ELL(\cR_I)},
		\end{equation*}
		we may use \lemref{lem:ADRrectangles} (recall that $\|\pi_{\theta_I}^\perp\mu\|_\infty \le M$) to conclude that
		\begin{equation*}
		\mu(\cR_I)\lesssim M\ell(\cR_I) \sim M\HH(I)\, r.
		\end{equation*}
		It follows that
		\begin{multline*}
		\mu(X(x,J\setminus G,r))\le \sum_{I\in\cB} \mu(X(x,I,r))\\
		 \le \sum_{I\in\cB}\mu(\cR_I) \lesssim M r\sum_{I\in\cB}\HH(I) = M \HH(J\setminus G)\, r.
		\end{multline*}
	\end{proof}
	
	\begin{lemma}\label{lem:filling gaps}
		If $\varepsilon=\varepsilon(M,C_0)$ is small enough, then for any $x\in E$ and $0<r<\infty$ we have
		\begin{equation}\label{eq:filling gaps}
		\mu(X(x,0.9J,r))\lesssim_{C_0} \mu(X(x,G,2r)).
		\end{equation}
		In particular, $\E_J^{int}(Q)\lesssim_{C_0}\E_G(Q)$, and so \eqref{eq:interior energy est} holds.
	\end{lemma}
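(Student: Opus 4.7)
The plan is to establish the pointwise estimate \eqref{eq:filling gaps} via a Vitali covering argument that transfers mass from $X(x,0.9J,r) \cap E$ into the cone $X(x,G,2r)$, exploiting the $0.05\HH(J)$-angular buffer between $0.9J$ and $J$ together with the bad-direction bound from \lemref{lem:littlemeas}. Without loss of generality I may assume $r\le \diam(E)=1$, since enlarging $r$ beyond $1$ changes neither side.

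First I would fix a small absolute constant $c>0$ (think $c=0.1$) and consider the ball family
\begin{equation*}
\{B(y,c|x-y|\HH(J))\}_{y\in (X(x,0.9J,r)\cap E)\setminus\{x\}}.
\end{equation*}
By the Vitali covering lemma, I extract a disjoint subfamily $\{B_i\}=\{B(y_i,ct_i\HH(J))\}$, $t_i\coloneqq |x-y_i|>0$, such that $\{5B_i\}$ still covers $(X(x,0.9J,r)\cap E)\setminus\{x\}$. Since $ct_i\HH(J)<r\le 1=\diam(E)$ and $y_i\in E$, AD-regularity gives $\mu(5B_i)\lesssim_{C_0} t_i\HH(J)$, so
\begin{equation*}
\mu(X(x,0.9J,r))\;\le\;\sum_i \mu(5B_i)\;\lesssim_{C_0}\;\HH(J)\sum_i t_i.
\end{equation*}

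For the matching lower bound on $\mu(X(x,G,2r))$, I note that for $c$ small enough every $B_i$ is seen from $x$ inside an angular interval of length $O(c\HH(J))$ centered at $\theta_{y_i}\in 0.9J$, hence contained in $J$. Consequently
\begin{equation*}
B_i\;\subset\; X(x,G,2t_i)\,\cup\, X(x,J\setminus G,2t_i).
\end{equation*}
By \lemref{lem:littlemeas}, $\mu(X(x,J\setminus G,2t_i))\lesssim M\varepsilon\HH(J) t_i$, and AD-regularity gives $\mu(B_i)\gtrsim_{C_0} t_i\HH(J)$. Choosing $\varepsilon=\varepsilon(M,C_0)$ small enough (explicitly $\varepsilon\lesssim c/(MC_0)$), the bad part absorbs at most half of $\mu(B_i)$, yielding $\mu(B_i\cap X(x,G,2t_i))\gtrsim_{C_0} t_i\HH(J)$. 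Using disjointness of the $B_i$ and $X(x,G,2t_i)\subset X(x,G,2r)$,
\begin{equation*}
\mu(X(x,G,2r))\;\ge\;\sum_i \mu(B_i\cap X(x,G,2t_i))\;\gtrsim_{C_0}\;\HH(J)\sum_i t_i,
\end{equation*}
and combining with the upper bound yields \eqref{eq:filling gaps}.

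For the ``in particular'' statement, since $0.5J\subset 0.9J$, the pointwise bound produces $\mu(X(x,0.5J,r))\lesssim_{C_0}\mu(X(x,G,2r))$. Plugging this into the definition of $\E_J^{int}(Q)$ and making the change of variable $r'=2r$, the range $[\rho\ELL(Q),\ELL(Q)]$ becomes $[2\rho\ELL(Q),2\ELL(Q)]\subset[A^{-1}\ELL(Q),A^3\ELL(Q)]$ (for $A$ large, which is already assumed), and $Q\subset 2A\cR_Q$; this gives the pointwise energy bound $\E_J^{int}(Q)\lesssim_{C_0}\E_G(Q)$, and summing over $Q\in\Tree(R)$ yields \eqref{eq:interior energy est}. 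The delicate point is the calibration of the two small absolute/dependent constants: $c$ must be small enough that each $B_i$ stays inside $X(x,J)$ (respecting the $0.9J$-vs-$J$ buffer), and then $\varepsilon=\varepsilon(M,C_0)$ must be small enough that the $M\varepsilon\HH(J)t_i$ bound from \lemref{lem:littlemeas} is beaten by the $C_0^{-1}c\cdot t_i\HH(J)$ AD-regular lower bound on $\mu(B_i)$.
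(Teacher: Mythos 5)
Your proposal is correct and follows essentially the same route as the paper: cover $X(x,0.9J,r)\cap E$ by balls of radius $\sim c\,\HH(J)|x-y|$, use the angular buffer between $0.9J$ and $J$ to keep each ball inside $X(x,J)$, combine \lemref{lem:littlemeas} with the AD-regular lower bound $\mu(B)\gtrsim C_0^{-1}(\text{radius})$ to absorb the $J\setminus G$ contribution for $\varepsilon\lesssim c/(MC_0)$, and conclude with the $5r$-covering lemma. The only cosmetic difference is that the paper fixes a dyadic annulus $r_0\le|x-y|\le 2r_0$ before setting the ball radius, and compares $\mu(5B_i)\sim\mu(B_i)$ directly rather than routing both bounds through $\HH(J)\sum_i t_i$; the two bookkeeping schemes are equivalent.
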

		
	\begin{proof}
		If $X(x,0.9J,r)\cap E=\{x\}$, then there is nothing to prove, so suppose that $X(x,0.9J,r)\cap E\neq\{x\}$. 
		
		Let $y\in X(x,0.9J,r)\cap E\setminus \{x\}$, and let $0<r_0\le r/2$ be such that $y\in E\cap X(x,0.9J,r_0,2r_0)$.		
		Set $r_y = c\HH(J)\, r_0$ for some small absolute constant $c>0$, and observe that if $c$ is chosen small enough, then $B(y,r_y)\subset X(x,J, r_0/2,4r_0)$. 
		
		We use \lemref{lem:littlemeas} to estimate
		\begin{multline*}
		\mu(B(y,r_y)\cap X(x,J\setminus G,r_0/2,4r_0))\le \mu(X(x,J\setminus G,r_0/2,4r_0))\\
		\overset{\eqref{eq:littlemeas}}{\lesssim}M\varepsilon\HH(J) r_0\sim M\varepsilon r_y.
		\end{multline*}
		
		On the other hand, since $y\in E, r_y< r_0<\diam(E)=1$, and $B(y,r_y)\subset X(x,J, r_0/2,4r_0)$, we get from AD-regularity of $E$ that
		\begin{equation*}
		\mu(B(y,r_y)\cap X(x,J, r_0/2,4r_0))=\mu(B(y,r_y)) \gtrsim C_0^{-1} r_y.
		\end{equation*}
		The two estimates together give
		\begin{multline*}
		C_0^{-1} r_y\lesssim \mu(B(y,r_y)\cap X(x,J, r_0/2,4r_0))\\
		 = \mu(B(y,r_y)\cap X(x,G, r_0/2,4r_0)) + \mu(B(y,r_y)\cap X(x,J\setminus G, r_0/2,4r_0))\\
		 \le \mu(B(y,r_y)\cap X(x,G, r_0/2,4r_0)) + CM\varepsilon r_y.
		\end{multline*}
		Hence, assuming $\varepsilon=\varepsilon(M,C_0)$ small enough, we may absorb the second term on the right hand side to the left hand side, which gives
		\begin{multline}\label{eq:est2}
		\mu(B(y,r_y)\cap X(x,G,2r))\ge \mu(B(y,r_y)\cap X(x,G,r_0/2,4r_0)) \\
		\gtrsim C_0^{-1} r_y \sim_{C_0}\mu(B(y,r_y)).
		\end{multline}
		
		Now consider the family of balls
		\begin{equation*}
		\cB = \{B(y,r_y)\ :\ y\in X(x,0.9J,r)\cap E\setminus \{x\}\}.
		\end{equation*}
		By the $5r$-covering lemma, we may find a countable sub-collection $\cB' = \{B(y_i,r_{y_i})\}_{i\in\mathcal{I}}$ of pairwise disjoint balls such that $\{B(y_i,5r_{y_i})\}_{i\in\mathcal{I}}$ covers all of $X(x,0.9J,r)\cap E\setminus \{x\}$. Then,
		\begin{multline*}
		\mu(X(x,0.9J,r)\cap E)\le \mu\bigg(\bigcup_{i\in\mathcal{I}}B(y_i,5r_{y_i})\bigg)
		\le \sum_{i\in\mathcal{I}}\mu(B(y_i,5r_{y_i}))\\
		\sim_{C_0}\sum_{i\in\mathcal{I}}\mu(B(y_i,r_{y_i})) \overset{\eqref{eq:est2}}{\lesssim_{C_0}} \sum_{i\in\mathcal{I}}\mu(B(y_i,r_{y_i})\cap X(x,G,2r))\\
		 = \mu\bigg(\bigcup_{i\in\mathcal{I}}B(y_i,r_{y_i})\cap X(x,G,2r)\bigg)\le \mu(X(x,G,2r).
		\end{multline*}
	\end{proof}

	\subsection{Obtaining good cones}
	We will say that a (possibly truncated) cone $X$ is \emph{good} if it satisfies
	\begin{equation*}
	X\cap E = \varnothing.
	\end{equation*}
	Similarly, we will say that a rectangle $\cR$ is good if $\cR\cap E=\varnothing$.
	
	Having plenty of good cones and rectangles will be crucial for estimating the exterior energy $\EE^{ext}(Q)$ in Section \ref{sec:exterior energy}. In the lemma below we use \lemref{lem:filling gaps} and the $\BCE$-stopping condition to find many good cones.
	
	\begin{lemma}\label{lem:empty cones}
		If the $\BCE$-parameter $\delta=\delta(A,M,C_0)\in(0,1)$ is chosen small enough, then for all $R\in\Top,\, Q\in\Tree(R)\setminus\BCE(R),$ and $x\in A\cR_Q\cap E$ we have
		\begin{equation*}
		X(x,0.5J,A^{-1}\ELL(Q), A^2\ELL(R))\cap E=\varnothing.
		\end{equation*}
	\end{lemma}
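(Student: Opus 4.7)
\medskip
\noindent\textbf{Plan.} I would argue by contradiction: suppose there exists
\[y\in X(x,0.5J,A^{-1}\ELL(Q),A^2\ELL(R))\cap E\]
and set $r_0\coloneqq|x-y|\in [A^{-1}\ELL(Q),A^2\ELL(R)]$. The goal is to exhibit a \emph{single} cube $S$ with $Q\subset S\subset R$ for which $\E_G(S)\gtrsim_{A,C_0,M}\HH(J)$. Combined with $Q\in\Tree(R)\setminus\BCE(R)$, which by \eqref{eq:BCE def} forces $\sum_{Q\subset S'\subset R}\E_G(S')<\delta\HH(J)$, this produces a contradiction as soon as $\delta=\delta(A,C_0,M)$ is chosen smaller than the absolute constant coming out of the computation.

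\medskip
\noindent\textbf{Transferring $y$'s mass to the $G$-cone.} The key geometric fact is that $y$ forces every nearby point of $E$ to see mass in a $0.9J$-cone of scale $\sim r_0$. Concretely, picking a small absolute $c>0$ and exploiting the aperture slack between $0.5J$ and $0.9J$, the algebraic characterization \eqref{eq:cone algebraic} yields
\[B(y,c\HH(J)r_0)\subset X(x',0.9J,r)\quad\text{for all } x'\in B(x,c\HH(J)r_0),\ r\in[2r_0,4r_0].\]
AD-regularity of $E$ at $y$ then gives the pointwise lower bound $\mu(X(x',0.9J,r))\gtrsim C_0^{-1}\HH(J)r_0$, and \lemref{lem:filling gaps} upgrades this to
\[\mu(X(x',G,s))\gtrsim_{C_0}\HH(J)r_0\quad\text{for all } x'\in B(x,c\HH(J)r_0)\cap E,\ s\in[4r_0,8r_0].\]

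\medskip
\noindent\textbf{Picking $S$ and integrating.} Since $r_0\in[A^{-1}\ELL(Q),A^2\ELL(R)]$ and $A$ is large, there exists a cube $S$ with $Q\subset S\subset R$ whose scale window $[A^{-1}\ELL(S),A^3\ELL(S)]$ from \eqref{eq:energydef} contains $[4r_0,8r_0]$: generically the ancestor of $Q$ with $\ELL(S)\sim r_0$ works, while in the boundary regimes $r_0\le\ELL(Q)$ or $r_0\ge\ELL(R)$ one takes $S=Q$ or $S=R$ respectively (this is exactly where the generous $A$-slack in the scale window is used). For such $S$, the point $x$ lies in $A\cR_Q\subset 2A\cR_S$, the ball $B(x,c\HH(J)r_0)\cap E$ is contained in $2A\cR_S$ (taking $A$ large in terms of $c$), and AD-regularity gives $\mu(B(x,c\HH(J)r_0)\cap E)\gtrsim C_0^{-1}\HH(J)r_0$. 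Integrating the pointwise lower bound from the previous paragraph and using $\mu(S)\lesssim_A M\HH(J)r_0$ from \eqref{eq:ADR cubes} yields
\[\E_G(S)\ge\frac{1}{\mu(S)}\int_{B(x,c\HH(J)r_0)\cap E}\int_{4r_0}^{8r_0}\frac{\mu(X(x',G,s))}{s^2}\,ds\,d\mu(x')\gtrsim_{A,C_0,M}\HH(J),\]
which contradicts the $\BCE$-stopping inequality once $\delta$ is small enough.

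\medskip
\noindent\textbf{Main obstacle.} The pointwise cone inclusion and the integration are routine; the delicate part is uniformly handling the two extreme regimes $r_0\sim A^{-1}\ELL(Q)$ and $r_0\sim A^2\ELL(R)$, where only $S=Q$ or $S=R$ is available and one must verify that, after inflating to $2A\cR_S$, the integration domain still contains the ball $B(x,c\HH(J)r_0)\cap E$ with its AD-regular mass. This is precisely why the definition \eqref{eq:energydef} employs the wide radial window $[A^{-1}\ELL(Q),A^3\ELL(Q)]$ and the enlarged $2A\cR_Q$. A secondary check is that $c\HH(J)r_0\lesssim 1=\diam(E)$ so that AD-regularity applies at $y$; this holds thanks to $r_0\le A^2\ELL(R)\lesssim A^2$ and $\HH(J)\le c_1 C_0^{-1}M^{-1}$, and in the rare case that $c\HH(J)r_0$ is comparable to $\diam(E)$ one falls back on the global lower bound $\mu(E)\gtrsim C_0^{-1}$, which only affects the implicit constants in $\delta=\delta(A,C_0,M)$.
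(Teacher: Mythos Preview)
Your proposal is correct and follows essentially the same approach as the paper: argue by contradiction, use the aperture slack between $0.5J$ and $0.9J$ to place a ball $B(y,c\HH(J)|x-y|)$ inside the $0.9J$-cone of every $x'$ near $x$, invoke \lemref{lem:filling gaps} to transfer this mass to the $G$-cone, and then choose an ancestor cube $S$ (the paper calls it $P$) at scale $\ELL(S)\sim_A |x-y|$ to conclude $\E_G(S)\gtrsim_{A,C_0,M}\HH(J)$, contradicting the $\BCE$ stopping condition. The only cosmetic difference is that the paper selects $P$ directly via the condition $A^{-1}\ELL(P)\le|x-y|\le A^2\ELL(P)$ rather than separating out the boundary regimes; both arguments exploit the same wide radial window $[A^{-1}\ELL,A^3\ELL]$ in the definition \eqref{eq:energydef}.
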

	\begin{proof}
		Assume the contrary: let $Q\in\Tree(R)\setminus\BCE(R),\ x\in A\cR_Q\cap E$, and $y\in X(x,0.5J,A^{-1}\ELL(Q), A^2\ELL(R))\cap E$. 
		
		Let $P\in\Tree(R)\setminus\BCE(R)$ be such that $Q\subset P$ and $y\in X(x,0.5J,A^{-1}\ELL(P), A^2\ELL(P))$, so that in particular
		\begin{equation*}
		A^{-1}\ELL(P)\le|x-y|\le A^2\ELL(P).
		\end{equation*}
		
		
		Set	
		\begin{equation}\label{eq:r0}
		r_0 \coloneqq A^{-2} \ell(P) =  A^{-2}\HH(J)\ELL(P)\le A^{-1}\HH(J)|x-y|.
		\end{equation}
		We claim that if $A$ is chosen big enough, then for all $x'\in B(x,r_0)$ we have 
		\begin{equation}\label{eq:ballincone}
		B(y,r_0)\subset X(x',0.9J, 2A^2\ELL(P)).
		\end{equation}
		This is a simple geometric observation, see Figure \ref{fig:1}. The rigorous computation goes as follows: first, observe that if $x'\in B(x,r_0),\ y'\in B(y,r_0)$, then
		\begin{equation*}
		|x'-y'|\ge |x-y|-2 r_0\overset{\eqref{eq:r0}}{\ge} A\HH(J)^{-1}r_0 - 2r_0\ge \frac{{A}}{2\HH(J)}r_0.
		\end{equation*}
		Thus, using the fact that $y\in X(x,0.5J)$,
		\begin{multline*}
		|\pi_0(x')-\pi_0(y')|\le |\pi_0(x)-\pi_0(y)|+2r_0\overset{\eqref{eq:cone algebraic}}{\le}\sin\bigg(\frac{\HH(J)}{2}\pi\bigg)|x-y|+2r_0\\
		\le \sin\bigg(\frac{\HH(J)}{2}\pi\bigg)|x'-y'|+4r_0 \le \bigg(\sin\bigg(\frac{\HH(J)}{2}\pi\bigg) + \frac{8\HH(J)}{A}\bigg)|x'-y'|\\
		\le \sin\big(0.9\HH(J)\pi\big)|x'-y'|,
		\end{multline*}
		assuming $A$ large enough. This shows $y'\in X(x',0.9J)$. We also have $y'\in B(x', 2A^2\ELL(P))$ because 
		\begin{equation*}
		|x'-y'|\le |x-y|+2r_0\le A^2\ELL(P)+ 2A^{-2}\ell(P)\le 2 A^2\ELL(P).
		\end{equation*}
		This gives the claim \eqref{eq:ballincone}.
		
		\begin{figure}
			\def\svgwidth{6cm}
\begingroup%
  \makeatletter%
  \providecommand\color[2][]{%
    \errmessage{(Inkscape) Color is used for the text in Inkscape, but the package 'color.sty' is not loaded}%
    \renewcommand\color[2][]{}%
  }%
  \providecommand\transparent[1]{%
    \errmessage{(Inkscape) Transparency is used (non-zero) for the text in Inkscape, but the package 'transparent.sty' is not loaded}%
    \renewcommand\transparent[1]{}%
  }%
  \providecommand\rotatebox[2]{#2}%
  \newcommand*\fsize{\dimexpr\f@size pt\relax}%
  \newcommand*\lineheight[1]{\fontsize{\fsize}{#1\fsize}\selectfont}%
  \ifx\svgwidth\undefined%
    \setlength{\unitlength}{242.61502585bp}%
    \ifx\svgscale\undefined%
      \relax%
    \else%
      \setlength{\unitlength}{\unitlength * \real{\svgscale}}%
    \fi%
  \else%
    \setlength{\unitlength}{\svgwidth}%
  \fi%
  \global\let\svgwidth\undefined%
  \global\let\svgscale\undefined%
  \makeatother%
  \begin{picture}(1,1.54662345)%
    \lineheight{1}%
    \setlength\tabcolsep{0pt}%
    \put(0,0){\includegraphics[width=\unitlength,page=1]{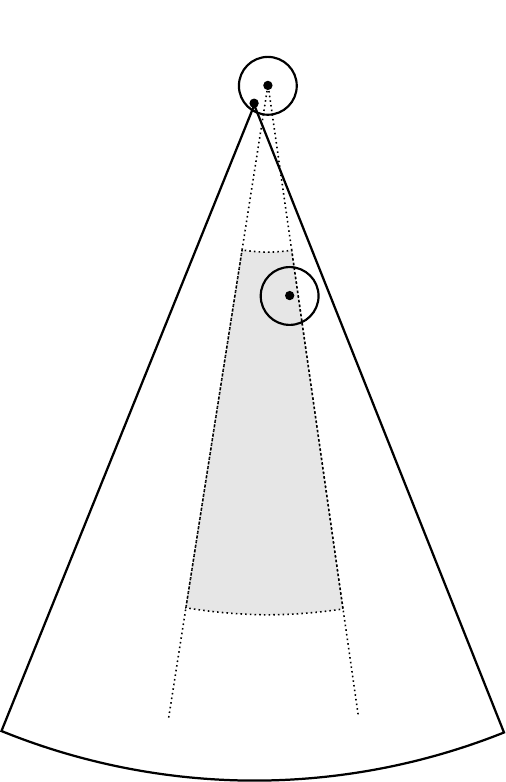}}%
    \put(0.6021402,1.38573019){\color[rgb]{0,0,0}\makebox(0,0)[lt]{\lineheight{1.25}\smash{\begin{tabular}[t]{l}$B(x,r_0)$\end{tabular}}}}%
    \put(0.708312,1.03130362){\color[rgb]{0,0,0}\makebox(0,0)[lt]{\lineheight{1.25}\smash{\begin{tabular}[t]{l}$B(y,r_0)$\end{tabular}}}}%
    \put(0,0){\includegraphics[width=\unitlength,page=2]{balls-cones.pdf}}%
    \put(0.28674719,1.38885294){\color[rgb]{0,0,0}\makebox(0,0)[lt]{\lineheight{1.25}\smash{\begin{tabular}[t]{l}$x'$\end{tabular}}}}%
    \put(0,0){\includegraphics[width=\unitlength,page=3]{balls-cones.pdf}}%
    \put(0.335949,1.50904463){\color[rgb]{0,0,0}\makebox(0,0)[lt]{\lineheight{1.25}\smash{\begin{tabular}[t]{l}$x$\end{tabular}}}}%
    \put(0,0){\includegraphics[width=\unitlength,page=4]{balls-cones.pdf}}%
  \end{picture}%
\endgroup%

			\caption{We have $B(y,r_0)\subset X(x',0.9J, 2A^2\ELL(P))$.}\label{fig:1}
		\end{figure}
		

		Since $x\in A\cR_P$ and $B(x,r_0)\subset 2A\cR_P$, we get from \lemref{lem:filling gaps} that
		\begin{align*}
		\E_G(P)\mu(P) &= \int_{2A\cR_P}\int_{A^{-1}\ELL(P)}^{A^3\ELL(P)} \frac{\mu(X(x',G,r))}{r}\, \frac{dr}{r}d\mu(x')\\
	&\overset{\eqref{eq:filling gaps}}{\gtrsim}_{C_0}\int_{2A\cR_P}\int_{2A^2\ELL(P)}^{4A^2\ELL(P)} \frac{\mu(X(x',0.9J,r))}{r}\, \frac{dr}{r}d\mu(x')\\
	&\ge \int_{B(x,r_0)}\int_{2A^2\ELL(P)}^{4A^2\ELL(P)} \frac{\mu(X(x',0.9J,r))}{r}\, \frac{dr}{r}d\mu(x')\\	
	&\gtrsim_A \int_{B(x,r_0)}\int_{2A^2\ELL(P)}^{4A^2\ELL(P)} \frac{\mu(B(y,r_0))}{\ELL(P)}\, \frac{dr}{r}d\mu(x')\\
	&\ge \frac{\mu(B(x,r_0))\mu(B(y,r_0))}{\ELL(P)} \gtrsim \frac{C_0^{-2}r_0^2}{\ELL(P)}
		\sim_{C_0,A} \frac{\ell(P)^2}{\ELL(P)} = \HH(J)\ell(P). 
		\end{align*}
		Hence,
		\begin{equation*}
		\E_G(P) \gtrsim_{C_0,A} \HH(J)\frac{\ell(P)}{\mu(P)}\gtrsim_M \HH(J).
		\end{equation*}
		Recall that $\E_G(P)\le\delta\HH(J)$ because $P\notin\BCE(R)$ (see the $\BCE$ stopping condition \eqref{eq:BCE def}). Assuming $\delta=\delta(A,M,C_0)$ small enough, we arrive at a contradiction.
	\end{proof}

	For brevity of notation, for $R\in\Top$ we define $\cT(R)=\Tree(R)\setminus\BCE(R)$ and
	\begin{equation*}
	\cT_k(R) = \cT(R)\cap\DD_k.
	\end{equation*}
	In the next two lemmas we show that for any integer $k\in\Z$, the family of intervals 
	\begin{equation*}
	\{\pi_0(\cR_P)\ :\ P\in \cT_k(R)\}
	\end{equation*}
	has bounded overlaps. In other words, if we fix a generation $\DD_k$, then the rectangles associated to cubes in $\cT_k(R)$ resemble a graph over the horizontal line $\ell_0$. This will be useful in Section \ref{sec:exterior energy}. Recall that $\DD_*$ was defined in Subsection \ref{subsec:corona}.

	\begin{lemma}\label{lem:projections disjoint}
	There exists an absolute constant $C>1$ such that the following holds. Suppose that $R\in\DD_*$ and $Q, P\in\DD(R)$ are such that $Q\neq P$, $\ell(Q)=\ell(P),$ and
	\begin{equation}\label{eq:empty cone1}
		X(z,0.5J,\rho\ELL(Q), \ELL(R))\cap E=\varnothing\quad\text{for all $z\in E\cap 2\cR_Q$.}
	\end{equation} 
	If $\pi_0(\cR_Q)\cap\pi_0(\cR_P)\neq\varnothing$, then $\cR_P\subset C\cR_Q$.
\end{lemma}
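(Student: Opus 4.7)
The idea is to control separately the horizontal and the vertical distances between the centers $x_P$ and $x_Q$. The horizontal bound is immediate: since $\pi_0(\cR_Q)=\pi_0(x_Q)+[-\ell(Q)/2,\ell(Q)/2]$ and $\ell(P)=\ell(Q)$, the overlap hypothesis $\pi_0(\cR_Q)\cap\pi_0(\cR_P)\neq\varnothing$ gives $|\pi_0(x_P)-\pi_0(x_Q)|\le \ell(Q)$. The main task is therefore to show that the vertical distance between $x_P$ and $x_Q$ is $\lesssim \ELL(Q)$. Once this is established, choosing $C$ larger than $3$ and twice the implied constant yields both the horizontal and vertical inclusions in $\cR_P\subset C\cR_Q$.

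I will prove the vertical bound by contradiction. Suppose the vertical displacement between $x_P$ and $x_Q$ exceeds $K\ELL(Q)$ for a sufficiently large absolute constant $K$. Since the horizontal displacement is at most $\ell(Q)=\HH(J)\ELL(Q)$, the Euclidean length satisfies $|x_P-x_Q|\ge K\ELL(Q)$, and
\[
\frac{|\pi_0(x_P)-\pi_0(x_Q)|}{|x_P-x_Q|}\le \frac{\HH(J)}{K}.
\]
For $K$ large enough, this ratio is strictly less than $\sin(\pi\HH(J)/2)$, so by the algebraic characterization \eqref{eq:cone algebraic} the point $x_P$ lies in the cone $X(x_Q,0.5J)$. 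Additionally $|x_P-x_Q|>K\ELL(Q)>\rho\ELL(Q)$, so $x_P\notin B(x_Q,\rho\ELL(Q))$. Since $x_P,x_Q\in R\subset\cR_R$, we also have $|x_P-x_Q|\lesssim \ELL(R)$. Therefore $x_P$ lies in the truncated cone $X(x_Q,0.5J,\rho\ELL(Q),\ELL(R))$, which by hypothesis (applied at $x=x_Q\in E\cap Q\subset E\cap 2\cR_Q$) does not meet $E$. This contradicts $x_P\in P\subset E$.

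The one technical subtlety is that the bound from $x_P,x_Q\in R$ only gives $|x_P-x_Q|\le\sqrt{\ELL(R)^2+\ell(R)^2}=\ELL(R)\sqrt{1+\HH(J)^2}$, which may marginally exceed the upper truncation $\ELL(R)$. This is harmless in the intended application: \lemref{lem:projections disjoint} will be invoked together with \lemref{lem:empty cones}, which supplies emptiness in the much wider annulus $(A^{-1}\ELL(Q),A^2\ELL(R))$ with $A\gg 1$, comfortably accommodating the $\sqrt{1+\HH(J)^2}$ factor. The resulting $C$ is an absolute constant (essentially $C=2K+3$), confirming $\cR_P\subset C\cR_Q$.
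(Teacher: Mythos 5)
Your argument is correct and follows essentially the same route as the paper's proof: the horizontal bound comes trivially from the overlap of the projected intervals, and the vertical bound is obtained by contradiction, placing one point inside the truncated empty cone of the other via the algebraic characterization \eqref{eq:cone algebraic} (the paper runs this with arbitrary points $y_Q\in Q$, $y_P\in P$ rather than the centers, which changes nothing of substance). The upper-truncation issue you flag (that $|x_P-x_Q|$ could marginally exceed $\ELL(R)$) is real but is in fact also glossed over in the paper's own write-up, and your observation that the application through \lemref{lem:empty cones} supplies the wider annulus $(A^{-1}\ELL(Q),A^2\ELL(R))$ disposes of it correctly.
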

Note that since $\rho=1/1000$ is much larger than $A^{-1}=A(C_0,M)^{-1}$, the assumptions above are in particular satisfied for any $Q,P\in\DD_k\cap\Tree(R)\setminus\BCE(R)$ by \lemref{lem:empty cones}.
\begin{proof}
	Let $y_Q \in Q, y_P\in P,$ and suppose there exists $z_Q\in \cR_Q$ and $z_P\in \cR_P$ such that $\pi_0(z_Q)=\pi_0(z_P)$. 
	Then, we have
	\begin{multline*}
		|\pi_0(y_Q)-\pi_0(y_P)| = |\pi_0(y_Q-z_Q)-\pi_0(y_P-z_P) - \pi_0(z_P-z_Q)| \\
		\le |\pi_0(y_Q-z_Q)| + |\pi_0(y_P-z_P)| + |\pi_0(z_P-z_Q)|\le \ell(Q) + \ell(P) + 0 = 2\ell(Q).
	\end{multline*}
	We claim that $|\pi_0^\perp(y_Q) - \pi_0^\perp(y_P)|\le C'\ELL(Q)$ for some big absolute $C'>1$. Indeed, if that was not the case, then the previous computation gives
	\begin{equation*}
	|\pi_0(y_Q)-\pi_0(y_P)| \le 2\ell(Q) = 2\HH(J)\ELL(Q) \le \frac{2\HH(J)}{C'}|y_Q-y_P|.
	\end{equation*}
	Taking $C'>1$ large enough, we arrive at
	\begin{equation*}
		y_P\in X(y_Q,0.5J,\rho\ELL(Q), \ELL(R)),
	\end{equation*}
	which is a contradiction with \eqref{eq:empty cone1}. Hence, $|\pi_0^\perp(y_Q) - \pi_0^\perp(y_P)|\le C'\ELL(Q)$.
	
	Recall that $x_Q$ is the center of $\cR_Q$. It follows easily from the estimates above that for any $x\in \cR_P$
	\begin{multline*}
	|\pi_0(x)-\pi_0(x_Q)|\le |\pi_0(x)-\pi_0(y_P)| + |\pi_0(y_P)-\pi_0(y_Q)| + |\pi_0(y_Q)-\pi_0(x_Q)|\\
	\le \ell(P) + 2\ell(Q) + \ell(Q) = 4\ell(Q),
	\end{multline*}
	and
	\begin{multline*}
	|\pi_0^\perp(x) - \pi_0^\perp(x_Q)|\le |\pi_0^\perp(x) - \pi_0^\perp(y_P)|+|\pi_0^\perp(y_P) - \pi_0^\perp(y_Q)|+|\pi_0^\perp(y_Q) - \pi_0^\perp(x_Q)|\\
	\le \ELL(P) + C'\ELL(Q)+\ELL(Q)\lesssim \ELL(Q).	
	\end{multline*}
	Thus, $\cR_P\subset C\cR_Q$ for some absolute $C>1$.
\end{proof}

	Recall that that for $Q\in\DD_k$ we have $\ell(Q)=4\rho^k$.
	\begin{lemma}
		Let $R\in\Top$ and $k\ge 0$. Then, the family of intervals $\{\pi_0(\cR_P)\}_{P\in \cT_k(R)}$ has bounded overlaps, i.e.
		\begin{equation}\label{eq:bounded-intersection}
			\sum_{P\in \cT_k(R)} \one_{\pi_0(\cR_P)}(x)\lesssim 1\quad\text{for all $x\in\R$}.
		\end{equation}
		In particular, for any interval $K\subset \R$ we have
		\begin{equation}\label{eq:bounded number of intervals}
			\#\big\{P\in \cT_k(R)\ :\ \pi_0(\cR_P)\subset K\big\}\lesssim \frac{\HH(K)}{\rho^k}.
		\end{equation}
	\end{lemma}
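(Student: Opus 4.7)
The plan is to combine \lemref{lem:empty cones} with \lemref{lem:projections disjoint} to trap all rectangles $\cR_P$ whose horizontal projection contains a given point into a single bounded enlargement of one fixed rectangle, and then to use disjointness of the cubes in $\DD_k$ together with AD-regularity to count them. The second estimate \eqref{eq:bounded number of intervals} will follow by integrating the pointwise bound over $K$.

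For \eqref{eq:bounded-intersection}, I would fix $x\in\R$ and set $\cP_x\coloneqq\{P\in\cT_k(R):x\in\pi_0(\cR_P)\}$. Assuming $\cP_x\neq\varnothing$, pick any $P_0\in\cP_x$. Since $P_0\in\Tree(R)\setminus\BCE(R)$, \lemref{lem:empty cones} gives
$$X(z,0.5J,A^{-1}\ELL(P_0),A^2\ELL(R))\cap E=\varnothing\quad\text{for every }z\in A\cR_{P_0}\cap E.$$
Because $A\ge 1000$ one has $A^{-1}\le\rho$ and $A^2\ge 1$, so in particular hypothesis \eqref{eq:empty cone1} of \lemref{lem:projections disjoint} is satisfied for $Q=P_0$ (with $2\cR_{P_0}\subset A\cR_{P_0}$). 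Every other $P\in\cP_x$ has the same sidelength $\ell(P)=\ell(P_0)=4\rho^k$, and by construction $\pi_0(\cR_P)\cap\pi_0(\cR_{P_0})\ni x$. \lemref{lem:projections disjoint} therefore forces $\cR_P\subset C\cR_{P_0}$ for an absolute $C>1$, so every $P\in\cP_x$ is contained in $C\cR_{P_0}\cap E$.

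Now I use the fact that the cubes in $\DD_k$ are pairwise disjoint and that AD-regularity (via \eqref{eq:ADR cubes}) yields both $\mu(P)\gtrsim_{C_0}\ell(P)=4\rho^k$ for each $P\in\cP_x$ and $\mu(C\cR_{P_0})\lesssim_{M}\rho^k$. Summing gives
$$\#\cP_x\cdot\rho^k\;\lesssim_{C_0}\;\sum_{P\in\cP_x}\mu(P)\;\le\;\mu(C\cR_{P_0}\cap E)\;\lesssim_{M}\;\rho^k,$$
hence $\#\cP_x\lesssim_{C_0,M}1$, which is the pointwise overlap bound \eqref{eq:bounded-intersection} (reading the bare $\lesssim$ in the statement as allowing $C_0,M$ dependence, as is done elsewhere in the section). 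For \eqref{eq:bounded number of intervals}, since each $\pi_0(\cR_P)$ is an interval of length $4\rho^k$, integrating the overlap bound over $K$ gives
$$4\rho^k\cdot\#\{P\in\cT_k(R):\pi_0(\cR_P)\subset K\}=\int_K\sum_{P:\pi_0(\cR_P)\subset K}\one_{\pi_0(\cR_P)}(y)\,dy\;\lesssim\;\HH(K),$$
and dividing by $\rho^k$ yields the claim.

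There is essentially no serious obstacle left at this stage: the geometric content has already been absorbed into \lemref{lem:projections disjoint} (graph-like alignment of the $\cR_P$ coming from the empty-cone condition) and into \lemref{lem:empty cones} (provision of empty cones from the $\BCE$ stopping rule). The only thing to verify carefully is the scale matching $A^{-1}\le\rho$ and $A^2\ge 1$ that makes Lemma~\ref{lem:empty cones} strong enough to feed into Lemma~\ref{lem:projections disjoint}; after that, the argument reduces to a routine AD-regular packing estimate.
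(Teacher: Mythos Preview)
Your proof is correct and follows essentially the same route as the paper: use \lemref{lem:empty cones} to verify the empty-cone hypothesis, apply \lemref{lem:projections disjoint} to trap all overlapping $\cR_P$ inside $C\cR_{P_0}$, then count and integrate. The only cosmetic difference is in the counting step: you bound $\#\cP_x$ via $\mu$-measure (yielding a $C_0,M$-dependent constant), whereas the paper simply asserts $\#\{P\in\DD_k:\cR_P\subset C\cR_Q\}\lesssim_C 1$, which can be justified by the pairwise disjointness of the rectangles $\cR(P)=0.1\cR_P$ inside $C\cR_Q$ via a Lebesgue-area argument, giving an absolute constant.
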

	\begin{proof}
		Fix $Q\in\cT_k(R)$. Suppose that $P\in\cT_k(R)$ satisfies $\pi_0(\cR_Q)\cap\pi_0(\cR_P)\neq\varnothing$. We know from \lemref{lem:empty cones} that $Q$ and $P$ satisfy \eqref{eq:empty cone1}, and so it follows \lemref{lem:projections disjoint} that $\cR_P\subset C\cR_Q$. It remains to observe that
		\begin{equation*}
		\#\{ P\in\cT(R)\cap\DD_k \ :\ \cR_P\subset C\cR_Q \}\lesssim_C 1.
		\end{equation*}
		This gives \eqref{eq:bounded-intersection}.
		
		To see \eqref{eq:bounded number of intervals}, we compute
		\begin{multline*}
		\#\big\{P\in \cT_k(R)\ :\ \pi_0(\cR_P)\subset K\big\} \le \sum_{P\in \cT_k(R)} \frac{1}{\rho^k}\int_K \one_{\pi_0(\cR_P)}(x)\, dx\\
		= \frac{1}{\rho^k}\int_K \sum_{P\in \cT_k(R)}\one_{\pi_0(\cR_P)}(x)\, dx \overset{\eqref{eq:bounded-intersection}}{\lesssim}\frac{\HH(K)}{\rho^k}.
		\end{multline*}
	\end{proof}

	\section{Estimating exterior energy}\label{sec:exterior energy}
	Recall that
	\begin{equation*}
	\widetilde{\E}_J^{ext}(Q) =\frac{1}{\mu(Q)} \int_{Q} \frac{\mu(X(x,3J\setminus 0.5J,\rho\ELL(Q), \ELL(Q)))}{\ELL(Q)}\, d\mu(x).
	\end{equation*}
	Our goal is to prove the following.
	\begin{lemma}\label{lem:exterior energy estimate}
		If $A=A(C_0,M)$ is chosen large enough, then for any $R\in\Top$ we have
		\begin{equation*}
		\sum_{Q\in\Tree(R)} \EE^{ext}(Q)\mu(Q) \lesssim_{C_0,M} \HH(J)\mu(R).
		\end{equation*}
	\end{lemma}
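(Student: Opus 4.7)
The plan is to split $\Tree(R) = \BCE(R) \sqcup \cT(R)$ and handle the two pieces separately. For $Q \in \BCE(R)$ the trivial bound $\EE^{ext}(Q) \lesssim_M \HH(J)$ is immediate: the truncated cone $X(x, 3J\setminus 0.5J, \rho\ELL(Q), \ELL(Q))$ lies inside the rectangle $\cR(x, C\ell(Q))$ for an absolute $C$, so \eqref{eq:ADRrectangles} gives $\mu$-mass $\lesssim M\ell(Q)$, and dividing by $\ELL(Q) = \HH(J)^{-1}\ell(Q)$ produces $M\HH(J)$. Combined with the $\BCE$-packing $\sum_{Q\in\BCE(R)}\mu(Q) \le \mu(R)$, this handles this portion.

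For $Q \in \cT(R) = \Tree(R)\setminus\BCE(R)$, the first observation is that Lemma \ref{lem:empty cones} (with $A$ chosen large enough that $A^{-1} \le \rho$) forces the inner cone $X(x, 0.5J, \rho\ELL(Q), \ELL(Q))$ to be empty of $E$ whenever $x \in Q$. Thus the exterior cone equals the full truncated cone $X(x, 3J, \rho\ELL(Q), \ELL(Q))$, and I may invoke the key geometric lemma: for $x \in Q \in \cT_k(R)$ whose exterior cone contains any point of $E$, the projection $\pi_0(x)$ must lie within a bounded multiple of some ``gap'' $K$ of size $\HH(K) \sim \ell(Q)$ in the horizontal projection of $E \cap 2A\cR_R$. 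The rectangle bound \eqref{eq:ADRrectangles} (applicable since $\theta_0 \in 3J$ is close enough to the vertical direction $1/4$) then yields both the pointwise cone estimate $\mu(X(x,3J,\rho\ELL(Q),\ELL(Q))) \lesssim M\ell(Q)$ and a horizontal-strip estimate $\mu(\pi_0^{-1}(CK)\cap E) \lesssim M\HH(K)$. Combining these gives
\[
\EE^{ext}(Q)\mu(Q) \lesssim_M \HH(J)\sum_{K \in \cG_Q}\HH(K),
\]
where $\cG_Q$ denotes the collection of gaps of size $\sim \ell(Q)$ whose constant-enlargement contains $\pi_0(Q)$.

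To sum over $Q \in \cT(R)$, I use the tree structure together with the bounded-overlap of projected rectangles. The complement $\R \setminus \pi_0(E \cap 2A\cR_R)$ is open inside $\pi_0(2A\cR_R)$ (an interval of length $\sim \ell(R)$), hence a disjoint union of intervals; each gap $K$ has a natural scale $k(K)$ determined by $\HH(K) \sim 4\rho^{k(K)}$, and by \eqref{eq:bounded-intersection} together with Lemma \ref{lem:projections disjoint}, only $O(1)$ cubes $Q \in \cT_{k(K)}(R)$ can charge a fixed gap $K$ in this way. Therefore
\[
\sum_{Q\in\cT(R)} \EE^{ext}(Q)\mu(Q) \lesssim_M \HH(J) \sum_{K} \HH(K) \le \HH(J)\cdot\HH(\pi_0(2A\cR_R)) \lesssim_A \HH(J)\ell(R) \sim \HH(J)\mu(R),
\]
where the last step uses $\mu(R)\sim\ell(R)$ from AD-regularity.

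The main obstacle I anticipate is the precise application of the key geometric lemma in its full (non-simplified) form: the simplified version from Section \ref{sec:sketch} assumes the set consists of horizontal segments, whereas our $E$ only resembles such a structure through the bounded-overlap of the rectangles $\{\cR_P\}_{P \in \cT_k(R)}$ provided by Lemma \ref{lem:projections disjoint}. Bridging this gap, and then consistently matching the natural scales of the gaps produced by the geometric lemma with the dyadic generations of $\cT(R)$, is where the bookkeeping becomes delicate; everything else in the sketch above reduces to straightforward geometry and AD-regular estimates on tall vertical rectangles.
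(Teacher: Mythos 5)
Your proposal is correct and follows essentially the same route as the paper's proof: the trivial bound $\EE^{ext}(Q)\mu(Q)\lesssim M\HH(J)\mu(Q)$ plus disjointness for $\BCE(R)$, and for the remaining cubes the combination of \lemref{lem:empty cones}, the key geometric lemma assigning each bad cube a gap $K\in\cK(R,\ell(Q))$ with $\pi_0(\cR_Q)\subset A^3K$, the bounded count \eqref{eq:bounded number of intervals} of cubes per gap per generation, and the packing $\sum_K\HH(K)\lesssim A\ell(R)\lesssim_{C_0} A\mu(R)$. The only cosmetic difference is that you phrase the per-cube mass bound as a ``horizontal-strip estimate'' (which as stated for a full vertical strip would fail, but the instance you actually need, $\mu(Q)\le\mu(\cR_Q)\lesssim M\ell(Q)\lesssim_A M\HH(K)$, is exactly \eqref{eq:ADRrectangles} and is what the paper uses).
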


	This estimate will follow from the key geometric lemma below. In order to state it, we introduce some notation.
	
	\begin{definition}
		For $R\in\DD_*$ we define $U(R)\subset\R$ as
		\begin{align*}
		U(R)&\coloneqq \pi_0(A\cR_R)\setminus\pi_0(A\cR_R\cap E)\\
		&= \big[\pi_{0}(x_R)-A\ell(R)/2,\ \pi_{0}(x_R)+A\ell(R)/2\big] \setminus\pi_0(A\cR_R\cap E).
		\end{align*}
		Denote by $\cK(R)$ the family of connected components of $U(R)$. Since $E$ is closed, the elements of $\cK(R)$ are intervals. We will call them \emph{gaps in $\pi_{0}(A\cR_R\cap E)$}.
	\end{definition}
	Since the gaps are disjoint, and they have positive length, we get that $\cK(R)$ is at most countable, and also
	\begin{equation}\label{eq:gap lengths}
	\sum_{K\in\cK(R)}\HH(K)\le \HH(U(R))\le \HH(\pi_0(A\cR_R)) = A\ell(R).
	\end{equation}
	Given $0<r<\ell(R)$ we define the collection of gaps with length comparable to $r$ as
	\begin{equation*}
		\cK(R,r) = \{K\in\cK(R)\ :\ A^{-1}r\le\HH(K)\le Ar \}.
	\end{equation*}
	
	\begin{definition}
		For $R\in\DD_*$, we define the family $\Bad(R)\subset\DD(R)$ as the family of cubes $Q\in\DD(R)$ for which there exists $x\in Q$ such that
		\begin{equation*}
		X(x,3J\setminus 0.5J, \rho\ELL(Q),\ELL(Q))\cap E\neq\varnothing.
		\end{equation*}
	\end{definition}
	Observe that if $Q\notin\Bad(R)$, then
	\begin{equation*}
	\widetilde{\E}_J^{ext}(Q) =\frac{1}{\mu(Q)} \int_{Q} \frac{\mu(X(x,3J\setminus 0.5J,\rho\ELL(Q), \ELL(Q)))}{\ELL(Q)}\, d\mu(x) = 0.
	\end{equation*}

	The following is the key geometric lemma of this article.
	\begin{lemma}\label{lem:key geometric lemma}
		If $A=A(C_0,M)$ is chosen large enough, then the following holds. Suppose that $R\in\DD_*$ and $Q\in\DD(R)$ are such that
		\begin{equation}\label{eq:empty cones2}
			X(z,0.5J,A^{-1}\ELL(Q), A^2\ELL(R))\cap E=\varnothing\quad\text{for all $z\in A\cR_Q\cap E$.}
		\end{equation}
		If $Q\in\Bad(R)$, then there is a gap $K\in\cK(R,\ell(Q))$ such that 
		\begin{equation*}
			\pi_0(\cR_Q)\subset A^3K.
		\end{equation*}
	\end{lemma}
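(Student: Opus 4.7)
My plan is to explicitly locate a single connected component $K$ of $U(R)=\pi_0(A\cR_R)\setminus\pi_0(A\cR_R\cap E)$ that lies close to $\pi_0(x_Q)$ and has length comparable to $\ell(Q)$. First, I would unpack the hypothesis $Q\in\Bad(R)$: there exist $x\in Q$ and $y\in E$ with $y\in X(x,3J\setminus 0.5J,\rho\ELL(Q),\ELL(Q))$. Since $|x-y|\le\ELL(Q)$ and $x\in Q\subset\cR_Q$, provided $A$ is large enough we have $y\in A\cR_Q\cap E$, so the empty cone condition \eqref{eq:empty cones2} applies at $y$ as well as at $x$. Elementary trigonometry using \eqref{eq:cone algebraic} then gives $|\pi_0(x)-\pi_0(y)|\in[c\rho\ell(Q),C\ell(Q)]$ for absolute constants $c,C>0$, while $|\pi_0^\perp(y)-\pi_0^\perp(x)|\sim\ELL(Q)$. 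Thus $\pi_0(x)$ and $\pi_0(y)$ are two points of $\pi_0(A\cR_R\cap E)$ horizontally separated by a distance $b$ of order $\ell(Q)$ and vertically separated by $h$ of order $\ELL(Q)$.

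Normalising so that $x=(0,0)$ and $y=(b,h)$ with $b,h>0$, the plan is to exhibit a sub-interval $K_0\subset(0,b)$ with $\pi_0^{-1}(K_0)\cap A\cR_R\cap E=\varnothing$ of length $\gtrsim b\gtrsim\rho\ell(Q)$, and then take $K$ to be the connected component of $U(R)$ containing $K_0$. The construction combines the two empty cones: at any height $t$ with $A^{-1}\ELL(Q)\le|t|\le A^2\ELL(R)$, the empty cone at $x$ forbids $E$-points $(s,t)$ with $|s|\le\sin(\pi\HH(J)/2)|t|$, and symmetrically the empty cone at $y$ forbids $E$-points $(s,t)$ with $|s-b|\le\sin(\pi\HH(J)/2)|t-h|$. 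At heights close to $0$ the cone at $y$ prohibits a horizontal strip of width $\sim\ell(Q)$ containing a neighbourhood of $b$, and symmetrically near $h$ the cone at $x$ prohibits a similar strip near $0$. For intermediate heights, where neither of these two cones alone covers a target sub-interval of $(0,b)$, one invokes empty cones at additional $E$-points available inside $A\cR_Q$ through AD-regularity of $\mu$ and the vertical-rectangle mass bound \lemref{lem:ADRrectangles}, iterated to sweep across the full vertical range of $A\cR_R$.

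Having produced such a $K_0$, one reads off the two size bounds on $K\supset K_0$: the lower bound $\HH(K)\ge\HH(K_0)\gtrsim\rho\ell(Q)\ge A^{-1}\ell(Q)$ once $A$ is chosen large compared to $\rho^{-1}$, and the upper bound $\HH(K)\le|\pi_0(x)-\pi_0(y)|+O(\ell(Q))\lesssim\ell(Q)\le A\ell(Q)$, since $\pi_0(x),\pi_0(y)\in\pi_0(A\cR_R\cap E)$ flank $K$ up to an $O(\ell(Q))$ slack absorbed into constants. For the inclusion $\pi_0(\cR_Q)\subset A^3K$: because $x\in Q\subset\cR_Q$, we have $\pi_0(x)\in\pi_0(\cR_Q)$ and $K$ lies within distance $O(\ell(Q))$ of $\pi_0(x)$, so the concentric enlargement $A^3K$ has length $\ge A^2\ell(Q)\gg\ell(Q)$ and engulfs $\pi_0(\cR_Q)$.

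The hard part is the intermediate-heights step: at heights far from both $0$ and $h$, the cones from $x$ and $y$ alone do not cover a common horizontal sub-interval of $(0,b)$ that can serve as $K_0$, and one must locate auxiliary $E$-points in $A\cR_Q$ whose own empty cones close the remaining gaps in height. Carrying this out cleanly likely parallels the structure of the horizontal-segments simplification described in Section~\ref{sec:sketch}: reduce to analysing ``slices'' of $E$ in thin horizontal bands (where, locally, $E$ is essentially one-dimensional) using \lemref{lem:ADRrectangles}, and recombine the slice-wise information into a single gap of the required size. This bookkeeping, together with the verification that all auxiliary points remain inside $A\cR_Q$ so that \eqref{eq:empty cones2} is applicable, is the main technical obstacle.
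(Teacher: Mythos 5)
Your setup, your identification of the target (an empty vertical strip over a subinterval of $(\pi_0(x),\pi_0(y))$ spanning the full height of $A\cR_R$), and your endgame (the size bounds on $K$ and the inclusion $\pi_0(\cR_Q)\subset A^3K$) all match the paper's proof. But the step you defer as ``the main technical obstacle'' is the entire content of the lemma, and the mechanism you sketch for it does not work as described. The union of the two empty cones at $x$ and at $y$ need not cover any common vertical strip: at heights near $\pi_0^\perp(y)$ the cone at $x$ clears only a neighbourhood of $\pi_0(x)$ of half-width about $\sin(0.5\pi\HH(J))\,|x-y|$, while at heights near $\pi_0^\perp(x)$ the cone at $y$ clears only a neighbourhood of $\pi_0(y)$ of the same half-width; since $|\pi_0(x)-\pi_0(y)|$ may be as large as $\sin(3\pi\HH(J))\,|x-y|$ (the point $y$ can lie near the outer edge of $3J$), these two cleared intervals can be disjoint. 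So there is no single candidate $K_0$ on which the cones at $x$ and $y$ cooperate. Nor does invoking ``auxiliary'' $E$-points obviously close the gap: every auxiliary empty cone carries its own blind band of height $\sim A^{-1}\ELL(Q)$ around its apex in which it clears nothing, so an unstructured iteration of this kind never terminates.

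The paper's resolution is a specific pigeonhole device that your outline is missing. One slices the rectangle $\cG$ between $x$ and $y$ into $2N+1$ horizontal bands $\cG_i$ with $N\sim MC_0$, and shows that some band is ``leftist'': its leftmost $E$-point $z_i$ is also leftmost in $\cG_{i-1}\cup\cG_i\cup\cG_{i+1}$. Existence follows by playing the AD-regularity lower bound $\mu(3\cG_j)\gtrsim C_0^{-1}\ell(\cG)$ against the $L^\infty$-projection upper bound $\mu(3\cG)\lesssim M\ell(\cG)$ of Lemma \ref{lem:ADRrectangles}. The blind band of the empty cone at $z_i$ is then covered not by another cone but by the genuine absence of $E$ to the left of $z_i$ inside those three bands (the rectangle $\cB$), and the empty cone at $x$ forces $\pi_0(z_i)-\pi_0(x)\sim\ell(Q)$, so $z_i$ sits strictly between $x$ and $y$. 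The resulting empty strip has width only $A^{-1}\ell(Q)$, not $\gtrsim\ell(Q)$ as you aim for, but that is all the definition of $\cK(R,\ell(Q))$ requires. Without an existence argument of this type for a usable anchor point, the proof is incomplete at its core; the peripheral steps you do carry out (placing $y$ in $A\cR_Q$, the bounds on $|\pi_0(x)-\pi_0(y)|$, and the final deduction of $\pi_0(\cR_Q)\subset A^3K$) are correct and agree with the paper.
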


	We defer the proof to the next section. Let us show how \lemref{lem:exterior energy estimate} follows from \lemref{lem:key geometric lemma}.
	\begin{proof}[Proof of \lemref{lem:exterior energy estimate}]
		Let $R\in\Top$. Our goal is to prove
		\begin{equation*}
		\sum_{Q\in\Tree(R)} \EE^{ext}(Q)\mu(Q) \lesssim_{C_0,M} \HH(J)\mu(R).
		\end{equation*}
		Recall that $\cT(R) = \Tree(R)\setminus\BCE(R),\ \cT_k(R)=\cT(R)\cap\DD_k$ If $Q\notin\Bad(R)$, then $\EE^{ext}(Q)=0$ trivially, and so it suffices to show
		\begin{equation}\label{eq:goal2}
		\sum_{Q\in\cT(R)\cap\Bad(R)} \EE^{ext}(Q)\mu(Q) +  \sum_{Q\in\BCE(R)} \EE^{ext}(Q)\mu(Q) \lesssim_{C_0,M} \HH(J)\mu(R).
		\end{equation} 
		
		Observe that for any $x\in E$ we have
		\begin{equation*}
		\mu(X(x,3J\setminus 0.5J,\rho\ELL(Q), \ELL(Q)))\le \mu(\cR(x,3\ell(Q)))\overset{\eqref{eq:ADRrectangles}}{\lesssim} M\ell(Q),
		\end{equation*}
		and so for any $Q\in\DD_*$
		\begin{multline*}
		\widetilde{\E}_J^{ext}(Q)\mu(Q)=\int_{Q} \frac{\mu(X(x,3J\setminus 0.5J,\rho\ELL(Q), \ELL(Q)))}{\ELL(Q)}\, d\mu(x)\\
		\lesssim \frac{M\ell(Q)}{\ELL(Q)}\,\mu(Q) = M\HH(J)\mu(Q).
		\end{multline*}
		It follows that
		\begin{multline*}
		\sum_{Q\in\cT(R)\cap\Bad(R)} \EE^{ext}(Q)\mu(Q) +  \sum_{Q\in\BCE(R)} \EE^{ext}(Q)\mu(Q)\\
		\lesssim M\HH(J)\bigg(\sum_{Q\in\cT(R)\cap\Bad(R)} \mu(Q) +  \sum_{Q\in\BCE(R)} \mu(Q)\bigg).
		\end{multline*}
		Thus, to reach \eqref{eq:goal2}, it suffices to show that the two sums on the right hand side above are bounded by $C(C_0,M)\mu(R)$. This is immediate for the second sum:
		\begin{equation*}
		\sum_{Q\in\BCE(R)}\mu(Q)\le\mu(R).
		\end{equation*}
		
		What remains to show is that
		\begin{equation}\label{eq:goal1}
		\sum_{Q\in\cT(R)\cap\Bad(R)} \mu(Q)\lesssim_{C_0,M} \mu(R).
		\end{equation}
		Let $Q\in\cT(R)\cap\Bad(R)\subset\Tree(R)\setminus\BCE(R)$.
		 By \lemref{lem:empty cones}, $R$ and $Q$ satisfy the empty cone assumption \eqref{eq:empty cones2}, and so we may use \lemref{lem:key geometric lemma} to conclude that there is a gap $K\in\cK(R,\ell(Q))$ such that $\pi_0(\cR_Q)\subset A^3K.$ Hence,
		\begin{align*}
			\sum_{Q\in\cT(R)\cap\Bad(R)} \mu(Q)& =\sum_{k\ge 0} \sum_{Q\in\cT_k(R)\cap\Bad(R)} \mu(Q)\\
			&\le \sum_{k\ge 0}\sum_{K\in\cK(R,4\rho^k)}\sum_{\substack{Q\in\cT_k(R),\\ \pi_0(\cR_Q)\subset A^3K}} \mu(Q)\\
			&\lesssim \sum_{k\ge 0}\sum_{K\in\cK(R,4\rho^k)}\sum_{\substack{Q\in\cT_k(R),\\ \pi_0(\cR_Q)\subset A^3K}} M\ell(Q)\\
			&\overset{\eqref{eq:bounded number of intervals}}{\lesssim} \sum_{k\ge 0}\sum_{K\in\cK(R,4\rho^k)} M\rho^k \frac{\HH(A^3K)}{\rho^k}
			\\
			&\sim_{A,M} \sum_{k\ge 0}\sum_{K\in\cK(R,4\rho^k)} \HH(K)
			\sim_{A} \sum_{K\in\cK(R)} \HH(K) \overset{\eqref{eq:gap lengths}}{\lesssim}_A\ell(R).
		\end{align*}
		Since $A=A(C_0,M)$ and $\mu(R)\gtrsim C_0^{-1}\ell(R)$, this gives the desired estimate \eqref{eq:goal1}.
%
	\end{proof}

	\section{Proof of the key geometric lemma}\label{sec:keygeometric} In this section we prove \lemref{lem:key geometric lemma}. 

		\subsection{Preliminaries}
		Suppose that $R\in\DD_*$ and $Q\in\DD(R)$ are as in the assumptions of \lemref{lem:key geometric lemma}, so that they satisfy
		\begin{equation}\label{eq:empty cones}
		X(z,0.5J,A^{-1}\ELL(Q), A^2\ELL(R))\cap E=\varnothing\quad\text{for all $z\in A\cR_Q\cap E$,}
		\end{equation}
		and assume that $Q\in\Bad(R)$, which means that there exists $x\in Q$ such that 
		\begin{equation*}
		X(x,3J\setminus 0.5J,\rho\ELL(Q), \ELL(Q))\cap E\neq\varnothing.
		\end{equation*}
		Let $y\in X(x,3J\setminus 0.5J,\rho\ELL(Q), \ELL(Q))\cap E$. 		
		See Figure \ref{fig:KGL} for an overview of our setup.
		
		The plan is as follows. We want to find a gap $K\in\cK(R,\ell(Q))$ such that 
		\begin{equation*}
		\pi_0(\cR_Q)\subset A^3K.
		\end{equation*}
		To achieve this, we will find a rectangle $\mathcal{Y}$ satisfying $\mathcal{Y}\cap E=\varnothing$ (in our terminology: ``$\cA$ is a good rectangle'') of size roughly $\ell(Q)\times\ELL(R)$, such that $\pi_0^\perp(\mathcal{Y})\supset\pi_0^\perp(A\cR_R)$, and such that $\mathcal{Y}$ lies between $x$ and $y$, in the sense that $\pi_0(x)$ and $\pi_0(y)$ lie on different sides of the interval $\pi_0(\mathcal{Y})$. See the yellow rectangle in Figure \ref{fig:KGL}. The properties above tell us that
		\begin{equation*}
		\pi_0(\mathcal{Y})\cap\pi_0(A\cR_R\cap E)=\varnothing,
		\end{equation*}
		so that $\pi_0(\mathcal{Y})$ is contained in some interval $K\in\cK(R)$. One can also see that $K$ necessarily satisfies $\HH(K)\sim_A \ell(Q)$, so that $K\in\cK(R,\ell(Q))$. This will be our desired gap.

		\begin{remark}
			It is instructive to consider the following hypothetical counterexample to what we are aiming to prove. Suppose that $R=E$ is a segment of length $\sim\ELL(R)$ containing $x$ and $y$. It is easy to see that for every $z\in E$ we have $X(z, 0.5J)\cap E = \varnothing$, which is even better than \eqref{eq:empty cones}. At the same time, the projection $\pi_0(A\mathcal{R}_R\cap E) = \pi_0(R)$ is an interval of length $\sim\ell(R)$, and one cannot hope to find a gap $K$ lying between $\pi_0(x)$ and $\pi_0(y)$. 
			
			This does not contradict Lemma \ref{lem:key geometric lemma} for the following reason. Observe that in this example the projected measure $\pi_{\theta_0}\mathcal{H}^1|_{E}$ is a uniform measure on a segment of length $\sim\ell(R)$ with total mass $\sim\ELL(R)=\HH(J)^{-1}\ell(R)$. Using the upper bound on the length of $\HH(J)$ from assumption (b) in Proposition \ref{prop:main prop}, this gives
			\begin{equation*}
				\|\pi_{\theta_0}\mathcal{H}^1|_{E}\|_{L^\infty}\sim \HH(J)^{-1}\ge c_1^{-1}M
			\end{equation*}
			for some small absolute $c_1$ that we choose in Lemma \ref{lem:ddd} below. Since $c_1^{-1}$ is very large, we get that the set $E$ does not satisfy our underlying assumption $\|\pi_{\theta_0}\mathcal{H}^1|_{E}\|_{L^\infty}\le M$. Thus, Lemma \ref{lem:key geometric lemma} cannot be applied to this set.
		\end{remark}

		\begin{figure}
			\def\svgwidth{4cm}
\begingroup%
  \makeatletter%
  \providecommand\color[2][]{%
    \errmessage{(Inkscape) Color is used for the text in Inkscape, but the package 'color.sty' is not loaded}%
    \renewcommand\color[2][]{}%
  }%
  \providecommand\transparent[1]{%
    \errmessage{(Inkscape) Transparency is used (non-zero) for the text in Inkscape, but the package 'transparent.sty' is not loaded}%
    \renewcommand\transparent[1]{}%
  }%
  \providecommand\rotatebox[2]{#2}%
  \newcommand*\fsize{\dimexpr\f@size pt\relax}%
  \newcommand*\lineheight[1]{\fontsize{\fsize}{#1\fsize}\selectfont}%
  \ifx\svgwidth\undefined%
    \setlength{\unitlength}{174.11039365bp}%
    \ifx\svgscale\undefined%
      \relax%
    \else%
      \setlength{\unitlength}{\unitlength * \real{\svgscale}}%
    \fi%
  \else%
    \setlength{\unitlength}{\svgwidth}%
  \fi%
  \global\let\svgwidth\undefined%
  \global\let\svgscale\undefined%
  \makeatother%
  \begin{picture}(1,3.65503752)%
    \lineheight{1}%
    \setlength\tabcolsep{0pt}%
    \put(0.67952043,0.20573204){\color[rgb]{0,0,0}\makebox(0,0)[lt]{\lineheight{1.25}\smash{\begin{tabular}[t]{l}$A\cR_R$\end{tabular}}}}%
    \put(0,0){\includegraphics[width=\unitlength,page=1]{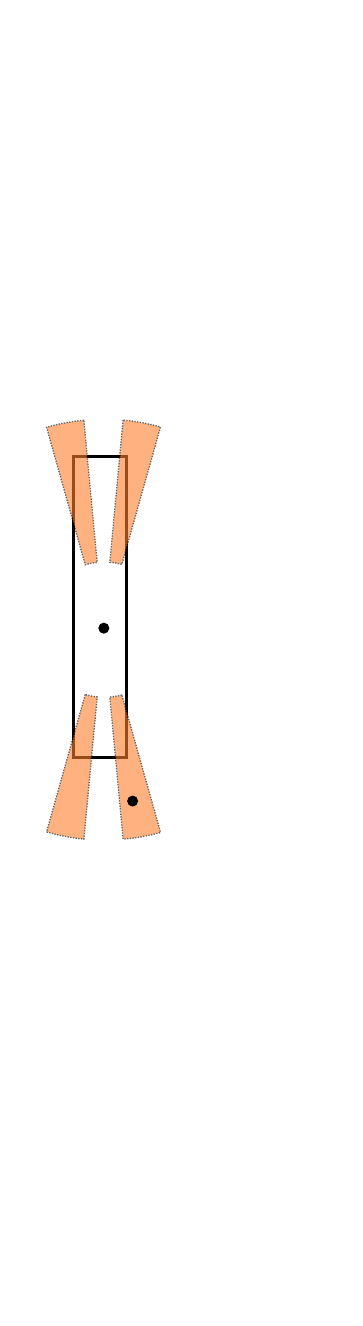}}%
    \put(0.03039063,1.8603204){\color[rgb]{0,0,0}\makebox(0,0)[lt]{\lineheight{1.25}\smash{\begin{tabular}[t]{l}$x$\end{tabular}}}}%
    \put(0.49047236,1.56501366){\color[rgb]{0,0,0}\makebox(0,0)[lt]{\lineheight{1.25}\smash{\begin{tabular}[t]{l}$y$\end{tabular}}}}%
    \put(0,0){\includegraphics[width=\unitlength,page=2]{KGL.pdf}}%
  \end{picture}%
\endgroup%

			\caption{The big white rectangle is $A\cR_R$, the small white rectangle is $\cR_Q$, the orange double-truncated cone is $X(x,3J\setminus 0.5J,\rho\ELL(Q), \ELL(Q))$, the yellow rectangle is the desired good rectangle $\mathcal{Y}$.}\label{fig:KGL}
		\end{figure}
	
		\vspace{1em}
		
		The double truncated cone $X(x,3J\setminus 0.5J,\rho\ELL(Q), \ELL(Q))$ has 4 connected components (see the orange cone in Figure \ref{fig:KGL} or \figref{fig:rectangleP}). Without loss of generality, we may assume that $y$ lies in the lower right connected component, so that $\pi_0(x)<\pi_{0}(y)$ and $\pi_{0}^\perp(x)>\pi_{0}^\perp(y)$ (the proof for other cases is completely analogous). Note that, since $y\in X(x,3J\setminus 0.5J,\rho\ELL(Q), \ELL(Q))$, we have
		\begin{equation*}
		\pi_{0}(y)-\pi_{0}(x)\sim \ell(Q),
		\end{equation*}
		and
		\begin{equation*}
		\pi_{0}^\perp(x)-\pi_{0}^\perp(y)\sim \ELL(Q).
		\end{equation*}
		
			
		\subsection{Finding a leftist rectangle}
		Recall that our desired good rectangle $\mathcal{Y}$ will be of size roughly $\ell(Q)\times\ELL(R)$ and will satisfy $\pi_0^\perp(\mathcal{Y})\supset\pi_0^\perp(A\cR_R)$. Note that any good cone arising from \eqref{eq:empty cones} already \emph{almost} contains a rectangle with these properties, except for a missing $\ell(Q)\times\ELL(Q)$ rectangle close to the center of the cone (see the red cone in Figure \ref{fig:rectangleA}). Our goal is to find an auxiliary good rectangle $\mathcal{B}$ of size roughly $\ell(Q)\times\ELL(Q)$, which will fill the missing piece of the good cone. See the blue rectangle in Figure \ref{fig:rectangleA}. 
		
		The good rectangle $\cB$ will be contained in something we called ``a leftist rectangle''. In order to define it, we first consider the rectangle
		\begin{equation*}
		\cG \coloneqq \bigg\{z\in\R^2\, :\, \pi_0(x)\le\pi_0(z)\le\pi_{0}(y),\ |\pi_{0}^\perp(z) - \pi_{0}^\perp(y)|\le \frac{|\pi_0^\perp(x)-\pi_0^\perp(y)|}{2}\bigg\},
		\end{equation*}
		see the gray rectangle in Figure \ref{fig:rectangleP}. Note that $\ell(\cG)=|\pi_0(x)-\pi_0(y)|\sim\ell(Q),\, \ELL(\cG)=|\pi_0^\perp(x)-\pi_0^\perp(y)|\sim\ELL(Q)$, and the mid-point of its right edge is $y$.
		
		\begin{figure}
			\def\svgwidth{3cm}
\begingroup%
  \makeatletter%
  \providecommand\color[2][]{%
    \errmessage{(Inkscape) Color is used for the text in Inkscape, but the package 'color.sty' is not loaded}%
    \renewcommand\color[2][]{}%
  }%
  \providecommand\transparent[1]{%
    \errmessage{(Inkscape) Transparency is used (non-zero) for the text in Inkscape, but the package 'transparent.sty' is not loaded}%
    \renewcommand\transparent[1]{}%
  }%
  \providecommand\rotatebox[2]{#2}%
  \newcommand*\fsize{\dimexpr\f@size pt\relax}%
  \newcommand*\lineheight[1]{\fontsize{\fsize}{#1\fsize}\selectfont}%
  \ifx\svgwidth\undefined%
    \setlength{\unitlength}{148.27637879bp}%
    \ifx\svgscale\undefined%
      \relax%
    \else%
      \setlength{\unitlength}{\unitlength * \real{\svgscale}}%
    \fi%
  \else%
    \setlength{\unitlength}{\svgwidth}%
  \fi%
  \global\let\svgwidth\undefined%
  \global\let\svgscale\undefined%
  \makeatother%
  \begin{picture}(1,3.35789447)%
    \lineheight{1}%
    \setlength\tabcolsep{0pt}%
    \put(0,0){\includegraphics[width=\unitlength,page=1]{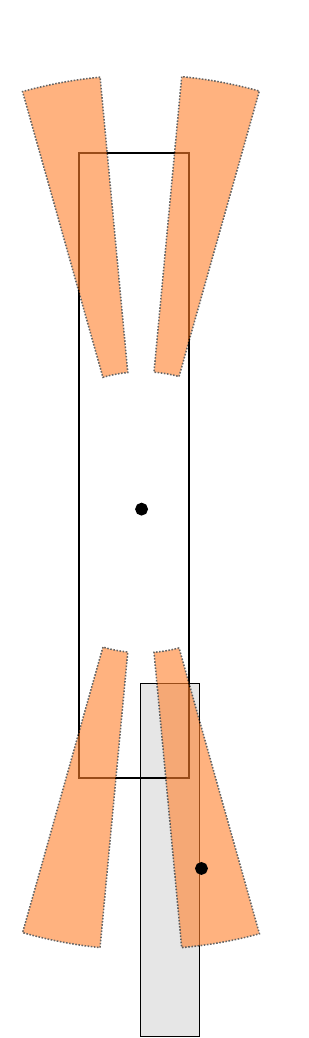}}%
    \put(0.32177261,1.66756741){\color[rgb]{0,0,0}\makebox(0,0)[lt]{\lineheight{1.25}\smash{\begin{tabular}[t]{l}$x$\end{tabular}}}}%
    \put(0.85584353,0.67842814){\color[rgb]{0,0,0}\makebox(0,0)[lt]{\lineheight{1.25}\smash{\begin{tabular}[t]{l}$y$\end{tabular}}}}%
    \put(0,0){\includegraphics[width=\unitlength,page=2]{rectangleP.pdf}}%
  \end{picture}%
\endgroup%

			\caption{The white rectangle is $\cR_Q$, the gray rectangle is $\cG$, and the orange double-truncated cone is $X(x,3J\setminus 0.5J,\rho\ELL(Q), \ELL(Q))$.}\label{fig:rectangleP}
		\end{figure}
			
		Let $N>1$ be a large integer satisfying
		\begin{equation}\label{eq:Ndef}
		N \sim MC_0,
		\end{equation}
		whose precise value will be fixed later on.
		
		We divide $\cG$ into $2N+1$ sub-rectangles $\cG_{-N},\dots,\cG_0,\dots,\cG_N$ such that $\ell(\cG_i)=\ell(\cG)=|\pi_0(x)-\pi_0(y)|$ and $\ELL(\cG_i)=\ELL(\cG)/(2N+1)=|\pi_0^\perp(x)-\pi_0^\perp(y)|/(2N+1)$. We enumerate them in such a way that each $\cG_i$ is on top of $\cG_{i-1}$, and $\cG_0$ is the rectangle containing $y$. See the left hand side of Figure \ref{fig:rectanglesPi}. In formulas,
		\begin{multline*}
		\cG_i\coloneqq \bigg\{z\in\R^2\, :\, \pi_0(x)\le\pi_0(z)\le\pi_{0}(y),\\
		\frac{(2i-1)\ELL(\cG)}{2(2N+1)}\le\pi_{0}^\perp(z) - \pi_{0}^\perp(y)\le \frac{(2i+1)\ELL(\cG)}{2(2N+1)}\bigg\}.
		\end{multline*}
		
		It is not immediately clear that $\ell(\cG_i)$ and $\ELL(\cG_i)$ as we defined them satisfy $\ell(\cG_i)\le\ELL(\cG_i)$, and that $\cG_i$'s look as portrayed in Figure \ref{fig:rectanglesPi}, as opposed to being very flat. We check this in the lemma below.
		\begin{lemma}\label{lem:ddd}
			We have $\ell(\cG_i)\le\ELL(\cG_i)$.
		\end{lemma}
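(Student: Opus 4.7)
The plan is to use the fact that $y$ lies in the cone $X(x, 3J)$, which forces the horizontal displacement $|\pi_0(x)-\pi_0(y)|$ to be very small compared to the vertical displacement $|\pi_0^\perp(x)-\pi_0^\perp(y)|$, with a ratio controlled by $\HH(J)$. Since $N \sim MC_0$ and $\HH(J)\le c_1 C_0^{-1} M^{-1}$ with $c_1$ absolute and small, the product $N\,\HH(J)$ can be made arbitrarily small, which yields the desired inequality.

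First I would apply the algebraic characterization \eqref{eq:cone algebraic} of cones. Recall that $3J$ is an interval centered at $1/4$ with half-length $3\HH(J)/2$, so $\pi_{1/4}^\perp = \pi_0$. Since $y \in X(x,3J)$, we obtain
\begin{equation*}
|\pi_0(y)-\pi_0(x)| \le \sin(3\pi\HH(J))\,|x-y|.
\end{equation*}
Combining this with the Pythagorean identity $|x-y|^2 = |\pi_0(x-y)|^2 + |\pi_0^\perp(x-y)|^2$, we get
\begin{equation*}
|\pi_0^\perp(y)-\pi_0^\perp(x)| \ge \cos(3\pi\HH(J))\,|x-y|.
\end{equation*}
Dividing the two estimates yields
\begin{equation*}
\frac{|\pi_0(x)-\pi_0(y)|}{|\pi_0^\perp(x)-\pi_0^\perp(y)|} \le \tan(3\pi\HH(J)).
\end{equation*}

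Next I would use the smallness of $\HH(J)$. By assumption (b) of \propref{prop:main prop} we have $\HH(J)\le c_1 C_0^{-1} M^{-1}$, and $N\sim MC_0$, so $(2N+1)\HH(J) \lesssim c_1$. Since $c_1$ is an absolute constant we are free to choose, we may assume $c_1$ small enough that $\tan(3\pi\HH(J))\le \frac{1}{2N+1}$ (using $\tan(t)\le 2t$ for small $t$). Thus
\begin{equation*}
|\pi_0(x)-\pi_0(y)| \le \frac{|\pi_0^\perp(x)-\pi_0^\perp(y)|}{2N+1},
\end{equation*}
which is precisely $\ell(\cG_i)\le \ELL(\cG_i)$ by the definitions of $\ell(\cG_i)$ and $\ELL(\cG_i)$.

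There is no genuine obstacle here; the only thing to verify is that the constant $c_1$ fixed in \propref{prop:main prop} is compatible with the (absolute) choice needed to make $\tan(3\pi\HH(J))\le (2N+1)^{-1}$ valid for the particular value of $N$ chosen in \lemref{lem:good-rectangle}. This is why the list of parameters in Section \ref{sec:prel} records that $N=N(C_0,M)$ with $N\sim C_0 M$, and $c_1$ is an absolute constant chosen only after $N$ is fixed.
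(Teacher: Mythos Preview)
Your proof is correct and follows essentially the same approach as the paper: both arguments show that $\ell(\cG_i)/\ELL(\cG_i)$ is controlled by a quantity of order $N\HH(J)\sim MC_0\,\HH(J)\lesssim c_1$, and then choose the absolute constant $c_1$ small enough. The only cosmetic difference is that the paper routes the estimate through $\ell(Q)$ and $\ELL(Q)$ via $\ell(\cG_i)\sim\ell(Q)$ and $\ELL(\cG_i)\sim\ELL(Q)/N=\ell(Q)/(N\HH(J))$, whereas you work directly with the exact cone inequality \eqref{eq:cone algebraic} to bound $|\pi_0(x)-\pi_0(y)|/|\pi_0^\perp(x)-\pi_0^\perp(y)|\le\tan(3\pi\HH(J))$.
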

	\begin{proof}
		Recall that $\ell(\cG_i)=\ell(\cG)\sim\ell(Q)$, and 
		\begin{equation}\label{eq:ELLPi}
		\ELL(\cG_i) = \frac{\ELL(\cG)}{2N+1} \sim \frac{\ELL(Q)}{N} = \frac{\HH(J)^{-1}\ell(Q)}{N}= \frac{\ell(\cG_i)}{\HH(J)N}\overset{\eqref{eq:Ndef}}{\sim} \frac{\ell(\cG_i)}{\HH(J)MC_0}.
		\end{equation}
		Assumption (b) of \propref{prop:main prop} stated that $\HH(J)\le c_1C_0^{-1} M^{-1}$, where $c_1>0$ is a small absolute constant. Assuming $c_1$ to be small enough, the above estimates give
		\begin{equation}\label{eq:Pitall}
		\ELL(\cG_i)\ge \ell(\cG_i).
		\end{equation}
	\end{proof}
		
		\begin{figure}
			\def\svgwidth{8cm}
			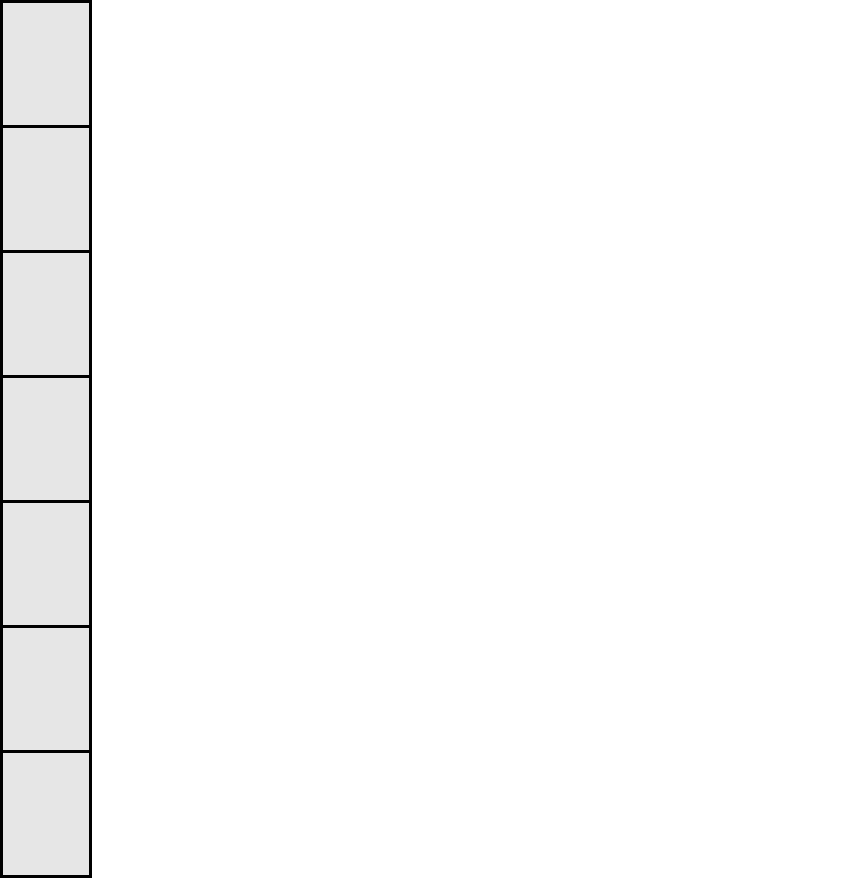
			\caption{On the left, the rectangle $\cG$ subdivided into subrectangles $\cG_i$ for $N=3$. On the right, 3 subrectangles $\cG_{i-1},\, \cG_i,\, \cG_{i+1}$. The black curves represent the set $E$. Since $\cG_i\cap E\neq\varnothing$ and $\cG_{i+1}\cap E\neq\varnothing$, the corresponding leftmost points $z_i$ and $z_{i+1}$ are well-defined. Note that $\cG_i$ is a leftist rectangle: $\cG_i\prec\cG_{i-1}$ because $\cG_{i-1}\cap E=\varnothing$, and $\cG_i\prec\cG_{i+1}$ because $\pi_0(z_i)\le\pi_0(z_{i+1})$.}\label{fig:rectanglesPi}
		\end{figure}		
		
		The following three definitions are easier to digest together with the right hand side of \figref{fig:rectanglesPi}. 
		\begin{definition}\label{def:leftmost}
			For each $\cG_i$ with $\cG_i\cap E\neq\varnothing$, let $z_i\in \cG_i\cap E$ be a point such that
			\begin{equation*}
			\pi_0(z_i)=\inf_{z\in \cG_i\cap E}\pi_0(z).
			\end{equation*}
			We will call $z_i$ the \emph{leftmost point of $\cG_i\cap E.$} Note that the left-most point is well-defined because $\cG_i$ and $E$ are closed. It might be non-unique, but we do not care.
		\end{definition}		
		
		\begin{definition}\label{def:relation}
			If $-N\le i,j\le N$ and $\cG_i\cap E\neq\varnothing$, then we will write {$\cG_i\prec\cG_j$} if either $\cG_j\cap E=\varnothing$ or $\pi_0(z_i)\le \pi_0(z_j)$. In other words, $\cG_i\prec\cG_j$ means that there is no point of $\cG_j\cap E$ to the left of $z_i$.
		\end{definition}
		
		\begin{definition}\label{def:goodrec}
			For $-N+1\le i\le N-1$, we will say that $\cG_i$ is a \emph{leftist rectangle} if $\cG_i\cap E\neq\varnothing$ and we have $\cG_i\prec\cG_{i-1}$ and $\cG_i\prec\cG_{i+1}$. That is, the point $z_i$ is the leftmost point of $(\cG_{i-1}\cup\cG_i\cup\cG_{i+1})\cap E$.
		\end{definition}

		\begin{lemma}\label{lem:good-rectangle}
			There exists $-N+1\le i\le N-1$ such that $\cG_i$ is a leftist rectangle.
		\end{lemma}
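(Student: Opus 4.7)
The plan is to argue by contradiction. Assuming no $\cG_i$ with $i\in\{-N+1,\ldots,N-1\}$ is leftist, I will chain adjacent rectangles together to produce $N+1$ distinct points of $E$ inside $\cG$; AD-regularity then forces too much mass in a thin rectangle, contradicting \lemref{lem:ADRrectangles} once $N$ is chosen sufficiently large in terms of $MC_0$.

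First, build the chain. Since $y\in\cG_0\cap E$, the leftmost point $z_0$ is defined. If $\cG_0$ is not leftist, then either $\cG_0\not\prec\cG_{-1}$ or $\cG_0\not\prec\cG_{1}$; by symmetry assume the latter, so $\cG_1\cap E\neq\varnothing$ and $\pi_0(z_1)<\pi_0(z_0)$. Inductively, for $1\le i\le N-1$, once we know that $\pi_0(z_i)<\pi_0(z_{i-1})$ (so $\cG_i\prec\cG_{i-1}$), the hypothesis that $\cG_i$ is not leftist forces $\cG_i\not\prec\cG_{i+1}$, meaning $\cG_{i+1}\cap E\neq\varnothing$ and $\pi_0(z_{i+1})<\pi_0(z_i)$. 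Iterating all the way to $i=N-1$, we obtain $N+1$ distinct points $z_0,z_1,\ldots,z_N\in E$, with $z_i\in\cG_i$ and strictly decreasing $\pi_0$-coordinates.

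Next, extract well-separated points. Because $\cG_{2k+1}$ lies strictly between $\cG_{2k}$ and $\cG_{2k+2}$, we have $|z_{2k}-z_{2k+2}|\ge\ELL(\cG)/(2N+1)\gtrsim \ELL(Q)/N$. Pick $r\sim\ELL(Q)/N$ small enough that the $\lfloor N/2\rfloor+1$ balls $B(z_{2k},r)$ are pairwise disjoint; since $\ELL(Q)\lesssim 1$ and $N$ will be taken large, we have $r<\diam(E)=1$, so AD-regularity gives $\mu(B(z_{2k},r))\ge C_0^{-1}r$. Summing yields the lower bound $\sum_k\mu(B(z_{2k},r))\gtrsim C_0^{-1}\ELL(Q)$.

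For the matching upper bound, assumption (b) of \propref{prop:main prop}, together with $N\sim MC_0$, gives $N\HH(J)\lesssim c_1$, so $r\sim \ELL(Q)/N\gtrsim\ell(Q)$. Hence all selected balls lie in a vertically-aligned rectangle $\cG^*$ with $\theta(\cG^*)=1/4$, $\ell(\cG^*)\sim r\sim \ELL(Q)/N$, and $\ELL(\cG^*)\sim\ELL(Q)$. Its aspect ratio $\ell(\cG^*)/\ELL(\cG^*)\sim 1/N$ dominates $|\theta_0-1/4|\le 1.5\HH(J)$, so \lemref{lem:ADRrectangles} applies with $\theta=\theta_0$ and gives $\mu(\cG^*)\lesssim M\ell(\cG^*)\sim M\ELL(Q)/N$. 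Combining the two estimates yields $C_0^{-1}\ELL(Q)\lesssim M\ELL(Q)/N$, i.e.\ $N\lesssim MC_0$; fixing the absolute constant in $N\sim MC_0$ large enough produces a contradiction, which is precisely where the exact value of $N$ gets pinned down. The most delicate point is arranging the enlarged rectangle $\cG^*$ so that \lemref{lem:ADRrectangles} is applicable with the required aspect ratio — this works precisely because the smallness of $\HH(J)$ from hypothesis (b) forces $r$ to dominate $\ell(Q)$, making $\cG^*$ a thin vertical rectangle rather than a fat one.
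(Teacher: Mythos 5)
Your proof is correct and follows essentially the same route as the paper: argue by contradiction, chain the non-leftist rectangles to produce $\sim N$ points of $E$ inside $\cG$, get a lower bound on mass from AD-regularity, and an upper bound from \lemref{lem:ADRrectangles}, forcing $N\lesssim MC_0$. The only (cosmetic) difference is in the counting: the paper keeps all $N$ points with balls of radius $\ell(\cG_j)$ inside the boundedly overlapping rectangles $3\cG_j\subset 3\cG$, while you keep every other point with larger balls of radius $\sim\ELL(\cG_i)$ inside a wider rectangle $\cG^*$; both yield the same contradiction.
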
 
		\begin{proof}
			Suppose the opposite, so that none of the rectangles is leftist. In particular, $\cG_0$ is not leftist. This means that either $\cG_0\cap E=\varnothing$, or for some $i\in\{-1,1\}$ we have $\cG_i\prec \cG_0$. Since $y\in\cG_0\cap E$, the second alternative holds. Without loss of generality assume that $\cG_1\prec \cG_0$. 
			
			Since $\cG_1$ is not leftist, but $\cG_1\prec\cG_0$, we get that $\cG_2\prec\cG_1$. In particular, $\cG_2\cap E\neq\varnothing$. Continuing in this way, we get for $1\le j\le N-1$ that $\cG_{j+1}\prec\cG_j\prec\cG_{j-1}$. In particular, for all $1\le j\le N$ we have $z_j\in\cG_j\cap E\neq\varnothing$. 
			
			Let $1\le j\le N$. By \eqref{eq:Pitall}, we have $B(z_j,\ell(\cG_j))\subset 3\cG_j$, and so
			\begin{equation*}
				\mu(3\cG_j)\ge \mu(B(z_j,\ell(\cG_j)))\ge C_0^{-1} \ell(\cG_j).
			\end{equation*}
			Since the rectangles $\{3\cG_j\}_{j=1}^N$ have bounded overlap, and they are all contained in $3\cG$, we get that
			\begin{equation}\label{eq:est5}
				\mu(3\cG)\gtrsim \sum_{j=1}^N \mu(3\cG_j)\ge \sum_{j=1}^N C_0^{-1} \ell(\cG_j) = NC_0^{-1} \ell(\cG).
			\end{equation}
		Recall that $\ell(\cG)=|x-y|$ and $\ELL(\cG)=|\pi_0(x)-\pi_0(y)|\sim\HH(J)^{-1}\ell(\cG)$, so that $3\cG\subset \cR(y,C\ell(\cG))$ for some absolute constant $C>1$. 
		
		Now is one of the key points where we use the $L^\infty$-estimate for projections. Recall that our assumption $\|\pi_{\theta_0}^\perp\mu\|_\infty \le M$ implied the upper bound on $\mu$-measure of rectangles \eqref{eq:ADRrectangles}. This gives
		\begin{equation}\label{eq:est6}
			\mu(3\cG)\le \mu(\cR(y,C\ell(\cG)))\lesssim M\ell(\cG).
		\end{equation}
		Let us compare this with the lower bound \eqref{eq:est5}. In the definition of $N$ \eqref{eq:Ndef} we assumed $N\sim MC_0$. Let $N= \lceil C'MC_0\rceil$, where $C'>1$ is a big absolute constant. Pitting \eqref{eq:est5} against \eqref{eq:est6} and choosing $C'>1$ large enough, we reach a contradiction.
		\end{proof}	
		
		The combination of \lemref{lem:good-rectangle} and the following lemma will complete the proof of the key geometric lemma. 
		\begin{lemma}\label{lem:why-good-rectangle}
			If $\cG_i$ is a leftist rectangle, then $\pi_0(z_i)$ is the right endpoint of some gap $K\in\cK(R,\ell(Q))$ with $\pi_0(\cR_Q)\subset A^3K$.
		\end{lemma}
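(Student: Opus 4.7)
The plan is to build an auxiliary good rectangle $\cA$ (``good'' meaning $\cA\cap E=\varnothing$) whose right edge lies on the vertical line $\{\pi_0=\pi_0(z_i)\}$, with horizontal width $\gtrsim A^{-1}\ell(Q)$ and vertical range containing $\pi_0^\perp(A\cR_R)$. Granted such an $\cA$, any point $p\in A\cR_R\cap E$ with $\pi_0(p)\in\pi_0(\mathrm{int}(\cA))$ would satisfy $\pi_0^\perp(p)\in\pi_0^\perp(A\cR_R)\subset\pi_0^\perp(\cA)$ and hence lie in $\cA$, contradicting $\cA\cap E=\varnothing$. Consequently $\pi_0(\mathrm{int}(\cA))\cap\pi_0(A\cR_R\cap E)=\varnothing$, so $\pi_0(\mathrm{int}(\cA))$ lies in some gap $K\in\cK(R)$. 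Since $z_i\in A\cR_R\cap E$ (from $z_i\in\cG_i\subset\cG\subset C\cR_Q\subset A\cR_R$ for an absolute $C$, provided $A\ge C$), we have $\pi_0(z_i)\in\pi_0(A\cR_R\cap E)$, which forces the right endpoint of $K$ to equal $\pi_0(z_i)$.

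The construction of $\cA$ is the main obstacle and combines three empty regions around $z_i$. First, the empty cone assumption \eqref{eq:empty cones} at $z=z_i$ (valid since $z_i\in A\cR_Q\cap E$) yields
\begin{equation*}
X(z_i,0.5J,A^{-1}\ELL(Q),A^2\ELL(R))\cap E=\varnothing.
\end{equation*}
Because $X(\cdot,0.5J)$ has half-angle $\sim\HH(J)$, the cone's horizontal half-width at vertical distance $r$ from $z_i$ is $\sim\HH(J)r$: it equals $\sim A^{-1}\ell(Q)$ at the inner radius $r=A^{-1}\ELL(Q)$ and grows linearly beyond. Hence the upper half of this truncated cone contains a vertical strip of width $cA^{-1}\ell(Q)$ sitting just to the left of $\{\pi_0=\pi_0(z_i)\}$ and extending from $\pi_0^\perp(z_i)+A^{-1}\ELL(Q)$ upward to $\pi_0^\perp(z_i)+A^2\ELL(R)/2$; symmetrically, the lower half provides such a strip going downward. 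What remains uncovered is the ``waist'' $|\pi_0^\perp(z)-\pi_0^\perp(z_i)|\le A^{-1}\ELL(Q)$. To fill it I use the leftist property (Definition \ref{def:goodrec}): $(\cG_{i-1}\cup\cG_i\cup\cG_{i+1})\cap E$ contains no point with $\pi_0<\pi_0(z_i)$. Its horizontal extent reaching leftward from $\pi_0(z_i)$ equals $\pi_0(z_i)-\pi_0(x)$, and applying \eqref{eq:empty cones} at $z=x$ together with $z_i\notin X(x,0.5J)$ and $|z_i-x|\gtrsim\ELL(Q)$ (forced by $z_i\in\cG$) gives $\pi_0(z_i)-\pi_0(x)\gtrsim\HH(J)\ELL(Q)=\ell(Q)$, easily exceeding $cA^{-1}\ell(Q)$. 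Its vertical half-height is at least $\ELL(\cG_i)=\ELL(Q)/(2N+1)$, and since $N\sim C_0M$ by \eqref{eq:Ndef} while $A=A(C_0,M)$ is chosen with $A\ge 2N+1$, one has $\ELL(\cG_i)\ge A^{-1}\ELL(Q)$, so the leftist rectangle swallows the waist. Gluing the three pieces yields $\cA$, and the vertical inclusion $\pi_0^\perp(\cA)\supset\pi_0^\perp(A\cR_R)$ follows from $|\pi_0^\perp(z_i)-\pi_0^\perp(x_R)|\lesssim\ELL(R)$ together with $A^2\ge 2A$.

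It remains to verify the size and location of the gap $K$. The lower bound $\HH(K)\ge\HH(\pi_0(\cA))\gtrsim A^{-1}\ell(Q)$ is immediate. For the upper bound, $x\in Q\cap E\subset A\cR_R\cap E$ yields $\pi_0(x)\in\pi_0(A\cR_R\cap E)$, so the left endpoint of $K$ is at least $\pi_0(x)$; hence $\HH(K)\le\pi_0(z_i)-\pi_0(x)\le\pi_0(y)-\pi_0(x)\sim\ell(Q)\le A\ell(Q)$. Thus $K\in\cK(R,\ell(Q))$. Finally, $z_i\in\cG\subset C\cR_Q$ gives $|\pi_0(z_i)-\pi_0(x_Q)|\lesssim\ell(Q)$, and since $A^3K$ is an interval of length $\ge A^2\ell(Q)$ whose center lies within $\HH(K)/2\lesssim\ell(Q)$ of $\pi_0(z_i)$, we conclude that $A^3K$ contains an interval of radius $\sim A^2\ell(Q)$ around $\pi_0(z_i)$, which includes $\pi_0(\cR_Q)$ for $A$ sufficiently large, completing the proof.
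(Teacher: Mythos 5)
Your proof is correct and follows essentially the same route as the paper: your glued rectangle $\cA$ (empty cone at $z_i$ covering everything outside the waist, leftist region $\cG_{i-1}\cup\cG_i\cup\cG_{i+1}$ restricted to $\pi_0\le\pi_0(z_i)$ covering the waist) is exactly the paper's decomposition in \lemref{lem:AinGX}, your bound $\pi_0(z_i)-\pi_0(x)\gtrsim\ell(Q)$ via the empty cone at $x$ is \lemref{lem:tildePiwide}, and the gap-size and $\pi_0(\cR_Q)\subset A^3K$ arguments coincide. The only thing to tidy is the constant in the strip width: to land in $\cK(R,\ell(Q))$ you need $\HH(K)\ge A^{-1}\ell(Q)$ exactly, which holds since $\sin(0.5\pi\HH(J))\ge\HH(J)$ lets you take the strip of width precisely $A^{-1}\ell(Q)$.
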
		
		We divide the proof of \lemref{lem:why-good-rectangle} into several steps.			
		\subsection{Small good rectangle $\cB$}
		Assume that $\cG_i$ is a leftist rectangle. We define
		\begin{align}\label{eq:defG}
		\cB&\coloneqq \{z\in \cG_{i-1}\cup\cG_i\cup\cG_{i+1}\, :\, \pi_0(z)\le\pi_{0}(z_i)\},\\
		&= \{z\in \cG_{i-1}\cup\cG_i\cup\cG_{i+1}\, :\, \pi_0(x)\le\pi_0(z)\le\pi_{0}(z_i)\},
		\end{align}
		see the blue rectangle in \figref{fig:rectangletildePi}. A priori it might happen that $\pi_0(z_i)=\pi_0(x)$, in which case $\cB$ would be a degenerate rectangle (a segment). We show in \lemref{lem:tildePiwide} below that this is not the case.
		
		Note that
		\begin{equation*}
		\ELL(\cB)=\ELL(\cG_{i-1})+\ELL(\cG_{i}) + \ELL(\cG_{i+1}) = \frac{3\ELL(\cG)}{2N+1} \sim \frac{\ELL(Q)}{N},
		\end{equation*}
		and also $\ell(\cB)=|\pi_0(z_i)-\pi_0(x)|$.
		
		Since $\cG_i$ is a leftist rectangle, it follows immediately from the definitions of leftist rectangles and leftmost points that
		\begin{equation}\label{eq:tildePinoE2}
		\text{int}(\cB)\cap E = \varnothing,
		\end{equation}
		so that $\text{int}(\cB)$ is a good (open) rectangle.
		\begin{figure}
			\def\svgwidth{3cm}
\begingroup%
  \makeatletter%
  \providecommand\color[2][]{%
    \errmessage{(Inkscape) Color is used for the text in Inkscape, but the package 'color.sty' is not loaded}%
    \renewcommand\color[2][]{}%
  }%
  \providecommand\transparent[1]{%
    \errmessage{(Inkscape) Transparency is used (non-zero) for the text in Inkscape, but the package 'transparent.sty' is not loaded}%
    \renewcommand\transparent[1]{}%
  }%
  \providecommand\rotatebox[2]{#2}%
  \newcommand*\fsize{\dimexpr\f@size pt\relax}%
  \newcommand*\lineheight[1]{\fontsize{\fsize}{#1\fsize}\selectfont}%
  \ifx\svgwidth\undefined%
    \setlength{\unitlength}{176.03997371bp}%
    \ifx\svgscale\undefined%
      \relax%
    \else%
      \setlength{\unitlength}{\unitlength * \real{\svgscale}}%
    \fi%
  \else%
    \setlength{\unitlength}{\svgwidth}%
  \fi%
  \global\let\svgwidth\undefined%
  \global\let\svgscale\undefined%
  \makeatother%
  \begin{picture}(1,2.085858)%
    \lineheight{1}%
    \setlength\tabcolsep{0pt}%
    \put(0,0){\includegraphics[width=\unitlength,page=1]{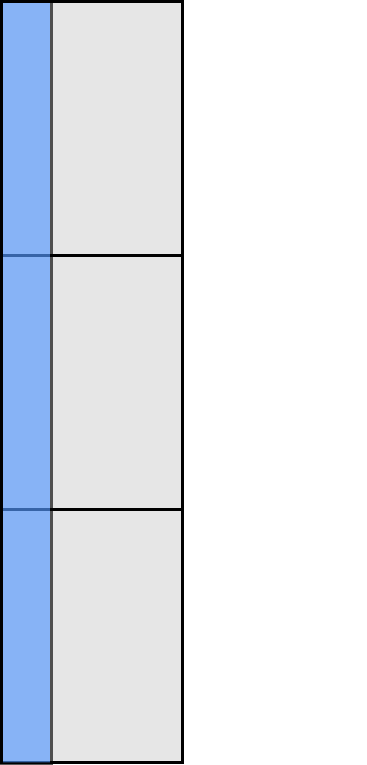}}%
    \put(0.56540633,0.32516552){\color[rgb]{0,0,0}\makebox(0,0)[lt]{\lineheight{1.25}\smash{\begin{tabular}[t]{l}$\cG_{i-1}$\end{tabular}}}}%
    \put(0.56540633,1.03396356){\color[rgb]{0,0,0}\makebox(0,0)[lt]{\lineheight{1.25}\smash{\begin{tabular}[t]{l}$\cG_i$\end{tabular}}}}%
    \put(0.56540633,1.73971835){\color[rgb]{0,0,0}\makebox(0,0)[lt]{\lineheight{1.25}\smash{\begin{tabular}[t]{l}$\cG_{i+1}$\end{tabular}}}}%
    \put(0,0){\includegraphics[width=\unitlength,page=2]{rectangletildePi.pdf}}%
    \put(0.18249146,1.25785929){\color[rgb]{0,0,0}\makebox(0,0)[lt]{\lineheight{1.25}\smash{\begin{tabular}[t]{l}$z_i$\end{tabular}}}}%
  \end{picture}%
\endgroup%

			\caption{The blue rectangle is $\cB$. In \lemref{lem:tildePiwide} we show that $\ell(\cB)\sim\ell(\cG)\sim\ell(Q)$.}\label{fig:rectangletildePi}
		\end{figure}
		\begin{lemma}\label{lem:tildePiwide}
			We have $|\pi_0(z_i)-\pi_0(x)|=\ell(\cB)\sim\ell(Q)$.
		\end{lemma}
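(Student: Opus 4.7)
The plan is to get the upper and lower bounds $\ell(\cB) \sim \ell(Q)$ separately. The upper bound $\ell(\cB) = |\pi_0(z_i)-\pi_0(x)| \le |\pi_0(y)-\pi_0(x)| = \ell(\cG) \sim \ell(Q)$ is immediate from $z_i \in \cG_i \subset \cG$ and the fact that $y \in X(x,3J\setminus 0.5J,\rho\ELL(Q),\ELL(Q))$ gives $|\pi_0(y)-\pi_0(x)| \sim \ell(Q)$. So the content of the lemma is the lower bound $\ell(\cB) \gtrsim \ell(Q)$, which I would prove by contradiction using the empty cone assumption \eqref{eq:empty cones} applied at $z = x \in Q \subset A\cR_Q$.

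First I would control the vertical separation of $z_i$ from $x$. Since $z_i \in \cG_i$ with $|i| \le N-1$, the definition of $\cG_i$ forces
\[
|\pi_0^\perp(z_i) - \pi_0^\perp(y)| \le \tfrac{(2|i|+1)\ELL(\cG)}{2(2N+1)} \le \tfrac{1}{2}\ELL(\cG),
\]
and combining with $\pi_0^\perp(x)-\pi_0^\perp(y) = \ELL(\cG) \sim \ELL(Q)$ gives $|\pi_0^\perp(z_i)-\pi_0^\perp(x)| \gtrsim \ELL(Q)$. In particular $|x-z_i| \sim \ELL(Q)$, with both bounds independent of $A$ (the upper bound is trivial from $z_i \in \cG$).

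Now suppose for contradiction that $|\pi_0(z_i)-\pi_0(x)| \le c\,\ell(Q)$ for a small absolute $c > 0$ to be chosen. Using the algebraic characterization \eqref{eq:cone algebraic} of the cone $X(x,0.5J)$, whose half-aperture is $\HH(J)/4$, the condition $z_i \in X(x,0.5J)$ reduces to
\[
|\pi_0(z_i)-\pi_0(x)| \le \sin\bigl(\tfrac{\HH(J)}{2}\pi\bigr)|x-z_i|,
\]
and the right-hand side is $\gtrsim \HH(J)\,\ELL(Q) = \ell(Q)$. So for $c$ small enough this membership holds. Moreover, the two-sided estimate $|x-z_i| \sim \ELL(Q)$ puts $z_i$ in the truncation range $[A^{-1}\ELL(Q), A^2\ELL(R)]$ as soon as $A$ is chosen large enough (using $\ELL(Q) \le \ELL(R)$). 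Since $z_i \in E$ and $x \in A\cR_Q \cap E$, this would give $z_i \in X(x,0.5J,A^{-1}\ELL(Q),A^2\ELL(R)) \cap E$, contradicting \eqref{eq:empty cones}.

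The main obstacle — really the only subtlety — is bookkeeping the constants: the lower bound $|x-z_i| \ge A^{-1}\ELL(Q)$ needed to enter the truncated cone depends on $A$, while the vertical-separation lower bound $|\pi_0^\perp(z_i)-\pi_0^\perp(x)| \gtrsim \ELL(Q)$ comes with an absolute constant, so one must verify that $A$ (already chosen large in \lemref{lem:good-rectangle} as $A \gg MC_0$) dominates this absolute constant. The threshold $c$ below which the contradiction fires is then itself an absolute constant, which yields $\ell(\cB) = |\pi_0(z_i)-\pi_0(x)| \gtrsim \ell(Q)$, completing the proof.
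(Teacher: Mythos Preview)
Your proof is correct and follows essentially the same approach as the paper: assume $\ell(\cB)$ is too small and show that $z_i$ lands in the good cone $X(x,0.5J,A^{-1}\ELL(Q),\cdot)$, contradicting \eqref{eq:empty cones}. The paper phrases the contradiction only slightly differently---it shows the \emph{entire} box $\cB$ lies in the truncated cone and then observes $z_i\in\cB\cap E$---but this is the same idea; one minor correction: the constant $A$ is not fixed in \lemref{lem:good-rectangle} (that lemma fixes $N$), but rather in \lemref{lem:AinGX}, though your point that $A$ may be taken large enough to absorb the absolute constant in $|x-z_i|\gtrsim\ELL(Q)$ is correct.
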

		\begin{proof}
			Since $\cB\subset\cG$, it is clear that
			\begin{equation*}
			\ell(\cB)\le\ell(\cG)\sim\ell(Q),
			\end{equation*}
			so we only need to prove $\ell(\cB)\gtrsim\ell(\cG)\sim\ell(Q)$. See \figref{fig:rectangleP'} to get some intuition on why this is true. We give a formal argument below.
			
			Assume the contrary, so that $\ell(\cB)\le c\,\ell(\cG)$ for some small absolute constant $0<c<1$. We claim that if $0<c<1$ is chosen small enough, then
			\begin{equation}\label{eq:rectangle in cone}
				\cB\subset X(x,0.5J,A^{-1}\ELL(Q), A\ELL(Q)).
			\end{equation}
%
			To see that, observe that if $z\in\cB$, then 
			\begin{equation*}
			|\pi_0(z)-\pi_0(x)|\le \ell(\cB)\le c\ell(\cG)\sim c\ell(Q),
			\end{equation*}
			and also, since $\cB\subset\cG$,
			\begin{equation*}
			\frac{\ELL(\cG)}{2}\le |\pi_0^\perp(z)-\pi_0^\perp(x)| \le \frac{3\ELL(\cG)}{2}.
			\end{equation*}
			In particular, 
			$|\pi_0^\perp(z)-\pi_0^\perp(x)|\sim\ELL(\cG)\sim\ELL(Q)=\HH(J)^{-1}\ell(Q)$.
			It follows that 
			\begin{equation*}
			|\pi_0(z)-\pi_0(x)|\lesssim c\HH(J)|\pi_0^\perp(z)-\pi_0^\perp(x)|.
			\end{equation*}
			If $0<c<1$ is chosen small enough, we get that $z\in X(x,0.5J)$.
			
			Since 
			\begin{equation*}
			|x-z|\sim |\pi_0(z)-\pi_0(x)|+|\pi_0^\perp(z)-\pi_0^\perp(x)|\sim \ELL(Q),
			\end{equation*}
			we also have $z\in X(x,0.5J,A^{-1}\ELL(Q),A\ELL(Q))$ if $A$ is chosen large enough. This shows \eqref{eq:rectangle in cone}.
			\begin{figure}
				\def\svgwidth{8cm}
\begingroup%
  \makeatletter%
  \providecommand\color[2][]{%
    \errmessage{(Inkscape) Color is used for the text in Inkscape, but the package 'color.sty' is not loaded}%
    \renewcommand\color[2][]{}%
  }%
  \providecommand\transparent[1]{%
    \errmessage{(Inkscape) Transparency is used (non-zero) for the text in Inkscape, but the package 'transparent.sty' is not loaded}%
    \renewcommand\transparent[1]{}%
  }%
  \providecommand\rotatebox[2]{#2}%
  \newcommand*\fsize{\dimexpr\f@size pt\relax}%
  \newcommand*\lineheight[1]{\fontsize{\fsize}{#1\fsize}\selectfont}%
  \ifx\svgwidth\undefined%
    \setlength{\unitlength}{480.36810867bp}%
    \ifx\svgscale\undefined%
      \relax%
    \else%
      \setlength{\unitlength}{\unitlength * \real{\svgscale}}%
    \fi%
  \else%
    \setlength{\unitlength}{\svgwidth}%
  \fi%
  \global\let\svgwidth\undefined%
  \global\let\svgscale\undefined%
  \makeatother%
  \begin{picture}(1,1.2011657)%
    \lineheight{1}%
    \setlength\tabcolsep{0pt}%
    \put(0,0){\includegraphics[width=\unitlength,page=1]{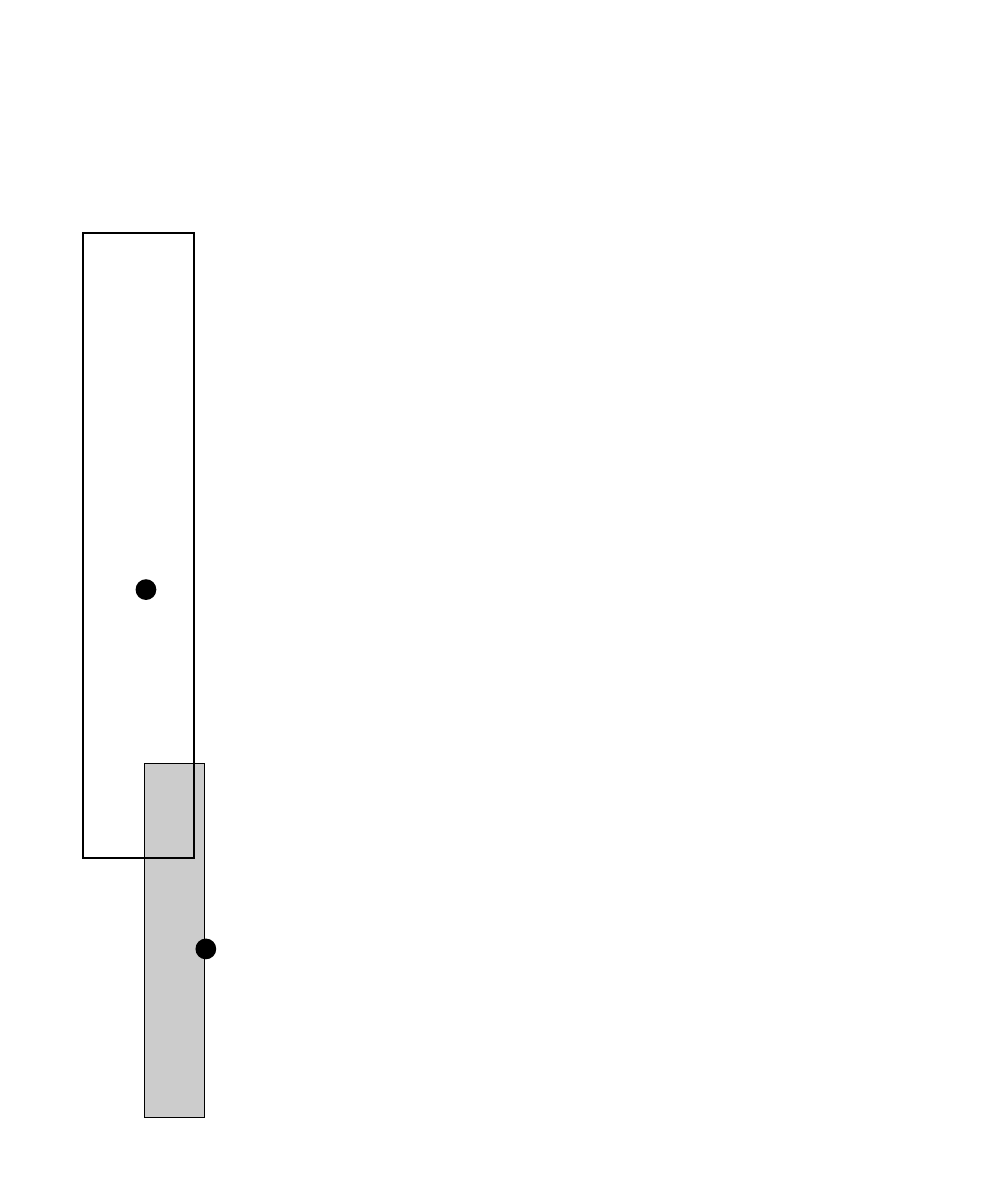}}%
    \put(0.09778237,0.59782026){\color[rgb]{0,0,0}\makebox(0,0)[lt]{\lineheight{1.25}\smash{\begin{tabular}[t]{l}$x$\end{tabular}}}}%
    \put(0.22855417,0.24405038){\color[rgb]{0,0,0}\makebox(0,0)[lt]{\lineheight{1.25}\smash{\begin{tabular}[t]{l}$y$\end{tabular}}}}%
    \put(0,0){\includegraphics[width=\unitlength,page=2]{rectangleP_.pdf}}%
    \put(0.69766336,1.11805354){\color[rgb]{0,0,0}\makebox(0,0)[lt]{\lineheight{1.25}\smash{\begin{tabular}[t]{l}$x$\end{tabular}}}}%
    \put(0.89469072,0.41204688){\color[rgb]{0,0,0}\makebox(0,0)[lt]{\lineheight{1.25}\smash{\begin{tabular}[t]{l}$y$\end{tabular}}}}%
    \put(0,0){\includegraphics[width=\unitlength,page=3]{rectangleP_.pdf}}%
    \put(0.83594918,0.51662254){\color[rgb]{0,0,0}\makebox(0,0)[lt]{\lineheight{1.25}\smash{\begin{tabular}[t]{l}$z_i$\end{tabular}}}}%
  \end{picture}%
\endgroup%

				\caption{On the left we see the full picture, on the right we zoom in on the dashed-border rectangle. The white rectangle is $\cR_Q$, the gray rectangle is $\cG$, the blue rectangle is $\cB$, the red double-truncated cone is $X(x,0.5J,A^{-1}\ELL(Q), A\ELL(Q))$. The red cone has an empty intersection with $E$ by \eqref{eq:empty cones}, whereas $\cB$ contains the point $z_i\in E$. Thus, $\cB$ cannot be fully contained in the red cone, which gives $\ell(\cB)\gtrsim\ell(Q)$.}\label{fig:rectangleP'}
			\end{figure}
		
			Recall that $X(x,0.5J,A^{-1}\ELL(Q), A\ELL(Q))\cap E=\varnothing$ by the assumption \eqref{eq:empty cones}. At the same time,
			$\cB$ contains $z_i\in E$. This contradicts \eqref{eq:rectangle in cone}. Hence,
			\begin{equation*}
			\ell(\cB)\ge c\ell(\cG)\sim \ell(Q).
			\end{equation*}
		\end{proof}
		
		\subsection{Big good rectangle $\mathcal{Y}$}
		\begin{figure}
			\def\svgwidth{13cm}
\begingroup%
  \makeatletter%
  \providecommand\color[2][]{%
    \errmessage{(Inkscape) Color is used for the text in Inkscape, but the package 'color.sty' is not loaded}%
    \renewcommand\color[2][]{}%
  }%
  \providecommand\transparent[1]{%
    \errmessage{(Inkscape) Transparency is used (non-zero) for the text in Inkscape, but the package 'transparent.sty' is not loaded}%
    \renewcommand\transparent[1]{}%
  }%
  \providecommand\rotatebox[2]{#2}%
  \newcommand*\fsize{\dimexpr\f@size pt\relax}%
  \newcommand*\lineheight[1]{\fontsize{\fsize}{#1\fsize}\selectfont}%
  \ifx\svgwidth\undefined%
    \setlength{\unitlength}{855.25854011bp}%
    \ifx\svgscale\undefined%
      \relax%
    \else%
      \setlength{\unitlength}{\unitlength * \real{\svgscale}}%
    \fi%
  \else%
    \setlength{\unitlength}{\svgwidth}%
  \fi%
  \global\let\svgwidth\undefined%
  \global\let\svgscale\undefined%
  \makeatother%
  \begin{picture}(1,1.09168863)%
    \lineheight{1}%
    \setlength\tabcolsep{0pt}%
    \put(0,0){\includegraphics[width=\unitlength,page=1]{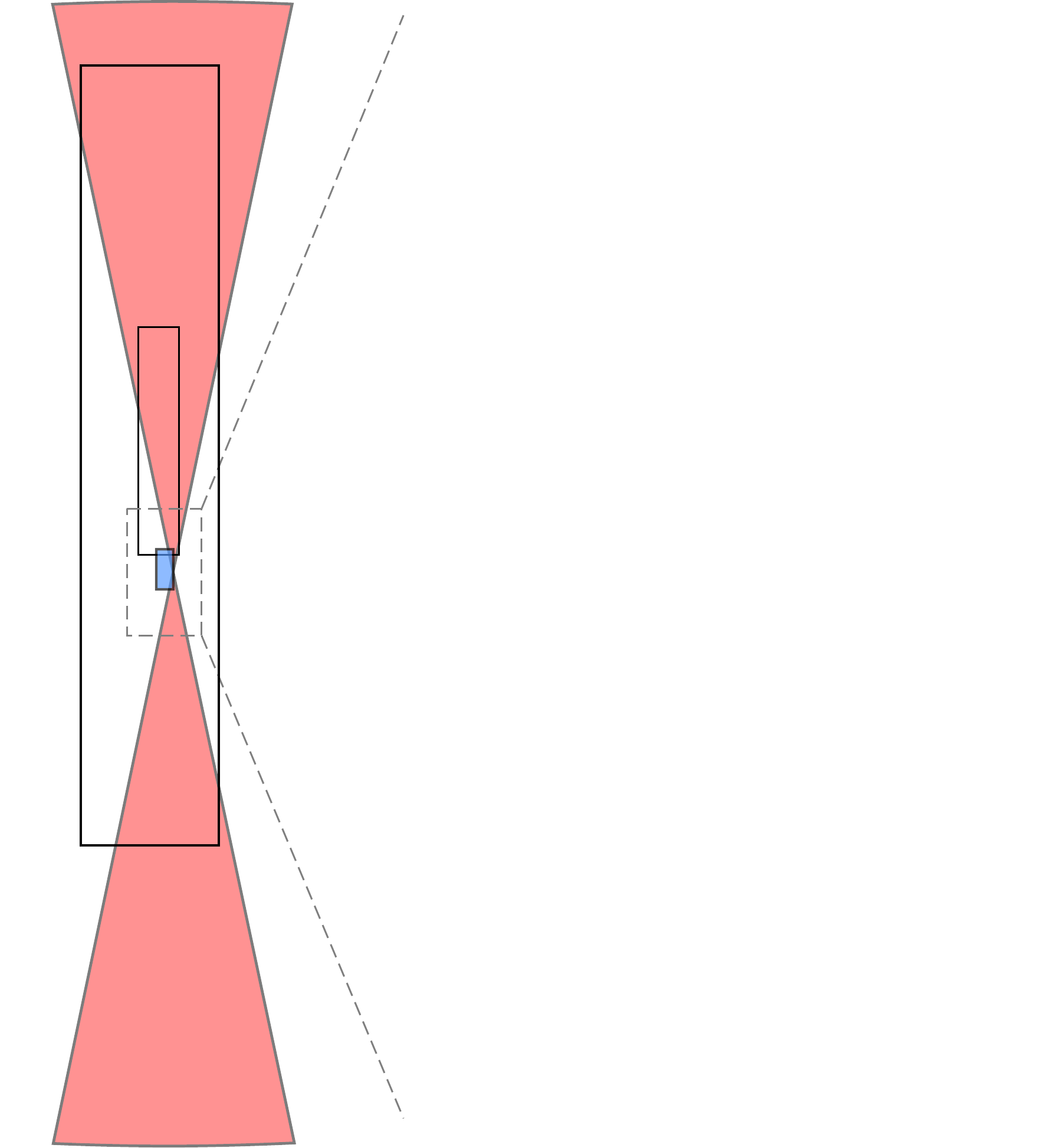}}%
    \put(0.17168873,0.54192865){\color[rgb]{0,0,0}\makebox(0,0)[lt]{\lineheight{1.25}\smash{\begin{tabular}[t]{l}$z_i$\end{tabular}}}}%
    \put(0,0){\includegraphics[width=\unitlength,page=2]{rectangleA.pdf}}%
    \put(0.08458017,0.63125741){\color[rgb]{0,0,0}\makebox(0,0)[lt]{\lineheight{1.25}\smash{\begin{tabular}[t]{l}$\cR_Q$\end{tabular}}}}%
    \put(-0.00116467,0.35474589){\color[rgb]{0,0,0}\makebox(0,0)[lt]{\lineheight{1.25}\smash{\begin{tabular}[t]{l}$A\cR_R$\end{tabular}}}}%
    \put(0,0){\includegraphics[width=\unitlength,page=3]{rectangleA.pdf}}%
    \put(0.78894802,0.55202081){\color[rgb]{0,0,0}\makebox(0,0)[lt]{\lineheight{1.25}\smash{\begin{tabular}[t]{l}$z_i$\end{tabular}}}}%
    \put(0,0){\includegraphics[width=\unitlength,page=4]{rectangleA.pdf}}%
  \end{picture}%
\endgroup%

			\caption{On the left we see the full picture, on the right we zoom in on the dashed-border rectangle. The small white rectangle is $\cR_Q$, the large white rectangle is $A\cR_R$, the blue rectangle is $\cB$, the narrow yellow rectangle is $\cA$, the red double-truncated cone is $X(z_i,0.5J,A^{-1}\ELL(Q), A^2\ELL(R))$.}\label{fig:rectangleA}
		\end{figure}
		Consider the rectangle $\mathcal{Y}$ defined as
		\begin{equation*}
		\cA\coloneqq \{z\in\R^2\,:\, \pi_0(z_i) - A^{-1}\ell(Q)\le \pi_0(z)\le\pi_0(z_i),\ |\pi_0^\perp(z)-\pi_0^\perp(z_i)|\le 2A\ELL(R) \},
		\end{equation*}
		see the yellow rectangle in \figref{fig:rectangleA}. Note that $\ell(\cA)=A^{-1}\,\ell(Q)$, $\ELL(\cA)=4A\ELL(R)$, and the mid-point of its right edge is $z_i$.
		
		Our plan is the following. First, we will show that $\cA$ is contained in the union of the good cone $X(z_i,0.5J,A^{-1}\ELL(Q), A^2\ELL(R))$ (the red cone in the figure) and the good rectangle $\cB$ (the blue rectangle in the figure). Since the interiors of these two have empty intersections with $E$, we will conclude that $\text{int}(\cA)\cap E=\varnothing$. This will give us $K\in \cK(R,\ell(Q))$ with $\pi_0(\cR_Q)\subset A^3K$, the desired gap in $\pi_{0}(A\cR_R\cap E)$.
		\begin{lemma}\label{lem:AinGX}
			If $A=A(C_0,M)$ is chosen big enough, then
			\begin{equation}\label{eq:AinGX}
			\mathrm{int}{(\cA)}\subset 	\mathrm{int}(\cB)\cup X(z_i,0.5J,A^{-1}\ELL(Q), A^2\ELL(R)).
			\end{equation}
		\end{lemma}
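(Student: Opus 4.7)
The plan is to split $\mathrm{int}(\cA)$ according to the vertical distance from $z_i$. Fix a threshold $h\coloneqq \ELL(\cG)/(2(2N+1))$, and for a point $z\in\mathrm{int}(\cA)$ consider the two cases $|\pi_0^\perp(z)-\pi_0^\perp(z_i)|<h$ (\emph{vertically close}) and $|\pi_0^\perp(z)-\pi_0^\perp(z_i)|\ge h$ (\emph{vertically far}). In the first case I will show $z\in\mathrm{int}(\cB)$; in the second, $z$ lies in the truncated cone $X(z_i,0.5J,A^{-1}\ELL(Q),A^2\ELL(R))$.

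For the vertically close case, recall that $\cB$ is the part of $\cG_{i-1}\cup\cG_i\cup\cG_{i+1}$ lying left of the vertical line through $z_i$. Since $z_i\in\cG_i$ and each $\cG_j$ has vertical extent $\ELL(\cG)/(2N+1)$, the vertical slack above and below $z_i$ inside $\cG_{i-1}\cup\cG_i\cup\cG_{i+1}$ is at least $\ELL(\cG)/(2N+1)=2h$. Hence $\pi_0^\perp(z)$ lies in the strict vertical interior of $\cB$. Horizontally, $\pi_0(z)<\pi_0(z_i)$ comes directly from $z\in\mathrm{int}(\cA)$, and $\pi_0(z)>\pi_0(z_i)-A^{-1}\ell(Q)>\pi_0(x)$ follows once $A^{-1}\ell(Q)<\ell(\cB)\sim\ell(Q)$, which is guaranteed by Lemma~\ref{lem:tildePiwide} provided $A$ is large enough.

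For the vertically far case, I use the algebraic characterization \eqref{eq:cone algebraic}. Since $0.5J$ is centered at $1/4$ with half-width $\HH(J)/4$, membership $z\in X(z_i,0.5J)$ is equivalent to $|\pi_0(z)-\pi_0(z_i)|\le \sin(\pi\HH(J)/2)\,|z-z_i|$. The left side is bounded by $A^{-1}\ell(Q)=A^{-1}\HH(J)\ELL(Q)$, while $\sin(\pi\HH(J)/2)\ge\HH(J)$ together with $|z-z_i|\ge h\sim\ELL(Q)/N$ yields a lower bound on the right side of order $\HH(J)\ELL(Q)/N\sim \ell(Q)/N$. Hence the cone inclusion holds whenever $A\gtrsim N$. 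The inner truncation $|z-z_i|\ge A^{-1}\ELL(Q)$ follows from $|z-z_i|\ge h\sim\ELL(Q)/N$ under the same constraint, and the outer truncation is immediate from the diagonal bound $|z-z_i|\le \sqrt{(A^{-1}\ell(Q))^2+(2A\ELL(R))^2}\lesssim A\ELL(R)\le A^2\ELL(R)$.

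The main technical point is that every step forces $A\gtrsim N$, and since $N\sim MC_0$ by \eqref{eq:Ndef}, this is precisely what allows one to fix $A=A(C_0,M)$ sufficiently large, as the lemma requires. No new geometric idea is needed beyond this careful case split and the sizing estimates already established in Lemma~\ref{lem:tildePiwide} and the construction of the rectangles $\cG_i$.
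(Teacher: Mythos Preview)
Your proof is correct and follows essentially the same approach as the paper: split $\mathrm{int}(\cA)$ by the vertical distance $|\pi_0^\perp(z)-\pi_0^\perp(z_i)|$, show the close part lies in $\mathrm{int}(\cB)$ using Lemma~\ref{lem:tildePiwide} for the horizontal bound, and show the far part lies in the truncated cone using $A\gtrsim N\sim MC_0$. The only cosmetic difference is your threshold $h=\ELL(\cG_i)/2$ versus the paper's $\ELL(\cG_i)$; both work for the same reasons.
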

		\begin{proof}
			This is easy to believe in after looking at \figref{fig:rectangleA} for a minute or two, but for the sake of completeness, we provide the computations below. They are easier to follow keeping \figref{fig:rectangleA} in mind.
			
			Let
			\begin{align*}
			\cA_1 &\coloneqq \{z\in\R^2\,:\, \pi_0(z_i) - A^{-1}\ell(Q)< \pi_0(z)<\pi_0(z_i),\ |\pi_0^\perp(z)-\pi_0^\perp(z_i)|< \ELL(\cG_i) \},\\
			\cA_2 &\coloneqq \mathrm{int}(\cA)\setminus \cA_1,
			\end{align*}
			so that $\mathrm{int}(\cA) = \cA_1\cup\cA_2$.
			We claim that
			\begin{equation}\label{eq:AinG}
			\cA_1\subset\mathrm{int}(\cB),
			\end{equation}
			and 
			\begin{equation}\label{eq:AinX}
			\cA_2\subset X(z_i,0.5J,A^{-1}\ELL(Q), A^2\ELL(R)).
			\end{equation}
			
			First we prove \eqref{eq:AinG}. By \lemref{lem:tildePiwide}, we have $\ell(\cA_1)=A^{-1}\ell(Q)\le \ell(\cB)$, assuming $A$ big enough. Since $z_1$ lies on the right edges of both $\cA_1$ and $\cB$, this immediately gives $\pi_0(\cA_1)\subset \pi_0(\mathrm{int}(\cB))$. On the other hand, recall that $z_i\in\cG_i$ and
			\begin{equation*}
			\pi_{0}^\perp(\cB) = \pi_{0}^\perp(\cG_{i-1})\cup \pi_{0}^\perp(\cG_i)\cup\pi_{0}^\perp(\cG_{i+1}),
			\end{equation*}
			see \figref{fig:rectangletildePi}. It follows that
			\begin{equation*}
			\pi_{0}^\perp(\cA_1) = (\pi_0^\perp(z_i) - \ELL(\cG_i),\ \pi_0^\perp(z_i) + \ELL(\cG_i) )\subset \pi_{0}^\perp(\mathrm{int}(\cB)).
			\end{equation*}
			Since both $\cA_1$ and $\mathrm{int}(\cB)$ are open rectangles with sides parallel to the axes, we conclude that $\cA_1\subset\mathrm{int}(\cB)$.
			
			We move on to \eqref{eq:AinX}. First, observe that for $z\in\cA_2$ we have, by the definition of $\cA$,
			\begin{equation*}
			|z-z_i| \le (A^{-2}\ell(Q)^2 + 4A^2\ELL(R)^2)^{1/2} \le 3A\ELL(R),
			\end{equation*}
			and also, since $z\notin\cA_1$,
			\begin{equation*}
			|z-z_i| \ge  |\pi_0^\perp(z)-\pi_0^\perp(z_i)| \ge \ELL(\cG_i) = \frac{\ELL(\cG)}{2N+1}\overset{\eqref{eq:ELLPi}}{\sim} \frac{\ELL(Q)}{N}\overset{\eqref{eq:Ndef}}{\sim}\frac{\ELL(Q)}{MC_0}
			\end{equation*}
			Thus, assuming $A=A(M,C_0)$ large enough, we have
			\begin{equation*}
			z\in B(z_i, A^2\ELL(R))\setminus B(z_i, A^{-1}\ELL(Q)).
			\end{equation*}
			It remains to show $z\in X(z_i, 0.5J)$. Note that
			\begin{multline*}
			|\pi_{0}(z)-\pi_{0}(z_i)|\le A^{-1}\ell(Q) = A^{-1}\HH(J)\ELL(Q) \\
			= MC_0A^{-1}\HH(J)\frac{\ELL(Q)}{MC_0}\lesssim  MC_0A^{-1}\HH(J)\,|\pi_0^\perp(z)-\pi_0^\perp(z_i)|.
			\end{multline*}
			Assuming $A=A(M,C_0)$ large enough, this gives $z\in X(z_i, 0.5J)$.
		\end{proof}
		 \begin{lemma}\label{lem:AE2empty}
		 	We have $\mathrm{int}(\cA)\cap E=\varnothing$.
		 \end{lemma}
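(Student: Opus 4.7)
The plan is to combine the two pieces of information already collected about $\mathrm{int}(\cA)$: the inclusion from \lemref{lem:AinGX} expresses $\mathrm{int}(\cA)$ as a subset of $\mathrm{int}(\cB)\cup X(z_i,0.5J,A^{-1}\ELL(Q),A^2\ELL(R))$, and it suffices to show that each of these two pieces is disjoint from $E$. The first piece is immediate: by \eqref{eq:tildePinoE2}, which was deduced from the fact that $\cG_i$ is a leftist rectangle and that $z_i$ is the leftmost point of $(\cG_{i-1}\cup\cG_i\cup\cG_{i+1})\cap E$, we already have $\mathrm{int}(\cB)\cap E=\varnothing$.

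For the second piece, the key is to invoke the empty-cone hypothesis \eqref{eq:empty cones} at the base point $z=z_i$. Since $z_i\in\cG_i\cap E$ by Definition \ref{def:leftmost}, membership in $E$ is automatic; what has to be checked is that $z_i\in A\cR_Q$. This is where I expect the only real bookkeeping. Recall that $x\in Q\subset\cR_Q$, $\pi_0(y)-\pi_0(x)\sim\ell(Q)$, and $\pi_0^\perp(x)-\pi_0^\perp(y)\sim\ELL(Q)$, so the enclosing rectangle $\cG$ satisfies $\ell(\cG)\sim\ell(Q)$ and $\ELL(\cG)\sim\ELL(Q)=\HH(J)^{-1}\ell(Q)$ with one edge anchored at $y\in B(x,\ELL(Q))$. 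A direct two-line computation of projections gives $\cG\subset C\cR_Q$ for some absolute constant $C$, hence $z_i\in\cG_i\subset\cG\subset A\cR_Q$ provided $A\ge C$ (which is compatible with the earlier choice $A=A(C_0,M)$ large).

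Having verified $z_i\in A\cR_Q\cap E$, assumption \eqref{eq:empty cones} applied with $z=z_i$ yields
\begin{equation*}
X(z_i,0.5J,A^{-1}\ELL(Q),A^2\ELL(R))\cap E=\varnothing.
\end{equation*}
Combining this with $\mathrm{int}(\cB)\cap E=\varnothing$ and the inclusion \eqref{eq:AinGX}, we conclude $\mathrm{int}(\cA)\cap E=\varnothing$, which is the desired statement. The main (and essentially only) obstacle is the geometric verification $\cG\subset A\cR_Q$, and this is routine given the size estimates for $\cG$ established above.
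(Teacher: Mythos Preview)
Your proposal is correct and follows essentially the same approach as the paper: use the inclusion \eqref{eq:AinGX}, invoke \eqref{eq:tildePinoE2} for $\mathrm{int}(\cB)$, and apply the empty-cone assumption \eqref{eq:empty cones} at $z=z_i$ after checking $z_i\in A\cR_Q\cap E$ via $z_i\in\cG\subset A\cR_Q$. The only difference is that you spell out the verification of $\cG\subset A\cR_Q$ in slightly more detail, whereas the paper simply recalls it.
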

		 \begin{proof}
		 	Recall that $z_i\in\cG\cap E$, and $\cG \subset A\cR_Q$. Thus, $z_i\in A\cR_Q\cap E$, and so we get from \eqref{eq:empty cones} that 
		 	\begin{equation*}
		 	X(z_i,0.5J,A^{-1}\ELL(Q), A^2\ELL(R))\cap E=\varnothing.
		 	\end{equation*}
		 	We also have $\text{int}(\cB)\cap E=\varnothing$ by \eqref{eq:tildePinoE2}. Hence, it follows from \eqref{eq:AinGX} that
		 	\begin{equation*}
		 	\text{int}(\cA)\cap E = \varnothing.
		 	\end{equation*}
		 \end{proof}
	
		\subsection{Mind the gap} We are finally ready to find the gap $K\in\cK(R,\ell(Q))$ with $\pi_0(\cR_Q)\subset A^3K$.
		
		First, note that $z_i\in A\cR_Q\subset A\cR_R$. Since $\ELL(\cA) = 4A\ELL(R)$ and $z_i$ is the mid-point of the right edge of $\cA$, it follows that
		\begin{equation*}
		\{z\in A\cR_R\, :\,  \pi_0(z)\in\pi_{0}(\text{int}(\cA))\}\subset \text{int}(\cA)
		\end{equation*}
		Together with \lemref{lem:AE2empty}, this gives
		\begin{equation*}
		\{z\in A\cR_R\cap E\, :\,  \pi_0(z)\in\pi_{0}(\text{int}(\cA))\}\subset \text{int}(\cA)\cap E = \varnothing.
		\end{equation*}
		Hence,
		\begin{equation*}
		\pi_0(A\cR_R\cap E)\cap \pi_0(\text{int}(\cA))=\varnothing.
		\end{equation*}
		This means that the open interval $\pi_0(\text{int}(\cA)) = (\pi_{0}(z_i) - A^{-1}\ell(Q),\, \pi_{0}(z_i))$ is contained in some gap $K\in\cK(R)$. We have
		\begin{equation*}
		\HH(K)\ge \HH(\pi_0(\text{int}(\cA))) = A^{-1}\ell(Q).
		\end{equation*} 
		
		Note that $x,z_i\in A\cR_R\cap E$. Thus, $\pi_0(x),\pi_{0}(z_i)\notin K$, and also $\pi_{0}(z_i)$ lies on the right end-point of $K$. By \lemref{lem:tildePiwide}
		\begin{equation*}
		\pi_0(z_i)-\pi_0(x) = \ell(\cB) > A^{-1}\ell(Q) = \HH(\pi_0(\text{int}(\cA))),
		\end{equation*}
		so that
		\begin{equation*}
		\pi_{0}(x)\le 	\pi_0(z_i)-\HH(\pi_0(\text{int}(\cA))).
		\end{equation*}
		This means that $\pi_{0}(x)$ lies ``to the left'' of the interval $\pi_0(\text{int}(\cA))$, and in consequence, ``to the left'' of the gap $K$.	Since $\pi_0(z_i)$ is the right end-point of $K$, it follows from \lemref{lem:tildePiwide} that
		\begin{equation*}
		\HH(K)\le|\pi_0(x)-\pi_0(z_i)|=\ell(\cB)\sim\ell(Q).
		\end{equation*}
		So we have $A^{-1}\ell(Q)\le \HH(K)\lesssim\ell(Q)$. In particular, $K\in \cK(R,\ell(Q))$. 
		
		Finally, we have
		\begin{equation*}
		\dist(\pi_0(\cR_Q),K)\le\dist(\pi_0(x),K)\le |\pi_0(x)-\pi_0(z_i)|\lesssim \ell(Q)\le A\HH(K),
		\end{equation*}
		and so $\pi_0(\cR_Q)\subset A^3K$. This finishes the proof of \lemref{lem:why-good-rectangle}, and of the key geometric lemma.
		\appendix
		\section{Proof of Corollary \ref{cor:changtolsa}}\label{sec:appendix}
		In this section we prove Corollary \ref{cor:changtolsa}, which we repeat below for reader's convenience.
			\begin{cor}
			Let $E\subset\R^2$ and $G\subset\TT$ be as in \thmref{thm:main thm}, and let $\mu = \HH|_E$. Then,
			\begin{equation*}
				\int_{\R^2} \int_0^\infty \frac{\mu(X(x,G^\perp,r))}{r}\, \frac{dr}{r}d\mu(x) \lesssim M\HH(G)\mu(E),
			\end{equation*}
			where $G^\perp=G+1/4$.
		\end{cor}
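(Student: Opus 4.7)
The plan is to first prove the result when $G$ is open by a direct application of \propref{prop:changtolsa}, and then reduce the general $\HH$-measurable case to the open case by a careful approximation.

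For the open case, note that for $\theta \in G^\perp$ we may write $\theta = \theta_0 + 1/4$ with $\theta_0 \in G$, so that $\pi_\theta^\perp = \pi_{\theta + 1/4} = \pi_{\theta_0 + 1/2} = -\pi_{\theta_0}$; hence $\|\pi_\theta^\perp\mu\|_\infty = \|\pi_{\theta_0}\mu\|_\infty \le M$ by hypothesis. Combined with $\|\pi_\theta^\perp\mu\|_1 = \mu(E)$, this gives $\|\pi_\theta^\perp\mu\|_2^2 \le M\mu(E)$ for every $\theta \in G^\perp$. When $G^\perp$ is open, \propref{prop:changtolsa} applied with $I = G^\perp$ immediately yields
\begin{equation*}
    \int_{\R^2} \int_0^\infty \frac{\mu(X(x, G^\perp, r))}{r}\, \frac{dr}{r}\, d\mu(x) \lesssim \int_{G^\perp} \|\pi_\theta^\perp\mu\|_2^2\, d\theta \lesssim M \HH(G) \mu(E).
\end{equation*}

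For general $\HH$-measurable $G$, Fubini lets me rewrite the quantity to be bounded as $\nu(G^\perp)$, where $\nu$ is the Borel measure on $\TT$ defined by
\begin{equation*}
    \nu(A) := \int_{\R^2}\int_{\R^2} \frac{\one_{\tilde\theta(x,y)\in A}}{|x-y|}\, d\mu(x)\, d\mu(y),
\end{equation*}
with $\tilde\theta(x,y) \in \TT$ denoting the direction of $y - x$. The open case rephrases as $\nu(U) \le C \int_U \|\pi_\theta^\perp\mu\|_2^2\, d\theta$ for every open $U \subset \TT$. I would then proceed via inner regularity of $\nu$, i.e., $\nu(G^\perp) = \sup\{\nu(K): K \subset G^\perp\text{ compact}\}$: for each compact $K \subset G^\perp$, outer-approximate by a decreasing family of open neighborhoods $V_m \supset K$ (for instance $V_m = \{\theta:\dist(\theta,K) < 1/m\}$) with $\HH(V_m) \downarrow \HH(K) \le \HH(G)$, apply \propref{prop:changtolsa} to each $V_m$, and pass to the limit $m \to \infty$.

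The main obstacle, which is the content of the ``long and uninspiring exercise'' alluded to by the author, is that the integrand $\|\pi_\theta^\perp\mu\|_2^2$ is controlled only on $G^\perp \supset K$ and may be infinite on $V_m \setminus G^\perp$, so that convergence of $\int_{V_m}\|\pi_\theta^\perp\mu\|_2^2\, d\theta$ to $\int_K \|\pi_\theta^\perp\mu\|_2^2\, d\theta \le M \HH(K)\mu(E)$ is not automatic. The key technical tool is the lower semi-continuity of $\theta \mapsto \|\pi_\theta^\perp\mu\|_\infty$, which can be proved via weak-$*$ compactness in $L^\infty(\R)$: any sequence $\theta_j \to \theta$ with uniformly $L^\infty$-bounded densities $f_j = d(\pi_{\theta_j}^\perp\mu)/d\HH^1$ has a weak-$*$ cluster point $f$ satisfying $\|f\|_\infty \le \liminf_j \|f_j\|_\infty$, and $f\HH^1 = \pi_\theta^\perp\mu$ follows from testing against $\phi \in C_c(\R)$ and continuity of the pushforward in $\theta$. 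Consequently $H := \{\theta : \|\pi_\theta^\perp\mu\|_\infty \le M\}$ is closed and contains $G^\perp \supset K$. Combining this closedness with inner regularity of $\HH$ and dominated convergence for $\nu$ (justified by finiteness of $\nu$ on some initial $V_{m_0}$ close to $K$ inside $H$) allows one to arrange the $V_m$ so that the contribution of $V_m \setminus H$ vanishes in the limit, yielding $\nu(K) \le CM\HH(K)\mu(E)$. Taking the supremum over compact $K \subset G^\perp$ then gives $\nu(G^\perp) \le CM\HH(G)\mu(E)$, as required.
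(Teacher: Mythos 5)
Your open-set case is correct and matches the paper, and the reduction to bounding $\nu(K)$ for compact $K\subset G^\perp$ is legitimate (inner regularity of $\nu$ holds because $\nu$ is the increasing limit of the finite Radon measures obtained by restricting the integration to $|x-y|\ge 1/n$). The lower semicontinuity of $\theta\mapsto\|\pi_\theta^\perp\mu\|_\infty$ via weak-$*$ compactness is also correct. The gap is the very last step: ``arrange the $V_m$ so that the contribution of $V_m\setminus H$ vanishes in the limit.'' Closedness of $H$ is information in the wrong direction. It does not produce any open neighbourhood of $K$ contained in $H$ (a closed set need not contain a neighbourhood of its compact subsets), and it gives no upper bound whatsoever on $\|\pi_\theta^\perp\mu\|_2$ for $\theta$ near but outside $H$ --- lower semicontinuity only says the norm cannot jump \emph{down} as you leave $H$. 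Hence $\int_{V_m\setminus H}\|\pi_\theta^\perp\mu\|_2^2\,d\theta$, and likewise $\nu(V_m\setminus H)$, may be $+\infty$ for every $m$. The appeal to dominated convergence / continuity from above for $\nu$ fails for the same reason: it requires $\nu(V_{m_0}\setminus H)<\infty$ for some $m_0$, and no such bound is available. Note that $\nu(\TT)=\iint|x-y|^{-1}\,d\mu\,d\mu=+\infty$ for \emph{every} AD-regular set $E$, so finiteness of $\nu$ on a set of directions is precisely the kind of statement being proved, never something you get for free off of $G^\perp$.

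What is genuinely needed, and what your argument is missing, is quantitative control of the projections on a full open \emph{neighbourhood} of $G^\perp$ (possibly after truncating $\mu$). The paper gets this by first invoking the qualitative Besicovitch projection theorem to conclude that $E$ is rectifiable, exhausting $E$ by finite unions $E_N$ of pieces of $C^1$ graphs, and observing that $\|\pi_\theta^\perp\mu\|_\infty\le M$ for $\theta\in G$ forces the tangent lines of these graphs to make angle at most $\pi/2-cM^{-1}$ with $\ell_\theta$; consequently $\|\pi_{\theta'}^\perp\mu_N\|_\infty\lesssim NM$ for \emph{all} $\theta'$ in a $cM^{-1}$-neighbourhood of $G^\perp$. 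Choosing open $G_k\supset G$ inside that neighbourhood with $\HH(G_k\setminus G)\to 0$ makes the error term $MN\HH(G_k\setminus G)\mu(E)$ finite and vanishing, and two applications of Fatou (in $k$, then in $N$) finish the proof. Your lower semicontinuity observation cannot substitute for this rectifiability input, so as written the proposal does not prove the corollary.
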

			\begin{proof}
			If the set $G$ is open, then we can immediately apply \propref{prop:changtolsa} to estimate
			\begin{multline}\label{eq:10}
				\int_{\R^2} \int_0^\infty \frac{\mu(X(x,G^\perp,r))}{r}\, \frac{dr}{r}d\mu(x) \lesssim \int_{G}\|\pi_\theta\mu\|_2^2\, d\theta = \int_G \int_\R |\pi_\theta \mu(x)|^2\, dx\, d\theta\\
				\overset{\eqref{eq:projbdd}}{\le}  M\int_G\int_{\R} \pi_\theta \mu(x)\, dx\, d\theta = M\HH(G) \mu(E),
			\end{multline}
			which is the desired inequality.
			
			The general case will follow from the classical Besicovitch projection theorem and approximation. Suppose that $G$ is not open. Note that the assumption \eqref{eq:projbdd} implies that $\HH(\pi_\theta(E))>0$ for all $\theta\in G$, and even $\HH(\pi_\theta(F))>0$ for all $F\subset E$ with $\HH(F)>0$. Since $\HH(G)>0$, we get from the classical Besicovitch projection theorem, \thmref{thm:BesFed}, that $E$ is rectifiable, so that
			\begin{equation*}
				E = \bigcup_{i=1}^\infty \Gamma_i\cup Z,
			\end{equation*}
			where $\Gamma_i$ is a measurable subset of a graph of a $C^1$-function, and $\HH(Z)=0$. For $N\ge 1$ set
			\begin{equation*}
				E_N \coloneqq \bigcup_{i=1}^N \Gamma_i,
			\end{equation*}
			and $\mu_N = \HH|_{E_N}$.
			
			Fix $\theta\in G$. Since $\|\pi_\theta\mu\|_\infty\le M$, we have that for each $i\in\mathbb{N}$ and $\HH$-a.e. point $x\in \Gamma_i$ the line tangent to $\Gamma_i$ at $x$ cannot be perpendicular to $\ell_\theta$, and even
			\begin{equation*}
				\measuredangle(T_{x}\Gamma_i, \ell_\theta)\le \frac{\pi}{2} - CM^{-1}
			\end{equation*}
			for some absolute constant $0<C<1$. Hence, if $|\theta'-\theta|\le c M^{-1}$ for some small absolute constant $0<c<1$, then we have
			\begin{equation*}
				\measuredangle(T_{x}\Gamma_i, \ell_{\theta'})\le \frac{\pi}{2} - C'M^{-1}.
			\end{equation*}
			It follows that if $|\theta'-\theta|\le c M^{-1}$, then for any $i\in \mathbb{N}$ we have $\|\pi_{\theta'}\HH|_{\Gamma_i}\|_\infty\lesssim M$. Thus,
			\begin{equation*}
				\|\pi_{\theta'}\mu_N\|_\infty \le \sum_{i=1}^N \|\pi_{\theta'}\HH|_{\Gamma_i}\|_\infty\lesssim NM.
			\end{equation*}
			
			By the outer regularity of Lebesgue measure, there exists a sequence of open sets $G_k\supset G$ such that
			\begin{equation*}
				\HH(G_k\setminus G)\le \frac{1}{k}.
			\end{equation*}
			Without loss of generality we may assume that each $G_k$ is contained in a $c M^{-1}$-neighbourhood of $G$, so that for all $\theta\in G$ we have $\|\pi_{\theta}\mu_N\|_\infty \le \|\pi_{\theta}\mu\|_\infty \le M$ and for all $\theta\in G_k\setminus G$ we have $\|\pi_{\theta}\mu_N\|_\infty \lesssim NM$. Then, repeating the computation from \eqref{eq:10} yields
			\begin{multline}\label{eq:11}
				\int_{\R^2} \int_0^\infty \frac{\mu_N(X(x,G_k,r))}{r}\, \frac{dr}{r}d\mu_N(x) \lesssim \int_{G_k}\|\pi_\theta\mu_N\|_2^2\, d\theta\\
				\le  M\HH(G) \mu_N(E) +  MN\HH(G_k\setminus G) \mu_N(E).
			\end{multline}
			Note that $\mu_N(X(x,G,r))\le \liminf_k \mu_N(X(x,G_k,r))$, and so by Fatou's lemma
			\begin{multline}\label{eq:12}
				\int_{\R^2} \int_0^\infty \frac{\mu_N(X(x,G,r))}{r}\, \frac{dr}{r}d\mu_N(x) \le \int_{\R^2} \int_0^\infty \liminf_{k\to\infty}\frac{\mu_N(X(x,G_k,r))}{r}\, \frac{dr}{r}d\mu_N(x)\\
				\le \liminf_{k\to\infty} \int_{\R^2}\int_0^\infty \frac{\mu_N(X(x,G_k,r))}{r}\, \frac{dr}{r}d\mu_N(x)\\
				\lesssim \liminf_{k\to\infty} \big( M\HH(G) \mu_N(E) +  MN\HH(G_k\setminus G) \mu(E)\big)\\
				=  M\HH(G) \mu_N(E)\le  M\HH(G) \mu(E).
			\end{multline}
			
			Now, fix $0<r<\infty$. We claim that
			\begin{equation*}
				f_N(r)\coloneqq\int_{\R^2}\mu_N(X(x,G,r))\, d\mu_N(x)\xrightarrow{N\to\infty}\int_{\R^2}\mu(X(x,G,r))\, d\mu(x)\eqqcolon f(r).
			\end{equation*}
			Indeed, we have
			\begin{multline*}
				|f(r)-f_N(r)|=\int_{\R^2}\mu(X(x,G,r))\, d\mu(x) - \int_{\R^2}\mu_N(X(x,G,r))\, d\mu_N(x)\\ 
				= \int_{E\setminus E_N}\mu(X(x,G,r))\, d\mu(x) - \int_{E_N}\mu_N(X(x,G,r))-\mu(X(x,G,r))\, d\mu_N(x)\\
				\le \mu(E)\cdot \mu(E\setminus E_N) + \mu(E_N)\cdot \mu(E\setminus E_N)\xrightarrow{N\to\infty} 0.
			\end{multline*}
			Hence, by Fatou's lemma and Fubini's theorem
			\begin{multline*}
				\int_{\R^2} \int_0^\infty \frac{\mu(X(x,G,r))}{r}\, \frac{dr}{r}d\mu(x) = \int_0^\infty f(r)\frac{dr}{r^2} = \int_0^\infty \liminf_{N\to\infty}f_N(r)\frac{dr}{r^2}\\
				\le \liminf_{N\to\infty} \int_0^\infty f_N(r)\frac{dr}{r^2} = \liminf_{N\to\infty}\int_{\R^2} \int_0^\infty \frac{\mu_N(X(x,G,r))}{r}\, \frac{dr}{r}d\mu_N(x)\\
				\overset{\eqref{eq:12}}{\lesssim}\liminf_{N\to\infty} M\HH(G) \mu(E) =  M\HH(G) \mu(E).
			\end{multline*}	
		\end{proof}
\newcommand{\etalchar}[1]{$^{#1}$}

\end{document}